\newtheorem{theorem}{Theorem}
\newtheorem{lemma}[theorem]{Lemma}
\newtheorem{proposition}[theorem]{Proposition}
\newtheorem{remark}[theorem]{Remark}
\newenvironment{proof}[1][Proof]{\textbf{#1.} }{\ \rule{0.5em}{0.5em}}
\renewcommand{\geq}{\geqslant}
\def\1{{\mathbf{1}}}
\def\1{{\mathbf{1}}}
\def\0.5{{\frac{1}{2}}}
\begin{document}

\title{AR(1) processes driven by second-chaos white noise: Berry-Esséen
bounds for quadratic variation and parameter estimation}
\author{\textbf{Soukaina Douissi}$^{1}$, \textbf{Khalifa Es-Sebaiy}$^{2}$,
\textbf{Fatimah Alshahrani}$^{3}$, \textbf{Frederi G. Viens}$^{4}$. \vspace{%
3mm} \\
{\normalsize {$^{1}$ Laboratory LIBMA, Faculty Semlalia, Cadi Ayyad
University, 40000 Marrakech, Morocco.}}\\
{\normalsize {Email: douissi.soukaina@gmail.com}}\\
{\normalsize {$^{2}$ Department of Mathematics, Faculty of Science, Kuwait
University, Kuwait.}}\\
{\normalsize {Email: khalifa.essebaiy@ku.edu.kw}}\\
{\normalsize{$^{3}$ Department of mathematical science, Princess Nourah bint Abdulrahman university, Riyadh.}}\\
{\normalsize {$^{3,4}$ Department of Statistics and Probability, Michigan
State University, East Lansing, MI 48824.}}\\
{\normalsize {Email: alshah10@msu.edu, viens@msu.edu}}}
\maketitle

{\ \noindent \textbf{Abstract:} In this paper, we study the asymptotic
behavior of the quadratic variation for the class of AR(1) processes driven
by white noise in the second Wiener chaos. Using tools from the analysis on
Wiener space, we give an upper bound for the total-variation speed of
convergence to the normal law, which we apply to study the estimation of the
model's mean-reversion. Simulations are performed to illustrate the
theoretical results.\vspace*{0.1in}}

{\noindent }\textbf{Key words}: {Central limit theorem; Berry-Esséen;
Malliavin calculus; parameter estimation; time series; Wiener chaos\vspace*{%
0.1in}}

\noindent \textbf{2010 Mathematics Subject Classification}: 60F05; 60H07;
62F12; 62M10 \footnotetext{%
The first author is supported by the Fulbright joint supervision program for
PhD students for the academic year 2018-2019 between Cadi Ayyad University
and Michigan State University. The fourth author is partially supported by
NSF awards DMS 1734183 and 1811779, and ONR award N00014-18-1-2192.}

\section{Introduction}

The topic of statistical inference for stochastic processes has a long
history, addressing a number of issues, though many difficult questions
remain. At the same time, a number of application fields are anxious to see
some practical progress in a selection of directions. Methodologies are
sought which are not just statistically sound, but stand a good chance of
being computationally implementable, if not nimble, to help practitioners
make data-based decisions in stochastic problems with complex time
evolutions. In this paper, which is motivated by parameter estimation within
the above context, we propose a quantitatively sharp analysis in this
direction, and we honor the scientific legacy of Prof. Larry Shepp.

Prof. Shepp is widely known for his seminal work on stochastic control,
optimal stopping, and applications in areas such as investment finance.
Often labeled as an applied probabilist, by those working in that area, he
had the merit, among many other qualities, of showing by example that
research activity in this area could benefit from an appreciation for the
mathematical aesthetics of constructing stochastic objects for their own
sake. His papers also showed that one's work is only as applied as one's
ability to calibrate a stochastic model to a realistic scenario. As obvious
as this view may seem, it is nonetheless in short supply in some current
circles, where model sophistication seems to replace all other imperatives.
Instead, we believe some of the principles guiding applied probability
research should include (i) statistical parsimony and robustness, (ii)
feature discovery, and above all, (iii) real-world impact where
mathematicians propose a real solution to a real problem. We think that
Prof. Shepp would not have been shy about agreeing that his seminal and
highly original works on optimal stopping and stochastic control \cite%
{SStopping}, \cite{SControl}, including the invention \cite{SRussian} of the
widely used Russian financial option, illustrate items (ii) and (iii) in
this philosophy perfectly. This leaves the question of how to estimate model
parameters needed to implement applied solutions. Prof. Shepp proved on many
occasions that this concern was also high on his list of objectives in
applied work; he proposed methods aligning with our stated principle (i)
above. The best example is the work for which Shepp is most widely known
outside of our stochastic circles: the mathematical foundation of the
Computational Tomography (CT) scanner, and in particular, the basis \cite%
{SCT} for its data analysis. Prof. Shepp is less well known for his direct
interest in statistically motivated stochastic modeling; the posthumous
paper \cite{EBSW} is an instance of this, on asymptotics of auto-regressive
processes with normal noise (innovations).

Our paper honors this legacy by providing a detailed and mathematically
rigorous stochastic analysis of some building blocks needed in the data
analysis of a simple class of stochastic processes. Our paper's originality
is in working out detailed quantitative properties for auto-regressive
processes with innovations in the second Wiener chaos. Our framework is
parsimonious in the sense of being determined by a small number of
parameters, while covering features of stationarity, mean-reversion, and
heavier-than-normal tail weight. We focus on establishing rates of
convergence in the central limit theorem for quadratic variations of these
processes, which we are then able to transfer to similar rates for the
model's moments-based parameter estimation. This precision would allow
practitioners to determine the validity and uncertainty quantification of
our estimates in the realistic setting of moderate sample size. Careless use
of a method of moments would ignore the potential for abusive conclusions in
this heavy-tailed time-series setting.\bigskip

The remainder of this introduction begins with an overview of the landscape
of parameter estimation for stochastic processes related to ours. The few
included references call for the reader to find additional references
therein, for the sake of conciseness. We then introduce the specific model
class used in this paper. It represents a continuation of the current
literature's motivation to calibrate stochastic models with features such as
stochastic memory and path roughness. It constitutes a departure from the
same literature's focus on the framework of Gaussian noise.

\subsection{Parameter estimation for stochastic processes: historical and
recent context}

Some of the early impetus in parameter estimation for stochastic processes
was inspired by classical ideas from frequentist statisitics, such as the
theoretical and practical superiority of maximum likelihood estimation
(MLE), over other, less constrained methodologies, in many contexts. We will
not delve into the description of many such instances, citing only the
seminal account \cite{LS}, first published in Russian in 1974 (see
references therein in Chapter 17, such as \cite{Nov71}). This was picked up
two decades later in the context of processes driven by fractional Brownian
motion, where it was shown that the martingale property used in earlier
treatments was not a necessary ingredient to establish the properties of
such MLEs: see in particular the treatment of processes with fractional
noise in \cite{KL} and in \cite{TV}. It was also noticed that least-squares
ideas, which led to MLEs in cases of white-noise driven processes, did not
share this property in the case of processes driven by fractional noise:
this was pointed out in the continuous-time based paper \cite{HN}. See also
a more detailed account of this direction of work in \cite{EEV} and
references therein, including a discussion of the distinction between
estimators based on continuous paths, and those using discrete sampling
under in-fill and increasing-horizon asymptotics. These were applied
particularly to various versions of the Ornstein-Uhlenbeck process, as
examples of processes with stationary increments and an ability to choose
other features such as path regularity and short or long memory.

The impracticality of computing MLEs for parameters of stochastic processes
in these feature-rich contexts, led the community to consider other
methodologies, looking more closely at least squares and beyond. A popular
approach is to work with incarnations of the method of moments. A full study
in the case of general stationary Gaussian sequences, with application to
in-fill asymtotics for the fractional Ornstein-Uhlenbeck process, is in \cite%
{BV}. This paper relates the relatively long history of those works where
estimation of a memory or Hölder-regularity parameter uses moments-based
objects, particularly quadratic variations. It also shows that the
generalized method of moments can, in principle, provide a number of options
to access vectors of parameters for discretely observed Gaussian processes
in a practical way. This was also illustrated recently in \cite{EV}, where
the Malliavin calculus and its connection to Stein's method was used to
establish speeds of convergence in the central-limit theorems for
quadratic-variations-based estimators for discretely observed processes. The
Stein-Malliavin technical methodology employed in \cite{EV} is that which
was introduced by Nourdin and Peccati in 2009, as described in their 2012
research monograph \cite{NP-book}.

Other estimation methods are also proposed for general stationary time
series, which we mention here, though they fall out of the scope of our
paper, and they do not lead to the same precision as those based on the
Stein-Malliavin method: see e.g. \cite{VVI} and \cite{VVI2} for the
Yule-Walker method and extensions. While the paper \cite{V} establishes that
essentially every continuous-time stationary process can be represented as
the solution of a Langevin (Ornstein-Uhlenbeck-type) equation with an
appropriate noise distribution, the two aforementioned follow-up papers \cite%
{VVI, VVI2}, which present an analog in discrete time, do not, however,
connect the discrete and continuous frameworks via any asymptotic theory.

Following an initial push in \cite{TV}, most of the recent papers mentioned
above, and recent references therein, state an explicit effort to work with
discretely observed processes. At least in the increasing-horizon case, the
papers \cite{EV} and \cite{DEV} had the merit of pointing out that many of
the discretization techniques used to pass from continuous-path to
discrete-observation based estimators, were inefficient, and it is
preferable to work directly from the statistics of the discretely observed
process. Our paper picks up this thread, and introduces a new direction of
research which, to our knowledge, has not been approached by any authors:
can the asymptotic normality of quadratic variations and related estimators,
including very precise results on speeds of convergence, be obtained when
the driving noise is not Gaussian?

The main underlying theoretical result we draw on is the optimal estimation
of total-variation distances between chaos variables and the normal law,
established in \cite{NP2015}. It was used for quadratic variations of
stationary sequences in the Gaussian case in \cite{NV2014}. But when the
Gaussian setting is abandonned, the result in \cite{NP2015} cannot be used
directly. Instead, our paper makes a theoretical advance in the analysis on
Wiener space, by drawing on a simple idea in the recent preprints \cite{NZ}
and \cite{NPY}; our main result provides an example of a sum of chaos
variables whose distance to the normal appears to be estimated optimally,
whereas a standard use of the Schwartz inequality would result in a much
weaker result. The precise location of the technique leading to this
improvement is pointed out in the main body of our paper: see Theorem \ref%
{newbound}, particularly inequality (\ref{newboundineq}) in its proof and
the following brief discussion there, and Remark \ref{KeyRem}. This allows
us to prove our Berry-Esséen-type speed of $n^{-1/2}$, rather than what
would have resulted in a speed of $n^{-1/4}.$

\subsection{A stationary process with second-chaos noise, and related
literature}

Given our intent to address the new issue of noise distribution, and knowing
that Berry-Esséen-type questions for models with mere Gaussian noise already
present technical challenges, we choose to minimize the number of technical
issues to address in this paper by focusing on the simplest possible
stationary model class which does not restrict the marginal noise
distribution within a family which is tractable using Wiener chaos analysis
and tools from the Malliavin calculus. This is the auto-regressive model of
order 1 (a.k.a. AR(1)) with independent noise terms, where the noise
distribution is in the second Wiener chaos, i.e.
\begin{equation}
Y_{n}=a_{0}+a_{1}Y_{n-1}+\varepsilon _{n}  \label{Modelintro}
\end{equation}%
where $\left\{ \varepsilon _{n};n\in \mathbf{Z}\right\} $ is an i.i.d.
sequence in the second Wiener chaos, and $a_{0}$ and $a_{1}$ are constants.
The complete description and construction of this process and of the noise
sequence is given in Section \ref{Presentation}, see (\ref{ARmodel}).

As explained in Section \ref{Prelim}, the second Wiener chaos is a linear
space, and since the model (\ref{Modelintro}) is linear, its solution, if
any, lies in the same chaos. This points to a simple theoretical motivation
and justification for studying the increasing horizon problem as opposed to
the in-fill problem. We also include a practical motivation for doing so,
further below in this section, coming from an environmental statistics
question.

For the former motivation, note that the AR(1) specification (\ref%
{Modelintro}), with essentially any square-integrable i.i.d. noise
distribution, is known to converge weakly, after appropriate aggregation and
scaling, to the so-called Ornstein-Uhlenbeck process (also known
occasionally as the Vasicek process), which solves the stochastic
differential equation
\begin{equation}
dX_{t}=\alpha (m-X_{t})dt+\sigma dW\left( t\right)   \label{OU}
\end{equation}%
where $W$ is a standard Wiener process (Gaussian Brownian motion), and the
parameters $\alpha ,m$, and $\sigma $ are explicitly related to $a_{0},a_{1}$
and $Var\left[ \varepsilon _{n}\right] $. See for instance \cite[Chapter 2]%
{T}, which covers the case of all square-integrable innovations; this paper
assumes a piecewise linear interpolation in the normalization, which could
be eliminated by switching to convergence in the Skorohod $J_{1}$ topology.
A reference avoiding linear interpolation, with convergence in the Skorohod $%
J_{1}$ topology, is \cite{CS}, where innovations are assumed to have four
moments. In any case, this central limit theorem constrains the modeling of
stationary/ergodic processes via diffusive differential formulation: under
in-fill asymptotics with weakly dependent noise, the AR(1) specification
cannot preserve any non-normal noise distribution in the limit. It is of
course possible to interpolate the above process $Y$ in a number of ways, to
result in a continuous-time process whose discrete-time marginals are those
specified via (\ref{Modelintro}).

However we believe it is difficult or impossible to give a linear
diffusion-type stochastic differential equation, akin to (\ref{OU}), whose
fixed-time-step marginals are as in (\ref{Modelintro}) for an arbitrary
noise distribution, while simultaneously describing what second-chaos
process differential would need to replace $W$ in (\ref{OU}). The so-called
Rosenblatt process (see \cite{TRosenblatt}), the only known second-chaos
continuous-time process with a stochastic calculus similar to $W$'s, gives
an example of a viable alternative to (\ref{OU}) living in the second chaos.
But this process is known to have only a long-memory version. Thus it cannot
be a proxy for any continuous-time analogue of (\ref{Modelintro}), since the
noise there has no memory. Similar issues would presumably exist for other
non-Gaussian AR(1) and related auto-regressive processes. A few have been
studied recently in the literature. We mention \cite{H, YG}, which cover
various noise structures similar to second-chaos noises, and here again, no
asymptotic or interpolation theory is provided to relate to continuous time.
There does exist a general treatment in \cite{EBSW} of asymptotics for all
AR($p$) processes: the limit processes are the so-called Continuous-AR($p$)
processes, which are Gaussian, and have $p-1$-differentiable paths (a form
of very long memory for $p>1$); that paper assumes normal innovations to
keep technicalities to a minimum.

Another indication that finding such a proxy may fail comes in the specific
case of the so-called Gumbel distribution for $\varepsilon _{n}$. This law
is a popular distribution for extreme-value modeling. The fact that this law
is in the second chaos is a classical result (as a weighted sum of
exponentials, see \cite{Sch51}), though it does not appear to be widely know
in the extreme-value community. The standard (mean-zero) Gumbel law can be
represented as $\sum_{n=1}^{\infty }j^{-1}\left( E_{j}-1\right) $ where $%
E_{j}=\left( N_{j}^{2}+\bar{N}_{j}^{2}\right) /2$ is a standard exponential
variable (chi-squared with two degrees of freedom, $N_{j}$ and $\bar{N}_{j}$
are iid standard normals). The Gumbel law is known to give rise to a
second-chaos version of an isonormal Gaussian process, known as the Gumbel
noise measure (or Gumbel process); that stochastic measure obeys the same
laws as the white-noise measure (including independence of increments which
fails for the Rosenblatt noise), if one replaces the standard algebra of the
reals by the max-plus algebra. This is explained in detail in the preprint
\cite{MTM}; also see references therein. By virtue of this change of
algebra, stochastic differential specifications as in (\ref{OU}) cannot be
defined using the Gumbel noise.

However, the discrete version of the Gumbel noise, an i.i.d. sequence $%
\left( \varepsilon _{n}\right) _{n}$ with Gumbel marginals, is a good
example of a noise type which can be used in the AR(1) process (\ref%
{Modelintro}). This specific model, known as the AR(1) process with Gumbel
noise (or innovations), presents a main motivation for our work. Recent
references on this process, and on the closely related process where the
marginals of $Y$ are Gumbel-distributed, include \cite{GumbelBayes} for a
Bayesian study, \cite{GumbelMarginal} for applications to maxima in
greenhouse gas concentration data, and \cite{ExtremeAR} for AR processs in
the broader extreme-value context. A survey on AR(1) models with different
types of innovations and marginals, while not including the Gumbel, is in
the unpublished manuscript \cite{AR1Survey}. The use of the Gumbel
distribution for describing environmental time series, mainly when looking
at extremes, is fairly widespread, but we do not cite this literature
because it does not appear willing to acknowledge that time-series models
driven by Gumbel innovations should be used, rather than using tools for
i.i.d. Gumbel data. This literature, which is easy to find, is also entirely
unaware that the Gumbel distribution is in the second Wiener chaos.

All these reasons give us ample cause to investigate the basic
method-of-moments building blocks for determining parameters in stationary
time series with second-chaos innovations. For the sake of concentrating on
the core mathematical analysis towards this end, we focus on the asymptotics
of quadratic variations for models in the class (\ref{Modelintro}). The
methodology developed in \cite{EV} can then be adapted to handle any
method-of-moments-based estimators, at the cost of some additional effort.
We provide examples of this in the latter sections of this paper. Our main
result is that, for any second-chaos innovations in (\ref{Modelintro}), the
quadratic variation of $Y$ has explicit normal asymptotics, with a speed of
convergence in total variation which matches the classical Berry-Esseén
speed of $n^{-1/2}$.

~

The remainder of this paper is structured as follows. Section \ref{Prelim}
provides elements from the analysis on Wiener space which will be used in
the paper. Section \ref{Presentation} presents the details of the class of
AR(1) models we will analyze. Section \ref{Asymp} computes the asymptotic
variance of the AR(1)'s quadratic variation by looking separately at its
2nd-chaos and 4th-chaos components, whose asymptotics are of the same order.
Section \ref{BerryEsseen} establishes our main result, the Berry-Esseén
speed of convergence in total-variation for the normal fluctuations of the
AR(1)'s quadratic variation. Finally, Section \ref{Estim} defines a
method-of-moments estimator for the mean-reversion rate of this AR(1)
process, and establishes its asymptotic properties; a numerical study is
included to gauge the distance between this renormalized estimator and the
normal law.

\section{Preliminaries}

\label{Prelim}

In this first section, we recall some elements from stochastic analysis that
we will need in the paper. See \cite{NP-book}, \cite{nualart-book}, and \cite%
{ustu} for details. Any real, separable Hilbert space ${\mathcal{H}}$ gives
rise to an isonormal Gaussian process: a centered Gaussian family $%
(G(\varphi ),\varphi \in {\mathcal{H}})$ of random variables on a
probability space $(\Omega ,\mathcal{F},\mathbf{P})$ such that $\mathbf{E}%
(G(\varphi )G(\psi ))=\langle \varphi ,\psi \rangle _{{\mathcal{H}}}$. In
this paper, it is enough to use the classical Wiener space, where $\mathcal{H%
}=L^{2}([0,1])$, though any $\mathcal{H}$ will also work. In the case ${%
\mathcal{H}}=L^{2}([0,1])$, $G$ can be identified with the stochastic
differential of a Wiener process $W$ and one interprets $G(\varphi
):=\int_{0}^{1}\varphi \left( s\right) dW\left( s\right) $.

The Wiener chaos of order~$n$ is defined as the closure in $L^{2}\left(
\Omega \right) $ of the linear span of the random variables $H_{n}(G(\varphi
))$, where $\varphi \in {\mathcal{H}},\Vert \varphi \Vert _{{\mathcal{H}}}=1$
and $H_{n}$ is the Hermite polynomial of degree~$n$. The intuitive
Riemann-sum-based notion of multiple Wiener stochastic integral $I_{n}$ with
respect to $G\equiv W$, in the sense of limits in $L^{2}\left( \Omega
\right) $, turns out to be an isometry between the Hilbert space ${\mathcal{H%
}}^{\odot n}$ (symmetric tensor product) equipped with the scaled norm $%
\frac{1}{\sqrt{n!}}\Vert \cdot \Vert _{{\mathcal{H}}^{\otimes n}}$ and the
Wiener chaos of order~$n$ under $L^{2}\left( \Omega \right) $'s norm. In any
case, we have the following fundamental decomposition of $L^{2}\left( \Omega
\right) $ as a direct sum of all Wiener chaos.

\noindent $\bullet $ \textbf{The Wiener chaos expansion.} For any $F\in
L^{2}\left( \Omega \right) $, there exists a unique sequence of functions $%
f_{n}\in {\mathcal{H}}^{\odot n}$ such that
\begin{equation*}
F=\mathbf{E}[F]+\sum_{n=1}^{\infty }I_{n}(f_{n}),
\end{equation*}%
where the terms are all mutually orthogonal in $L^{2}\left( \Omega \right) $
and
\begin{equation}
\mathbf{E}\left[ I_{n}(f_{n})^{2}\right] =n!\Vert f_{n}\Vert _{{\mathcal{H}}%
^{\otimes n}}^{2}.  \label{Isometryprop}
\end{equation}

\noindent $\bullet $ \textbf{Product formula and contractions.} Since $%
L^{2}\left( \Omega \right) $ is closed under multiplication, the special
case of the above expansion exists for calculating products of Wiener
integrals, and is explicit using contractions: for any $p$, $q$, and
symmetric integrands $f\in \mathcal{H}^{\odot p}$ and $g\in \mathcal{H}%
^{\odot q}$,
\begin{equation}
I_{p}(f)I_{q}(g)=\sum_{r=0}^{p\wedge q}r!{C}_{p}^{r}{C}_{q}^{r}I_{p+q-2r}(f%
\otimes _{r}g);  \label{product}
\end{equation}%
see \cite[Proposition 1.1.3]{nualart-book} for instance; the contraction $%
f\otimes _{r}g$ is the element of ${\mathcal{H}}^{\otimes (p+q-2r)}$ defined
by
\begin{eqnarray*}
&&(f\otimes _{r}g)(s_{1},\ldots ,s_{p-r},t_{1},\ldots ,t_{q-r}) \\
&&\qquad :=\int_{[0,1]^{p+q-2r}}f(s_{1},\ldots ,s_{p-r},u_{1},\ldots
,u_{r})g(t_{1},\ldots ,t_{q-r},u_{1},\ldots ,u_{r})\,du_{1}\cdots du_{r}.
\end{eqnarray*}%
The special case for $p=q=1$ is particularly handy, and can be written in
its symmetrized form:%
\begin{equation*}
I_{1}(f)I_{1}(g)=2^{-1}I_{2}\left( f\otimes g+g\otimes f\right) +\langle
f,g\rangle _{{\mathcal{H}}}.
\end{equation*}

\noindent $\bullet $ \textbf{Hypercontractivity in Wiener chaos}. For $h\in {%
\mathcal{H}}^{\otimes q}$, the multiple Wiener integrals $I_{q}(h)$, which
exhaust the set ${\mathcal{H}}_{q}$, satisfy a hypercontractivity property
(equivalence in ${\mathcal{H}}_{q}$ of all $L^{p}$ norms for all $p\geq 2$),
which implies that for any $F\in \oplus _{l=1}^{q}{\mathcal{H}}_{l}$ (i.e.
in a fixed sum of Wiener chaoses), we have
\begin{equation}
\left( E\big[|F|^{p}\big]\right) ^{1/p}\leqslant c_{p,q}\left( E\big[|F|^{2}%
\big]\right) ^{1/2}\ \mbox{ for any }p\geq 2.  \label{hypercontractivity}
\end{equation}%
It should be noted that the constants $c_{p,q}$ above are known with some
precision when $F\in {\mathcal{H}}_{q}$: by Corollary 2.8.14 in \cite%
{NP-book}, $c_{p,q}=\left( p-1\right) ^{q/2}$.

\noindent $\bullet $ \textbf{Malliavin derivative and other operators on
Wiener space}. The Malliavin derivative operator $D$, and other operators on
Wiener space, are needed briefly in this paper, to provide an efficient
proof of the first theorem in Section \ref{BerryEsseen}, and to interpret an
observation of I. Nourdin and G. Peccati, given below in (\ref{NPobschaos}),
for a bound on the total variation distance of any chaos law to the normal
law. We do not provide any background on these operators, referring instead
to Chapter 2 in \cite{NP-book}, and briefly mentioning here the facts we
will use in the proof of Section \ref{BerryEsseen}, without spelling out all
assumptions. Strictly speaking, all the results in this paper can be
obtained without the following facts, but this would be exceedingly tedious
and wholly nontransparent.

\begin{itemize}
\item The operator $D$ maps $I_{q}\left( f\right) $ to $t\mapsto
qI_{q-1}\left( f\left( .,t\right) \right) $ and is consistent with the
ordinary chain rule. Its domain is denoted by $\mathbf{D}^{1,2},$and
includes all chaos variables.

\item The operator $L$, known as the generator of the Orstein-Uhlenbeck
semigroup on Wiener space, maps $I_{q}\left( f\right) $ to $-qI_{q}\left(
f\right) $, and $L^{-1}$ denotes its pseudo-inverse: $L$'s kernel is the
constants, all other chaos are its eigenspaces. Combining this with the
previous point, we obtain $-D_{t}L^{-1}I_{q}\left( f\right) =I_{q-1}\left(
f\left( .,t\right) \right) $.

\item This $D$ has an adjoint $\delta $ in $L^{2}\left( \Omega \right) $,
which by definition satisfies the duality relation $\mathbf{E}\left\langle
DF,u\right\rangle _{\mathcal{H}}=\mathbf{E}\left[ F\delta \left( u\right) %
\right] $, where $u$ is any stochastic process for which the expressions are
defined. The domain of $\delta $ is a non-trivial object of study, but it is
known to contain all square-integrable $W$-adapted processes for the case of
$G=W$, the wiener process, where $\mathcal{H}=L^{2}\left( [0,1]\right) $.

\item We have the relation%
\begin{equation*}
F=\delta (-DL^{-1})F.
\end{equation*}
\end{itemize}

\noindent $\bullet $ \textbf{Distances between random variables}. The
following is classical. If $X,Y$ are two real-valued random variables, then
the total variation distance between the law of $X$ and the law of $Y$ is
given by
\begin{equation*}
d_{TV}\left( X,Y\right) :=\sup_{A\in \mathcal{B}({\mathbb{R}})}\left\vert P%
\left[ X\in A\right] -P\left[ Y\in A\right] \right\vert
\end{equation*}%
where the supremum is over all Borel sets. The Kolmogorov distance $%
d_{Kol}\left( X,Y\right) $ is the same as $d_{TV}$ except one take the sup
over $A$ of the form $(-\infty ,z]$ for all real $z$. The Wasserstein
distance uses Lipschitz rather than indicator functions:%
\begin{equation*}
d_{W}\left( X,Y\right) :=\sup_{f\in Lip(1)}\left\vert Ef(X)-Ef(Y)\right\vert
,
\end{equation*}%
$Lip(1)$ being the set of all Lipschitz functions with Lipschitz constant $%
\leqslant 1$.

\noindent $\bullet $ \textbf{The observation of Nourdin and Peccati}. Let $N$
denote the standard normal law. The following observation relates an
integration-by-parts formula on Wiener space with a classical result of Ch.
Stein.

\begin{description}
\item Let $X\in \mathbf{D}^{1,2}$ with $\mathbf{E}\left[ X\right] =0$ and $%
Var\left[ X\right] =1$. Then (see \cite[Proposition 2.4]{NP2015}, or \cite[%
Theorem 5.1.3]{NP-book}), for $f\in C_{b}^{1}\left( \mathbf{R}\right) $,
\begin{equation*}
E\left[ Xf\left( X\right) \right] =E\left[ f^{\prime }\left( X\right)
\left\langle DX,-DL^{-1}X\right\rangle _{\mathcal{H}}\right]
\end{equation*}%
and by combining this with properties of solutions of Stein's equations, one
gets
\begin{equation}
d_{TV}\left( X,N\right) \leqslant 2E\left\vert 1-\left\langle
DX,-DL^{-1}X\right\rangle _{\mathcal{H}}\right\vert .  \label{NPobs}
\end{equation}%
One notes in particular that when $X\in {\mathcal{H}}_{q}$, since $%
-L^{-1}X=q^{-1}X$, we obtain conveniently
\begin{equation}
d_{TV}\left( X,N\right) \leqslant 2E\left\vert 1-q^{-1}\left\Vert
DX\right\Vert _{\mathcal{H}}^{2}\right\vert .  \label{NPobschaos}
\end{equation}
\end{description}

\noindent $\bullet$ \textbf{A convenient lemma\label{CLemma}}. The following
result is a direct consequence of the Borel-Cantelli Lemma (the proof is
elementary; see e.g. \cite{KN}). It is convenient for establishing
almost-sure convergences from $L^{p}$ convergences.

\begin{lemma}
\label{Borel-Cantelli} Let $\gamma >0$. Let $(Z_{n})_{n\in \mathbb{N}}$ be a
sequence of random variables. If for every $p\geq 1$ there exists a constant
$c_{p}>0$ such that for all $n\in \mathbb{N}$,
\begin{equation*}
\Vert Z_{n}\Vert _{L^{p}(\Omega )}\leqslant c_{p}\cdot n^{-\gamma },
\end{equation*}
then for all $\varepsilon >0$ there exists a random variable $\eta
_{\varepsilon }$ which is almost such that
\begin{equation*}
|Z_{n}|\leqslant \eta _{\varepsilon }\cdot n^{-\gamma +\varepsilon }\quad %
\mbox{almost surely}
\end{equation*}
for all $n\in \mathbb{N}$. Moreover, $E|\eta _{\varepsilon }|^{p}<\infty $
for all $p\geq 1$.
\end{lemma}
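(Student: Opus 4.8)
The plan is to combine a Markov-inequality tail estimate with the first Borel--Cantelli lemma, and then repackage the resulting almost-sure bound into a single random variable $\eta_\varepsilon$ defined as a supremum. Fix $\varepsilon>0$. First I would apply Markov's inequality at exponent $p$ to the hypothesis: for any $p\geq 1$,
\begin{equation*}
P\left[|Z_n|>n^{-\gamma+\varepsilon}\right]=P\left[|Z_n|^p>n^{p(-\gamma+\varepsilon)}\right]\leqslant n^{p(\gamma-\varepsilon)}\,E|Z_n|^p\leqslant c_p^p\,n^{-p\varepsilon}.
\end{equation*}
Choosing $p$ large enough that $p\varepsilon>1$ makes the right-hand side summable in $n$, so by Borel--Cantelli the event $\{|Z_n|>n^{-\gamma+\varepsilon}\}$ occurs for only finitely many $n$, almost surely.

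Next I would set $\eta_\varepsilon:=\sup_{n\geq 1}|Z_n|\,n^{\gamma-\varepsilon}$, which is measurable as a countable supremum. By construction $|Z_n|\leqslant\eta_\varepsilon\,n^{-\gamma+\varepsilon}$ for every $n$, and the Borel--Cantelli conclusion shows that, outside a null set, the supremum is the maximum over a finite set of indices, so $\eta_\varepsilon<\infty$ almost surely. For the moment bound I would use the crude domination $\eta_\varepsilon^p\leqslant\sum_{n\geq 1}|Z_n|^p\,n^{p(\gamma-\varepsilon)}$; taking expectations and invoking the hypothesis gives $E\,\eta_\varepsilon^p\leqslant c_p^p\sum_{n\geq 1}n^{-p\varepsilon}$, which is finite whenever $p\varepsilon>1$. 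Finally, since $(\Omega,\mathcal{F},\mathbf{P})$ is a probability space, finiteness of the $p$-th moment for all large $p$ forces finiteness of the $q$-th moment for every $q\geq 1$ by monotonicity of $L^q(\Omega)$-norms (equivalently, Jensen's inequality). This delivers all the asserted properties of $\eta_\varepsilon$.

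There is essentially no serious obstacle here; this is a routine Borel--Cantelli argument. The only point requiring a little care is to choose $\eta_\varepsilon$ independently of $p$ — hence the supremum, rather than a fixed-$p$ series — so that one and the same $\eta_\varepsilon$ simultaneously controls $|Z_n|$ for all $n$ and has finite moments of every order. One should also note explicitly that it is the summability of the tail probabilities, and not mere convergence in probability, that is needed to guarantee $\eta_\varepsilon<\infty$ almost surely.
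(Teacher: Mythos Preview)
Your argument is correct and follows exactly the route the paper indicates: the paper does not actually give a proof but states that the lemma is ``a direct consequence of the Borel--Cantelli Lemma (the proof is elementary; see e.g.\ \cite{KN}),'' and your Markov-inequality-plus-Borel--Cantelli derivation with $\eta_\varepsilon:=\sup_n |Z_n|\,n^{\gamma-\varepsilon}$ is precisely the standard elementary proof one finds in that reference. There is nothing to add.
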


\section{The model\label{Presentation}}

\subsection{Definition}

We consider the following AR(1) model
\begin{equation}
\left\{
\begin{array}{ll}
Y_{n}=a_{0}+a_{1}Y_{n-1}+\varepsilon _{n}, & n\geq 1 \\
\varepsilon _{n}=\sum\limits_{\delta =1}^{\infty }\sigma _{\delta
}(Z_{n,\delta }^{2}-1) &  \\
Y_{0}=y_{0}\in \mathbb{R}. &
\end{array}%
\right.  \label{ARmodel}
\end{equation}%
where $a_{0}$, $a_{1}$ and $\left\{ \sigma _{\delta },\delta \geq 1\right\} $
are real constants. The sequence of innovations $\left\{ \varepsilon
_{n},n\geq 1\right\} $ is i.i.d., with distribution in the second Wiener
chaos. It turns out that this sequence can be represented as in the second
line above in (\ref{ARmodel}), where the family $\left\{ Z_{n,\delta },n\geq
1,\delta \geq 1\right\} $ are i.i.d. standard Gaussian random variables
defined on $(\Omega ,\mathcal{F},\mathbf{P})$, and $\left\{ \sigma _{\delta
};\delta \geq 1\right\} $ is a sequence of reals satisfying
\begin{equation}
\sum\limits_{\delta =1}^{\infty }\sigma _{\delta }^{2}<\infty .
\label{assumption}
\end{equation}%
This is explained in \cite[Section 2.7.4]{NP-book}. We assume that the mean
reversion parameter $a_{1}$ is such that $\left\vert a_{1}\right\vert <1$.
Under this condition, (\ref{ARmodel}) also admits a stationary ergodic
solution. Both the version above and the stationary version are linear
functionals of elements of the form of $\varepsilon _{n}$, which are
elements of the second Wiener chaos. Since this chaos is a vector space,
both versions of $Y$ take values in the second Wiener chaos.

By truncating the series in (\ref{ARmodel}), one obtains a process which is
a sum of chi-squared variables, converging to $Y$ in $L^{2}(\Omega )$.
Special cases where the sum is finite, can be considered. In the figures
below, we simulate 500 observations from such cases, to show the variety of
behaviors, even with a limited number of terms in the noise series.

\begin{itemize}
\item[\textbullet ] When $\sigma _{1}=\sigma $ and $\sigma _{\delta }=0$,
for all $\delta \geq 2$, corresponds to a scaled mean-zero chi-squared white
noise with one degree of freedom: $(Z_{n,1}^{2}-1)\sim \chi ^{2}(1)$.

\item[\textbullet ] When $\sigma _{1}=\sigma _{2}=\sigma $ and $\sigma
_{\delta }=0$, $\forall \delta \geq 3$, an exponential white noise with rate
parameter $1/(2\sigma )$. Indeed $(Z_{n,1}^{2}-1)+(Z_{n,2}^{2}-1)\sim
\mathcal{E}(1/2)$.

\item[\textbullet ] When $\sigma _{1}=-\sigma _{2}$, and $\sigma _{\delta
}=0 $, for all $\delta \geq 2$, which is a symmetric second chaos white
noise, $\varepsilon $'s law is equal to a product normal law: if $%
N,N^{\prime }$ are two i.i.d. standard normals, then $2NN^{\prime }\sim
(Z_{n,1}^{2}-1)-(Z_{n,2}^{2}-1)=Z_{n,1}^{2}-Z_{n,2}^{2}$.
\end{itemize}

\begin{figure}[]
\begin{subfigure}[b]{0.5\linewidth}
    \centering
    \includegraphics[width=1\linewidth]{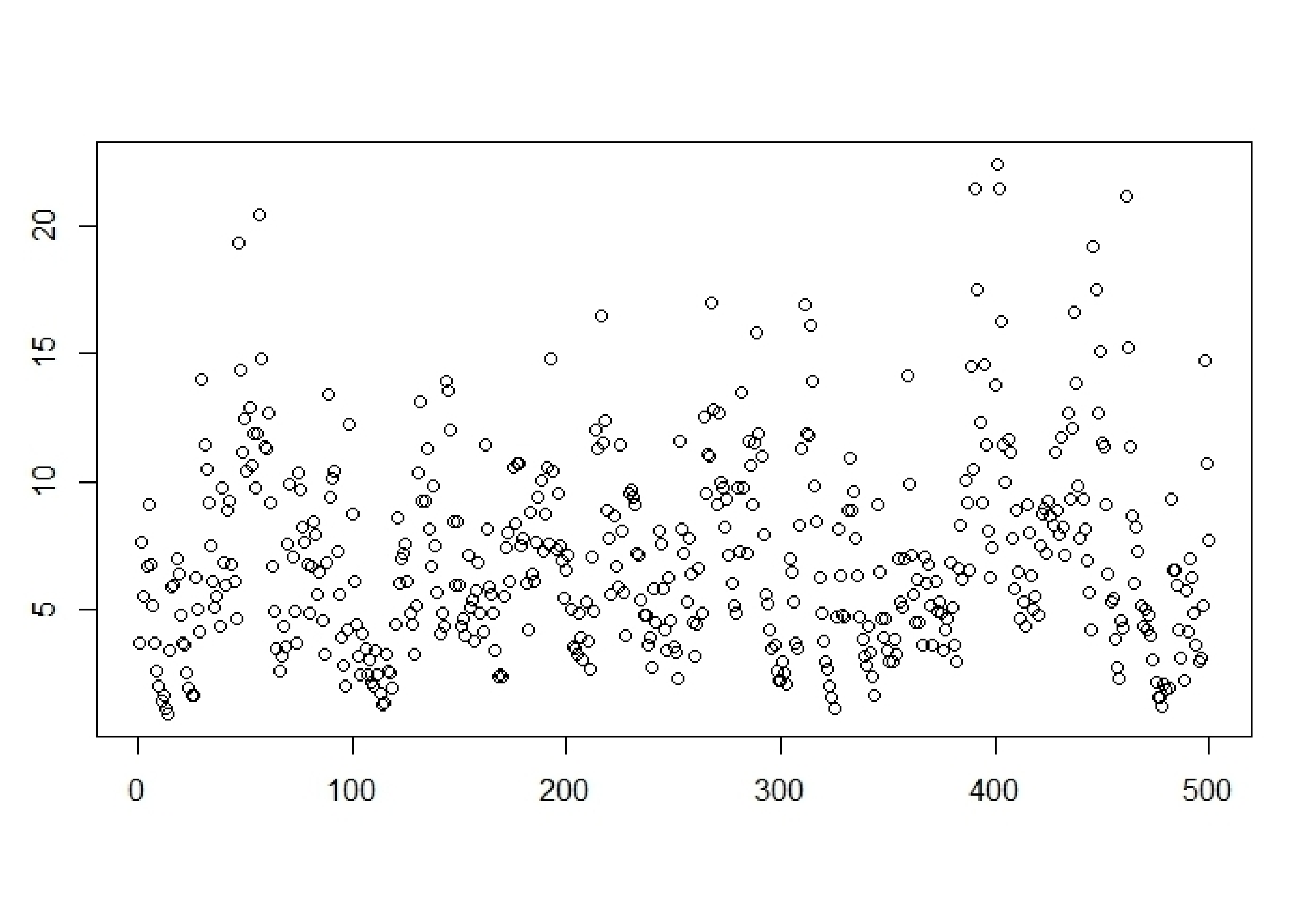}
   \caption{$a_{0}= 2$, $a_{1} =0.7$, $y_{0} = 3$, $\sigma = 2$.}
    \label{fig7:b}
\end{subfigure}
\begin{subfigure}[b]{0.5\linewidth}
    \centering
    \includegraphics[width=1\linewidth]{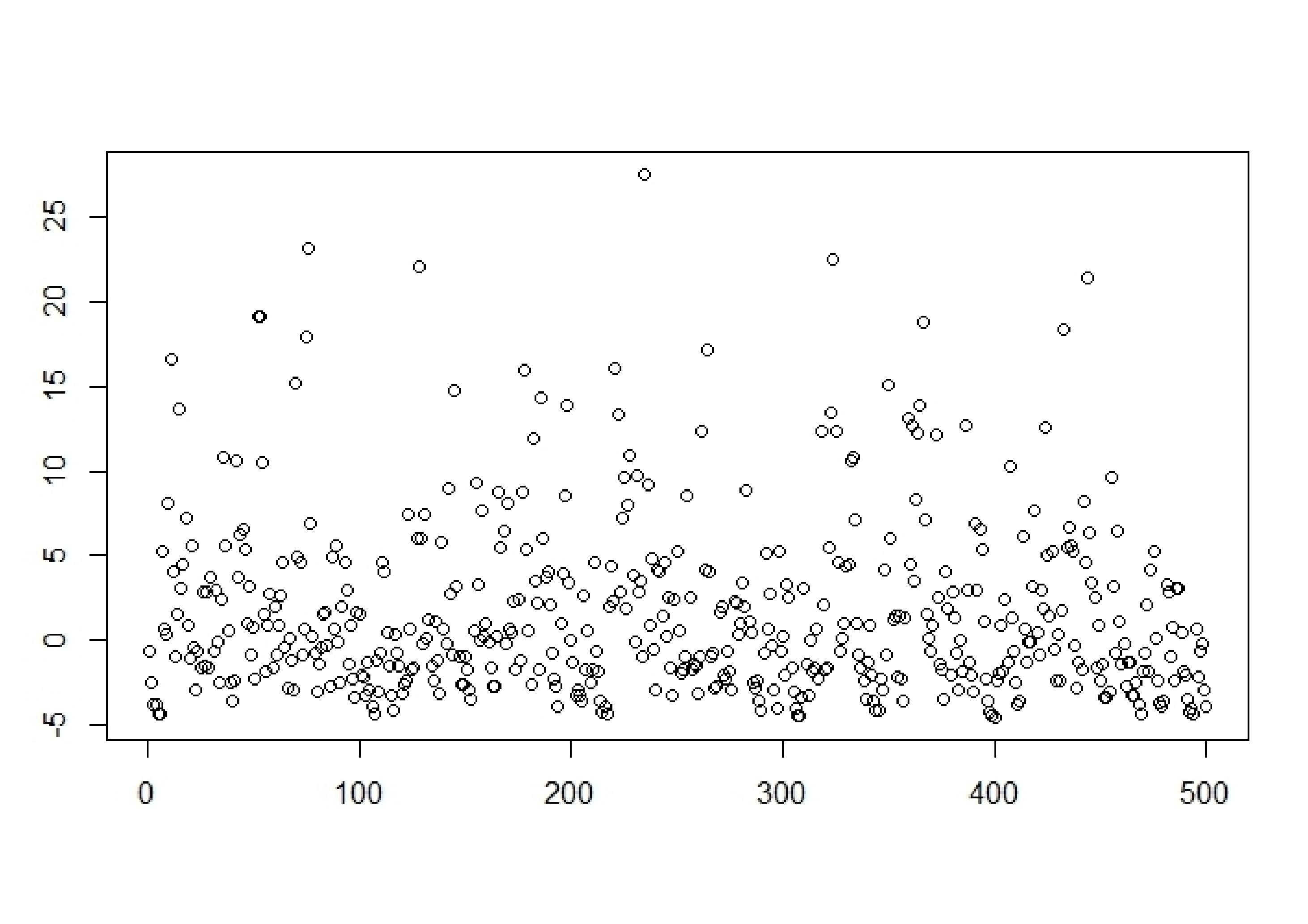}
    \caption{$a_{0}=1.2$, $a_{1} =0.4$, $y_{0} = 2$, $\sigma = 4$.}
    \label{fig7:d}
\end{subfigure}
\begin{subfigure}[b]{0.5\linewidth}
    \centering
    \includegraphics[width=1\linewidth]{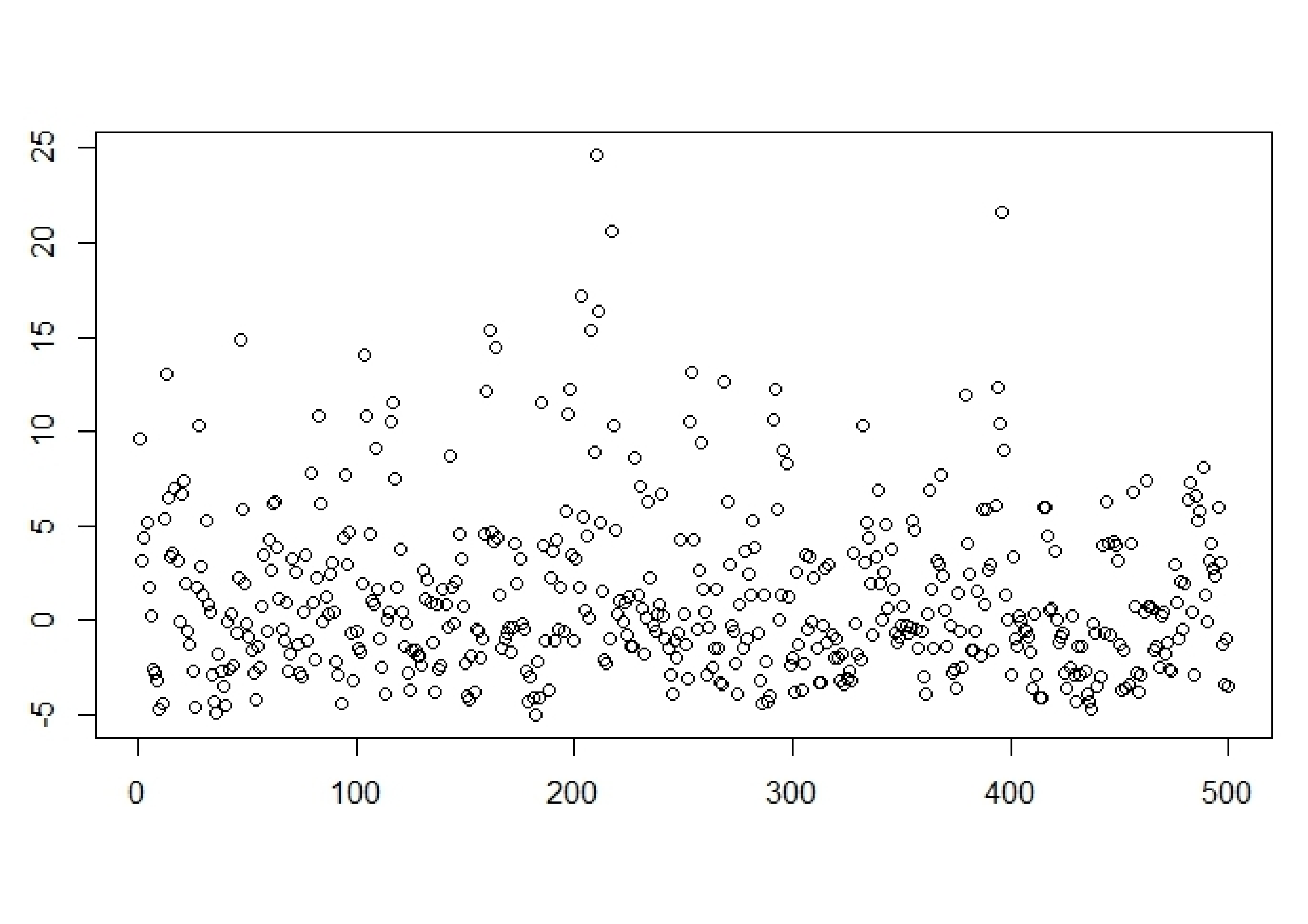}
    \caption{$a_{0}= 0.8$, $a_{1} =0.5$, $y_{0} = 1$, $\sigma_1 = \sigma_2= 2$.}
    \label{fig7:c}
\end{subfigure}
\begin{subfigure}[b]{0.5\linewidth}
        \centering
        \includegraphics[width=1\linewidth]{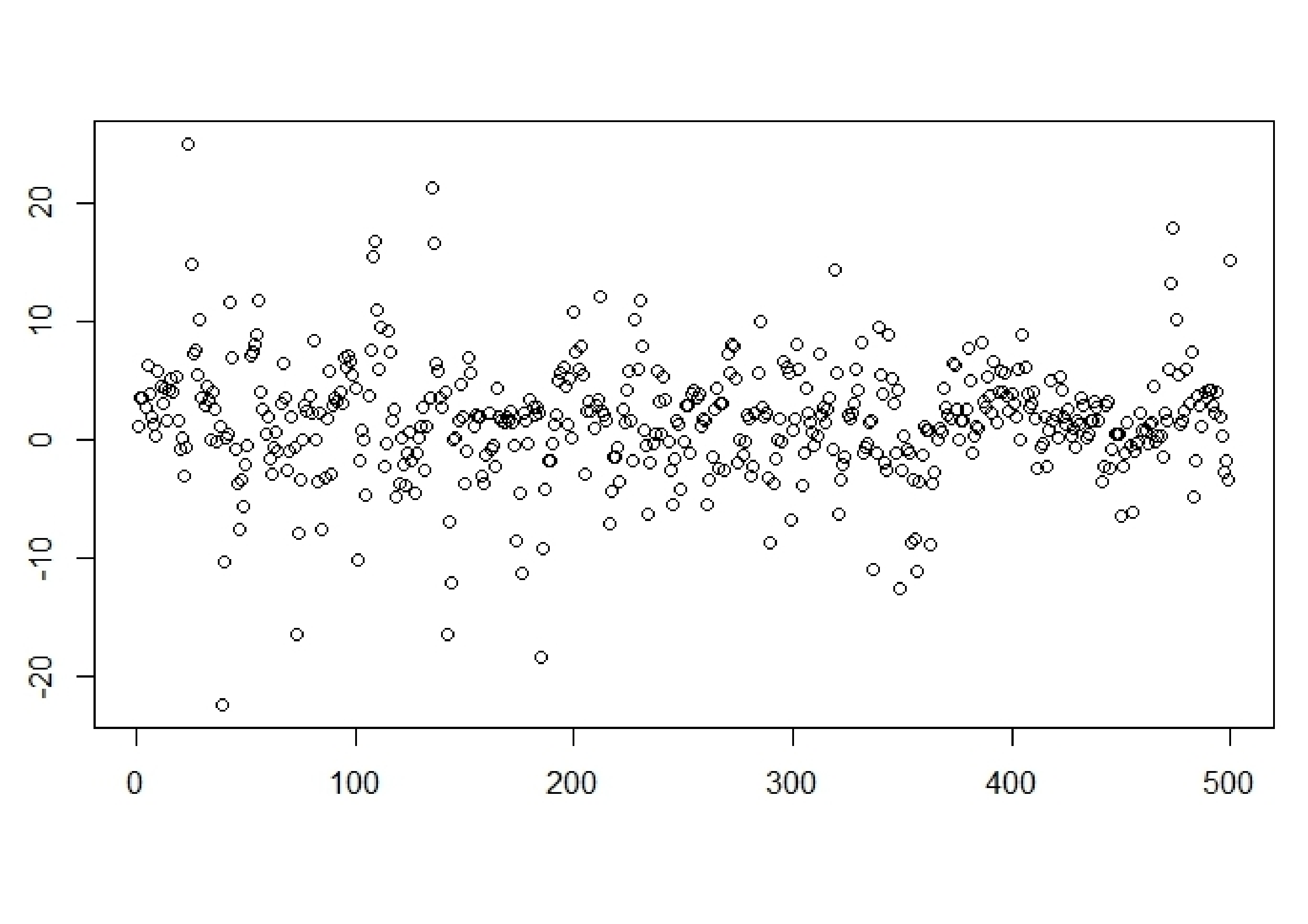}
        \caption{$a_{0}= 1.2$, $a_{1} =0.4$, $y_{0} = 0$, $\sigma_1 = - \sigma_2= 1$.}
        \label{fig7:c}
\end{subfigure}
\caption{ 500 various observations from (\protect\ref{ARmodel}) for
different values of $a_{0}$, $a_{1}$, $y_{0}$ and $\protect\sigma_{\protect%
\delta}$, $\protect\delta =1,2$. }
\label{fig7}
\end{figure}

\begin{remark}
We can see from the figures above the asymmetry in figures (a), (b) and (c)
due to the asymmetric nature of the noise; figure (d) shows more symmetry
because of the choice $\sigma _{1}=-\sigma _{2}$. We also notice that when
the mean reversion is fairly strong and the noise is large the shape of the
observations is balanced (figure (a)), while when the noise is larger
compared to the mean-reversion parameter, the observations look like an
Ornstein-Uhlenbeck process with a noise larger than the drift (see figure
(b)).
\end{remark}

\subsection{Quadratic variation}

This paper's main goal is to determine the asymptotic distribution of the
quadratic variation of the observations $\left\{ Y_{n},n\geq 1\right\} $
using analysis on Wiener space.

This will be facilitated by the fact, mentioned above, that the sequence $%
(Y_{n})_{n\geq 1}$ lives in the second Wiener chaos with respect to the
Wiener process $W$, by virtue of being the solution of a linear equation
with noise in the second chaos. To be more specific, observe that $\left\{
Y_{n},n\geq 1\right\} $ in (\ref{ARmodel}) can be expressed recursively as
follows :
\begin{equation}
Y_{i}=d_{i}+\sum_{k=1}^{i}a_{1}^{i-k}\sum\limits_{\delta =1}^{\infty }\sigma
_{\delta }\left( Z_{k,\delta }^{2}-1\right) ,\hspace{0.5cm}i\geq 1
\end{equation}%
where
\begin{equation}
d_{i}=a_{1}^{i}y_{0}+a_{0}\sum_{k=1}^{i}a_{1}^{i-k}.
\end{equation}%
For the sake of ease of computation in Wiener chaos, it will be convenient
throughout this paper to refer to the Wiener integral representation of the
noise terms $Z_{k,\delta }^{2}$. For this, there exists $\left\{ h_{k,\delta
},k\geq 1,\delta \geq 1\right\} $ an orthonormal family $L^{2}([0,1])$ for
which $Z_{k,\delta }=W(h_{k,\delta })=I_{1}^{W}\left( h_{k,\delta } \right)
=\int_{0}^{1}h_{k,\delta }\left( r\right) dW\left( r\right) $. Hence, using
the fact, which comes from the most elementary application of the product
formula (\ref{product}), that $W^{2}\left( \varphi \right)
-1=I_{2}^{W}\left( \varphi ^{\otimes 2}\right) $, we have for $i\geq 1$ :%
\begin{align*}
Y_{i}& =d_{i}+\sum_{k=1}^{i}a_{1}^{i-k}\sum_{\delta =1}^{\infty }\sigma
_{\delta }\left( Z_{k,\delta }^{2}-1\right) \\
& =d_{i}+\sum_{k=1}^{i}a_{1}^{i-k}\sum_{\delta =1}^{\infty }\sigma _{\delta
}\left( W^{2}(h_{k,\delta })-1\right) \\
& =d_{i}+\sum_{k=1}^{i}a_{1}^{i-k}\sum_{\delta =1}^{\infty }\sigma _{\delta
}I_{2}^{W}(h_{k,\delta }^{\otimes 2})
\end{align*}%
Therefore, using the linearity property of multiple integrals, we can write
for $i\geq 1$,
\begin{equation*}
Y_{i}=d_{i}+\tilde{Y}_{i},
\end{equation*}%
where
\begin{equation}
\tilde{Y}_{i}:=I_{2}^{W}(f_{i})\hspace{0.4cm}\text{and}\hspace{0.4cm}%
f_{i}:=\sum_{k=1}^{i}a_{1}^{i-k}\sum_{\delta =1}^{\infty }\sigma _{\delta
}h_{k,\delta }^{\otimes 2}.  \label{Y_{i}f_{i}}
\end{equation}%
A straightforward computation shows that under Assumption (\ref{assumption}%
), the kernel $f_{i}\in L^{2}([0,1]^{2})$ for all $i\geq 1$ : indeed
\begin{equation*}
\Vert f_{i}\Vert _{L^{2}([0,1]^{2})}^{2}=\sum_{\delta =1}^{\infty }\sigma
_{\delta }^{2}\times \frac{(1-a_{1}^{2i})}{(1-a_{1}^{2})}\leqslant \frac{1}{
(1-a_{1}^{2})}\sum_{\delta =1}^{\infty }\sigma _{\delta }^{2} <\infty .
\end{equation*}

Our main object of study in the next two sections is the asymptotics of the
quadratic variation defined as follows :%
\begin{equation}
Q_{n}:=\frac{1}{n}\sum_{i=1}^{n}(Y_{i}-d_{i})^{2}=\frac{1}{n}\sum_{i=1}^{n}%
\tilde{Y}_{i}^{2}.  \label{Qn}
\end{equation}%
Using product formula (\ref{product}), we get
\begin{align}
Q_{n}-\mathbf{E}[Q_{n}]& =\frac{1}{n}\sum_{i=1}^{n}\left(
I_{2}^{W}(f_{i})^{2}-2\Vert f_{i}\Vert _{L^{2}([0,1]^{2})}^{2}\right)  \notag
\\
& =\frac{1}{n}\sum_{i=1}^{n}I_{4}^{W}(f_{i}\otimes f_{i})+\frac{4}{n}%
\sum_{i=1}^{n}I_{2}^{W}(f_{i}\otimes _{1}f_{i})  \notag \\
& =I_{4}^{W}\left( \frac{1}{n}\sum_{i=1}^{n}f_{i}\otimes f_{i}\right)
+I_{2}^{W}\left( \frac{4}{n}\sum_{i=1}^{n}f_{i}\otimes _{1}f_{i}\right)
\notag \\
& =:T_{4,n}+T_{2,n}.  \label{DecompQn}
\end{align}%
In the next section, we show that the asymptotic variance of $\sqrt{n}\left(
Q_{n}-\mathbf{E}[Q_{n}]\right) $ exits and we will compute its speed of
convergence. Then we establish a CLT for $Q_{n}$, and compute its Berry-Essé%
en speed of convergence in total variation.

\section{Asymptotic variance of the quadratic variation\label{Asymp}}

Using the orthogonality of multiple integrals living in different chaos, to
calculate the limiting variance of $\sqrt{n}(Q_{n}-\mathbf{E}[Q_{n}])$, we
need only study separately the second moments of the terms $T_{2,n}$ and $%
T_{4,n}$ given in (\ref{DecompQn}).

\subsection{Scale constant for $T_{2,n}$}

\begin{proposition}
\label{l1} Under Assumption (\ref{assumption}), with $T_{2,n}$ as in (\ref%
{DecompQn}), for large $n$,
\begin{equation}
\left\vert \mathbf{E}\left[ \left( \sqrt{n}T_{2,n}\right) ^{2}\right] -\frac{%
32\sum\limits_{\delta =1}^{\infty }\sigma _{\delta }^{4}}{(1-a_{1}^{2})^{2}}%
\right\vert \leqslant \frac{C_{1}}{n},  \label{C_{1}}
\end{equation}%
where $C_{1}:=32\left( \sum\limits_{\delta =1}^{\infty }\sigma _{\delta
}^{4}\right) \frac{\left[ 1+a_{1}^{2}(5+6a_{1}^{2})\right] }{%
(1-a_{1}^{4})^{2}(1-a_{1}^{2})}$. In particular
\begin{equation}
l_{1}:=\lim_{n\rightarrow \infty }\mathbf{E}\left[ \left( \sqrt{n}%
T_{2,n}\right) ^{2}\right] =\frac{32\sum\limits_{\delta =1}^{\infty }\sigma
_{\delta }^{4}}{(1-a_{1}^{2})^{2}}.  \label{l_{1}}
\end{equation}
\end{proposition}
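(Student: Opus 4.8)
The plan is to compute $\mathbf{E}[(\sqrt{n}T_{2,n})^2]$ explicitly using the isometry property (\ref{Isometryprop}) for the second chaos, then expand the resulting multiple sum, isolate the ``diagonal'' terms producing the claimed limit, and bound the remainder by a geometric-series argument. Recall from (\ref{DecompQn}) that $T_{2,n}=I_2^W\big(\frac{4}{n}\sum_{i=1}^n f_i\otimes_1 f_i\big)$, so by (\ref{Isometryprop}),
\begin{equation*}
\mathbf{E}\left[\left(\sqrt{n}T_{2,n}\right)^2\right]=\frac{2\cdot 16}{n}\sum_{i,j=1}^n\langle f_i\otimes_1 f_i,\,f_j\otimes_1 f_j\rangle_{L^2([0,1]^2)}.
\end{equation*}
First I would compute the inner product $\langle f_i\otimes_1 f_i, f_j\otimes_1 f_j\rangle$ from the explicit form of $f_i$ in (\ref{Y_{i}f_{i}}). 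Using the orthonormality of the family $\{h_{k,\delta}\}$ in $L^2([0,1])$, the contraction $h_{k,\delta}^{\otimes 2}\otimes_1 h_{k',\delta'}^{\otimes 2}=\langle h_{k,\delta},h_{k',\delta'}\rangle\, h_{k,\delta}\otimes h_{k',\delta'}$ vanishes unless $(k,\delta)=(k',\delta')$, so $f_i\otimes_1 f_i=\sum_{k=1}^i a_1^{2(i-k)}\sum_\delta \sigma_\delta^2\, h_{k,\delta}^{\otimes 2}$, and hence
\begin{equation*}
\langle f_i\otimes_1 f_i,\,f_j\otimes_1 f_j\rangle=\Big(\sum_{\delta=1}^\infty\sigma_\delta^4\Big)\sum_{k=1}^{i\wedge j}a_1^{2(i-k)}a_1^{2(j-k)}.
\end{equation*}

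Next I would substitute this into the double sum and split off the leading behaviour. Summing the geometric series in $k$ gives $\sum_{k=1}^{i\wedge j}a_1^{2(i+j-2k)}=a_1^{2|i-j|}\frac{1-a_1^{4(i\wedge j)}}{1-a_1^4}$. So up to the constant $32\sum\sigma_\delta^4$, we need $\frac{1}{n}\sum_{i,j=1}^n a_1^{2|i-j|}\frac{1-a_1^{4(i\wedge j)}}{1-a_1^4}$. The main term comes from replacing $1-a_1^{4(i\wedge j)}$ by $1$: then $\frac{1}{n(1-a_1^4)}\sum_{i,j}a_1^{2|i-j|}\to \frac{1}{1-a_1^4}\cdot\frac{1+a_1^2}{1-a_1^2}=\frac{1}{(1-a_1^2)^2}$, using the standard fact that $\frac{1}{n}\sum_{i,j=1}^n\rho^{|i-j|}\to\frac{1+\rho}{1-\rho}$ for $|\rho|<1$, with an $O(1/n)$ error (the boundary corrections to the double geometric sum are $O(1)$). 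Multiplying by $32\sum\sigma_\delta^4$ recovers $l_1$ in (\ref{l_{1}}). The correction term involving $a_1^{4(i\wedge j)}$ must be shown to be $O(1/n)$: since $\frac{1}{n}\sum_{i,j=1}^n a_1^{2|i-j|}a_1^{4(i\wedge j)}$ is bounded by a convergent double geometric sum divided by $n$, this is immediate, and tracking the constants carefully yields the explicit $C_1$.

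The only real obstacle is the bookkeeping: one must carefully account for the three regions $i<j$, $i=j$, $i>j$ in the double sum, correctly collect all the $O(1/n)$ boundary terms coming from (a) the finite-$n$ truncation of $\sum_{i,j}a_1^{2|i-j|}$ versus its infinite-sum limit and (b) the $a_1^{4(i\wedge j)}$ correction, and verify that their combined contribution does not exceed the stated $C_1/n$ with $C_1=32\big(\sum_\delta\sigma_\delta^4\big)\frac{1+a_1^2(5+6a_1^2)}{(1-a_1^4)^2(1-a_1^2)}$. This is a routine but slightly delicate geometric-series estimate; I would organize it by writing $\frac{1}{n}\sum_{i,j=1}^n a_1^{2|i-j|}=\frac{1}{1-a_1^2}\big(1+a_1^2\frac{?}{?}\big)-\frac{(\text{boundary})}{n}$ explicitly and bounding each piece. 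The convergence of $\sum_\delta\sigma_\delta^4$ is guaranteed by Assumption (\ref{assumption}) since $\sum\sigma_\delta^4\leqslant(\sum\sigma_\delta^2)^2<\infty$, so no integrability issue arises. Once the error bound is in place, (\ref{l_{1}}) follows by letting $n\to\infty$.
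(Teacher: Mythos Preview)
Your proposal is correct and follows essentially the same approach as the paper: compute $\mathbf{E}[(\sqrt{n}T_{2,n})^2]$ via the isometry (\ref{Isometryprop}), reduce $f_i\otimes_1 f_i$ to $\sum_{k\le i}a_1^{2(i-k)}\sum_\delta\sigma_\delta^2 h_{k,\delta}^{\otimes 2}$ using orthonormality, obtain the closed form $\langle f_i\otimes_1 f_i,f_j\otimes_1 f_j\rangle=\big(\sum_\delta\sigma_\delta^4\big)a_1^{2|i-j|}(1-a_1^{4(i\wedge j)})/(1-a_1^4)$, and then control the double sum by geometric-series estimates. The only organizational difference is that the paper splits the double sum into the diagonal $i=j$ and off-diagonal $j>i$ pieces and bounds each against its limit, whereas you split into the ``main'' part (drop $a_1^{4(i\wedge j)}$) and the ``correction'' part; both decompositions produce the same $O(1/n)$ remainder and, with the bookkeeping carried out, the same constant $C_1$.
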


\begin{proof}
We have $T_{2,n} = I^{W}_{2}(\frac{4}{n}\sum\limits_{i=1}^{n} f_{i}
\otimes_{1} f_{i})$, by the isometry property (\ref{Isometryprop}) of
multiple integrals, we get
\begin{equation}  \label{T_{2,n}moment}
\mathbf{E}[T_{2,n}^{2}] = 2 \| \frac{4}{n}\sum_{i=1}^{n} f_{i} \otimes_{1}
f_{i} \|^{2}_{L^{2}([0,1]^{2})} = \frac{32}{n^{2}} \sum_{i,j=1}^{n}
\left\langle f_{i} \otimes_{1} f_{i}, f_{j} \otimes_{1} f_{j}
\right\rangle_{L^{2}([0,1]^{2})}.
\end{equation}
Moreover, under Assumption (\ref{assumption}), we have
\begin{align*}
(f_{i} \otimes_{1} f_{i})(x,y) & = \sum\limits_{k_{1},k_{2}=1}^{i}
a_{1}^{i-k_{1}}
a_{1}^{i-k_{2}}\sum\limits_{\delta_{1},\delta_{2}=1}^{\infty}
\sigma_{\delta_{1}} \sigma_{\delta_{2}} (h_{k_{1},\delta_{1}}^{\otimes 2}
\otimes_{1}h_{k_{2},\delta_{2}}^{\otimes 2})(x,y) \\
& = \sum\limits_{k_{1},k_{2}=1}^{i} a_{1}^{i-k_{1}}
a_{1}^{i-k_{2}}\sum\limits_{\delta_{1},\delta_{2}=1}^{\infty}
\sigma_{\delta_{1}} \sigma_{\delta_{2}} (h_{{k_{1},\delta_{1}}} \otimes h_{{%
k_{2},\delta_{2}}})(x,y) \delta_{\delta_{1},\delta_{2}}\delta_{k_{1},k_{2}}
\\
& = \sum_{k=1}^{i} a_{1}^{2(i-k)} \sum_{\delta={1}}^{\infty}
\sigma_{\delta}^{2} (h_{k,\delta}^{\otimes 2 })(x,y).
\end{align*}
Therefore, for $i, j \geq 1$ such that $j \geq i$, we get
\begin{align*}
\left\langle f_{i} \otimes_{1} f_{i}, f_{j} \otimes_{1} f_{j}
\right\rangle_{L^{2}([0,1]^{2})} & = \sum_{k_{1}=1}^{i} a_{1}^{2(i-k_{1})}
\sum_{k_{2}=1}^{j} a_{1}^{2(j-k_{2})}
\sum\limits_{\delta_{1},\delta_{2}=1}^{\infty} \sigma_{\delta_{1}}^{2}
\sigma_{\delta_{2}}^{2} \left\langle h_{k_{1},\delta_{1}}^{\otimes 2},
h_{k_{2},\delta_{2}}^{\otimes 2} \right\rangle_{L^{2}([0,1]^{2})} \\
& = \sum_{k_{1}=1}^{i} a_{1}^{2(i-k_{1})} \sum_{k_{2}=1}^{j}
a_{1}^{2(j-k_{2})} \sum\limits_{\delta_{1},\delta_{2}=1}^{\infty}
\sigma_{\delta_{1}}^{2} \sigma_{\delta_{2}}^{2} \left(\left\langle
h_{k_{1},\delta_{1}}, h_{k_{2},\delta_{2}} \right\rangle_{L^{2}([0,1])}
\right)^{2} \\
& = \sum_{k_{1}=1}^{i} a_{1}^{2(i-k_{1})} \sum_{k_{2}=1}^{j}
a_{1}^{2(j-k_{2})} \sum\limits_{\delta_{1},\delta_{2}=1}^{\infty}
\sigma_{\delta_{1}}^{2} \sigma_{\delta_{2}}^{2}
\delta_{\delta_{1},\delta_{2}}\delta_{k_{1},k_{2}} \\
& = a_{1}^{2(j-i)} \sum_{k=1}^{i}a_{1}^{4(i-k)} \times
\sum\limits_{\delta=1}^{\infty} \sigma_{\delta}^{4} \\
& = \sum\limits_{\delta=1}^{\infty} \sigma_{\delta}^{4} \times
a_{1}^{2(j-i)} \left( \frac{1-a_{1}^{4i}}{1-a_{1}^{4}}\right).
\end{align*}
Therefore, by (\ref{T_{2,n}moment}), we have
\begin{align*}
\mathbf{E}\left[(\sqrt{n}T_{2,n})^{2}\right] & = \frac{32}{n}%
\sum\limits_{i=1}^{n} \|f_{i} \otimes_{1}f_{i}\|^{2}_{L^{2}([0,1]^{2})} +
\frac{64}{n} \sum\limits_{i=1}^{n-1}\sum\limits_{j=i+1}^{n} \left\langle
f_{i} \otimes_{1} f_{i}, f_{j} \otimes_{1} f_{j}
\right\rangle_{L^{2}([0,1]^{2})}.
\end{align*}
Moreover,
\begin{equation*}
\left|\frac{32}{n} \sum\limits_{i=1}^{n} \|f_{i}
\otimes_{1}f_{i}\|^{2}_{L^{2}([0,1]^{2})} - \frac{32 \sum\limits_{\delta =
1}^{\infty} \sigma_{\delta}^{4}}{(1-a_{1}^{4})} \right| \\
= \frac{32 \sum\limits_{\delta = 1}^{\infty} \sigma_{\delta}^{4}}{%
(1-a_{1}^{4})} \left|-\frac{1}{n} \sum\limits_{i =1}^{n}a_{1}^{4i} \right| \\
\leqslant \frac{32 \sum\limits_{\delta = 1}^{\infty} \sigma_{\delta}^{4}}{%
(1-a_{1}^{4})^2} \frac{1}{n}.
\end{equation*}
On the other hand
\begin{align*}
&\left|\frac{64}{n} \sum\limits_{i=1}^{n-1}\sum\limits_{j=i+1}^{n}
\left\langle f_{i} \otimes_{1} f_{i}, f_{j} \otimes_{1} f_{j}
\right\rangle_{L^{2}([0,1]^{2})} - \frac{64 a_{1}^{2} \sum\limits_{\delta =
1}^{\infty} \sigma_{\delta}^{4}}{(1-a_{1}^{4})(1-a_{1}^{2})} \right| \\
& = \left| \frac{64 \sum\limits_{\delta = 1}^{\infty} \sigma_{\delta}^{4}}{%
(1-a_{1}^{4})} \frac{1}{n} \sum\limits_{i=1}^{n-1}(1-a_{1}^{4i})
\sum\limits_{j=i+1}^{n} a_{1}^{2(j-i)} - \frac{64 a_{1}^{2}
\sum\limits_{\delta = 1}^{\infty} \sigma_{\delta}^{4} }{%
(1-a_{1}^{4})(1-a_{1}^{2})} \right| \\
& \leqslant \frac{64 a_{1}^{2} \sum\limits_{\delta = 1}^{\infty}
\sigma_{\delta}^{4} }{(1-a_{1}^{4})(1-a_{1}^{2})} \frac{1}{n}
\sum\limits_{i=1}^{n} \left| \left((1-a_{1}^{4i}) (1-a_{1}^{2(n-i)}) - 1
\right) \right| \\
& \leqslant \frac{64 a_{1}^{2} \sum\limits_{\delta = 1}^{\infty}
\sigma_{\delta}^{4} }{(1-a_{1}^{4})(1-a_{1}^{2})} \left( \frac{3}{n}
\sum\limits_{i=0}^{n} a_{1}^{2i} \right) \\
& \leqslant \frac{64 a_{1}^{2} \sum\limits_{\delta = 1}^{\infty}
\sigma_{\delta}^{4}}{(1-a_{1}^{4})(1-a_{1}^{2})^2} \frac{3}{n}.
\end{align*}
Consequently
\begin{align*}
\left| \mathbf{E}\left[\left(\sqrt{n}T_{2,n} \right)^{2}\right] - \frac{32
\sum\limits_{\delta=1}^{\infty}\sigma_{\delta}^{4}}{(1-a_{1}^{2})^{2}}
\right| & \leqslant \left|\frac{32}{n} \sum\limits_{i=1}^{n} \|f_{i}
\otimes_{1}f_{i}\|^{2}_{L^{2}([0,1]^{2})} - \frac{32 \sum\limits_{\delta =
1}^{\infty} \sigma_{\delta}^{4}}{(1-a_{1}^{4})} \right| \\
&\quad + \left|\frac{64}{n} \sum\limits_{i=1}^{n-1}\sum\limits_{j=i+1}^{n}
\left\langle f_{i} \otimes_{1} f_{i}, f_{j} \otimes_{1} f_{j}
\right\rangle_{L^{2}([0,1]^{2})} - \frac{64 a_{1}^{2} \sum\limits_{\delta =
1}^{\infty} \sigma_{\delta}^{4}}{(1-a_{1}^{4})(1-a_{1}^{2})} \right| \\
& \leqslant \frac{32 \sum\limits_{\delta = 1}^{\infty} \sigma_{\delta}^{4}}{%
(1-a_{1}^{4})^2} \frac{1}{n} + \frac{64 a_{1}^{2} \sum\limits_{\delta =
1}^{\infty} \sigma_{\delta}^{4}}{(1-a_{1}^{4})(1-a_{1}^{2})^2} \frac{3}{n} \\
& \leqslant 32 \sum\limits_{\delta = 1}^{\infty} \sigma_{\delta}^{4} \frac{%
\left[1 + a_{1}^2(5+6a_{1}^2)\right] }{(1-a_{1}^{4})^2(1-a_{1}^2)} \frac{1}{n%
}.
\end{align*}
The desired is therefore obtained.
\end{proof}

\subsection{Scale constant for $T_{4,n}$}

\begin{proposition}
\label{l2}Under Assumption (\ref{assumption}), with $T_{4,n}$ as in (\ref%
{DecompQn}), for large $n$,%
\begin{equation*}
\left\vert \mathbf{E}\left[ (\sqrt{n}T_{4,n})^{2}\right] -\frac{4}{%
(1-a_{1}^{2})^{2}}\left[ \sum\limits_{\delta =1}^{\infty }\sigma _{\delta
}^{4}+\left( \frac{1+a_{1}^{2}}{1-a_{1}^{2}}\right) \left( \sum_{\delta
=1}^{\infty }\sigma _{\delta }^{2}\right) ^{2}\right] \right\vert \leqslant
\frac{C_{2}}{n},
\end{equation*}%
where
\begin{equation}
C_{2}:=C_{2,1}+C_{2,2},\text{ \ }C_{2,1}:=4{\ \left( \sum\limits_{\delta
=1}^{\infty }\sigma _{\delta }^{2}\right) ^{2}}\frac{(3+17a_{1}^{2})}{%
(1-a_{1}^{2})^{4}}{\ \ }\text{ , }{\ \ }C_{2,2}:=4\left( {%
\sum\limits_{\delta =1}^{\infty }\sigma _{\delta }^{4}}\right) \frac{\left[
1+a_{1}^{2}(5+6a_{1}^{2})\right] }{(1-a_{1}^{4})^{2}(1-a_{1}^{2})}.
\label{C_{2}}
\end{equation}%
In particular
\begin{equation}
l_{2}:=\lim\limits_{n\rightarrow \infty }\mathbf{E}\left[ (\sqrt{n}%
T_{4,n})^{2}\right] =\frac{4}{(1-a_{1}^{2})^{2}}\left[ \sum\limits_{\delta
=1}^{\infty }\sigma _{\delta }^{4}+\left( \frac{1+a_{1}^{2}}{1-a_{1}^{2}}%
\right) \left( \sum_{\delta =1}^{\infty }\sigma _{\delta }^{2}\right) ^{2}%
\right] .  \label{l_{2}}
\end{equation}
\end{proposition}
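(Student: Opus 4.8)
The plan is to follow the same route as the proof of Proposition~\ref{l1}, adding one combinatorial ingredient for the fourth chaos. Since $T_{4,n}=I_{4}^{W}\bigl(\tfrac{1}{n}\sum_{i=1}^{n}f_{i}\otimes f_{i}\bigr)$, the first step is to invoke the isometry~(\ref{Isometryprop}), being careful that it controls the \emph{symmetrized} kernel: with $g_{n}:=\tfrac{1}{n}\sum_{i=1}^{n}f_{i}\otimes f_{i}$,
\[
\mathbf{E}\bigl[(\sqrt{n}\,T_{4,n})^{2}\bigr]=n\cdot 4!\,\bigl\|\widetilde{g_{n}}\bigr\|_{L^{2}([0,1]^{4})}^{2}=\frac{4!}{n}\sum_{i,j=1}^{n}\bigl\langle\widetilde{f_{i}\otimes f_{i}},\,\widetilde{f_{j}\otimes f_{j}}\bigr\rangle_{L^{2}([0,1]^{4})}.
\]
The combinatorial fact I would use is that, for symmetric $f,g\in L^{2}([0,1]^{2})$,
\[
\widetilde{f\otimes f}=\tfrac{1}{3}\bigl(f(x_{1},x_{2})f(x_{3},x_{4})+f(x_{1},x_{3})f(x_{2},x_{4})+f(x_{1},x_{4})f(x_{2},x_{3})\bigr),
\]
so that $\bigl\langle\widetilde{f\otimes f},\widetilde{g\otimes g}\bigr\rangle=\tfrac{1}{3}\langle f,g\rangle^{2}+\tfrac{2}{3}\langle f\otimes_{1}g,\,g\otimes_{1}f\rangle$. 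The term $\langle f\otimes_{1}g,g\otimes_{1}f\rangle$ is not $\|f\otimes_{1}g\|^{2}$ in general, but here every $f_{i}\otimes_{1}f_{j}$ is a weighted sum of the symmetric kernels $h_{k,\delta}^{\otimes 2}$, hence symmetric and equal to $f_{j}\otimes_{1}f_{i}$, so the two coincide. This yields the clean split
\[
\mathbf{E}\bigl[(\sqrt{n}\,T_{4,n})^{2}\bigr]=\frac{c}{n}\sum_{i,j=1}^{n}\langle f_{i},f_{j}\rangle^{2}+\frac{c'}{n}\sum_{i,j=1}^{n}\|f_{i}\otimes_{1}f_{j}\|^{2},
\]
for explicit numerical constants $c,c'$. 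The second sum is exactly of the type controlled in Proposition~\ref{l1}: it is responsible for the $\sum_{\delta}\sigma_{\delta}^{4}$ part of the limit and for a remainder proportional to $C_{1}$, which is why $C_{2,2}$ has the same analytic form as $C_{1}$; the first sum is the genuinely new piece, responsible for the $\bigl(\sum_{\delta}\sigma_{\delta}^{2}\bigr)^{2}$ part of the limit and for $C_{2,1}$.

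For the second step I would evaluate the two inner products using only the orthonormality of $\{h_{k,\delta}\}$ in $L^{2}([0,1])$, exactly as in Proposition~\ref{l1}. From $f_{i}=\sum_{k=1}^{i}a_{1}^{i-k}\sum_{\delta}\sigma_{\delta}h_{k,\delta}^{\otimes 2}$ and $\langle h_{k_{1},\delta_{1}}^{\otimes 2},h_{k_{2},\delta_{2}}^{\otimes 2}\rangle=\delta_{k_{1},k_{2}}\delta_{\delta_{1},\delta_{2}}$, a short computation gives, for $i\leqslant j$,
\[
\langle f_{i},f_{j}\rangle=\Bigl(\sum_{\delta}\sigma_{\delta}^{2}\Bigr)a_{1}^{\,j-i}\,\frac{1-a_{1}^{2i}}{1-a_{1}^{2}},\qquad \|f_{i}\otimes_{1}f_{j}\|^{2}=\Bigl(\sum_{\delta}\sigma_{\delta}^{4}\Bigr)a_{1}^{\,2(j-i)}\,\frac{1-a_{1}^{4i}}{1-a_{1}^{4}};
\]
the second expression is exactly the quantity $\langle f_{i}\otimes_{1}f_{i},f_{j}\otimes_{1}f_{j}\rangle$ appearing in the proof of Proposition~\ref{l1}.

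The third step is the bookkeeping of the geometric sums. In each double sum I would write $\sum_{i,j}=\sum_{i=j}+2\sum_{i<j}$, perform the inner geometric summation over $j$ (ratio $a_{1}^{2}$ in both sums), and then replace the factors $1-a_{1}^{2i}$, $1-a_{1}^{4i}$ and $1-a_{1}^{2(n-i)}$ by $1$; the leading terms then converge --- using $\tfrac{1}{n}\sum_{i,j\leqslant n}a_{1}^{2|i-j|}\to\tfrac{1+a_{1}^{2}}{1-a_{1}^{2}}$ and the same with exponent~$4$ --- to the two constants in the claimed value of $l_{2}$, while each replacement above costs only a finite number of geometric series bounded by $\tfrac{1}{n(1-a_{1}^{2})}$ or $\tfrac{1}{n(1-a_{1}^{4})}$. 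Collecting these contributions --- for the $\|f_{i}\otimes_{1}f_{j}\|^{2}$ part by the same estimates as in Proposition~\ref{l1}, in particular via $\bigl|(1-a_{1}^{4i})(1-a_{1}^{2(n-i)})-1\bigr|\leqslant 3$, and by the analogous estimate (now involving $(1-a_{1}^{2i})^{2}$) for the $\langle f_{i},f_{j}\rangle^{2}$ part --- yields $C_{2}=C_{2,1}+C_{2,2}$, and letting $n\to\infty$ gives the stated limit. The main difficulty here is organizational: tracking the exact coefficient of every $O(1/n)$ term so as to reconstruct $C_{2}$ precisely, the $\langle f_{i},f_{j}\rangle^{2}$ sum being the one not already present in Proposition~\ref{l1}. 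The one genuinely delicate point, by contrast, sits in the first step: the isometry must be applied to $\widetilde{f_{i}\otimes f_{i}}$ rather than to $f_{i}\otimes f_{i}$, and it is precisely the $\tfrac{1}{3}/\tfrac{2}{3}$ weighting it introduces that makes both an inner-product-squared term and a contraction-norm term appear --- so that, unlike the second-chaos part $T_{2,n}$ (whose scale constant $l_{1}$ involves only $\sum_{\delta}\sigma_{\delta}^{4}$), the fourth-chaos part $T_{4,n}$ also picks up a $\bigl(\sum_{\delta}\sigma_{\delta}^{2}\bigr)^{2}$ term in its limiting variance.
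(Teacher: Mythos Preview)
Your approach is essentially identical to the paper's: apply the isometry~(\ref{Isometryprop}) to the symmetrized kernel, expand $4!\langle\widetilde{f_i\otimes f_i},\widetilde{f_j\otimes f_j}\rangle$ into an $\langle f_i,f_j\rangle^2$ piece and an $\langle f_i\otimes_1 f_j,f_j\otimes_1 f_i\rangle$ piece, evaluate both explicitly from the orthonormality of the $h_{k,\delta}$, and then do the same geometric-sum bookkeeping as in Proposition~\ref{l1}; your observation that $f_i\otimes_1 f_j$ is symmetric here, so that $\langle f_i\otimes_1 f_j,f_j\otimes_1 f_i\rangle=\|f_i\otimes_1 f_j\|^2$, is exactly what the paper uses implicitly.

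One point deserves attention. Your $\tfrac13/\tfrac23$ symmetrization identity is correct and gives
\[
4!\,\bigl\langle\widetilde{f\otimes f},\widetilde{g\otimes g}\bigr\rangle
= 8\,\langle f,g\rangle^{2}+16\,\langle f\otimes_{1}g,\,g\otimes_{1}f\rangle,
\]
i.e.\ $c=8$, $c'=16$, whereas the paper records both coefficients as $(2!)^{2}=4$. Your numbers are the right ones: for $f=g=e^{\otimes 2}$ with $\|e\|_{\mathcal H}=1$ one has $\widetilde{f\otimes f}=e^{\otimes 4}$, hence $4!\|\widetilde{f\otimes f}\|^{2}=24=8+16$, not $8$; equivalently, for $a_1=0$ and a single $\sigma$ one checks directly $n\,\mathrm{Var}(Q_n)=\sigma^{4}\mathrm{Var}\bigl[(Z^{2}-1)^{2}\bigr]=56\sigma^{4}$, which matches $l_1+l_2=32\sigma^{4}+(16+8)\sigma^{4}$ with your constants but not with the paper's $32+4+4=40$. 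So if you carry your plan through exactly as written you will reproduce the paper's proof step for step, but arrive at $l_2$ with the leading $4$'s replaced by $16$ and $8$ (and $C_{2,1},C_{2,2}$ rescaled by $2$ and $4$ respectively); the method is the paper's, the arithmetic is yours.
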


\begin{proof}
By definition of the term $T_{4,n}$, we have
\begin{align*}
\mathbf{E}[(\sqrt{n}T_{4,n})^{2}]& =4!\Vert \frac{1}{\sqrt{n}}%
\sum_{i=1}^{n}f_{i}\tilde{\otimes}f_{i}\Vert _{L^{2}([0,1]^{4})}^{2} \\
& =\frac{4!}{n}\sum_{i,j=1}^{n}\left\langle f_{i}\tilde{\otimes}f_{i},f_{j}%
\tilde{\otimes}f_{j}\right\rangle _{L^{2}([0,1]^{4})},
\end{align*}%
where $f_{i}\tilde{\otimes}f_{i}$ denotes the symmetrization of $%
f_{i}\otimes f_{i}$, because the kernel $\sum_{i=1}^{n}f_{i}{\otimes }%
f_{i}\in L^{2}([0,1]^{4})$ is no longer symmetric. We deal with
symmetrization by using a combinatorial formula, obtaining
\begin{equation*}
4!\left\langle f_{i}\tilde{\otimes}f_{i},f_{j}\tilde{\otimes}%
f_{j}\right\rangle _{L^{2}([0,1]^{4})}=(2!)^{2}\left\langle f_{i}\otimes
f_{i},f_{j}\otimes f_{j}\right\rangle
_{L^{2}([0,1]^{4})}+(2!)^{2}\left\langle f_{i}\otimes _{1}f_{j},f_{j}\otimes
_{1}f_{i}\right\rangle _{L^{2}([0,1]^{2})}.
\end{equation*}%
Therefore
\begin{align*}
\mathbf{E}[(\sqrt{n}T_{4,n})^{2}]& =\frac{4}{n}\sum_{i,j=1}^{n}\left\langle
f_{i}\otimes f_{i},f_{j}\otimes f_{j}\right\rangle _{L^{2}([0,1]^{4})}+\frac{%
4}{n}\sum_{i,j=1}^{n}\left\langle f_{i}\otimes _{1}f_{j},f_{j}\otimes
_{1}f_{i}\right\rangle _{L^{2}([0,1]^{2})} \\
& =:T_{4,1,n}+T_{4,2,n}.
\end{align*}%
Moreover,
\begin{align}
T_{4,1,n}& =\frac{4}{n}\sum_{i,j=1}^{n}\left\langle f_{i}\otimes
f_{i},f_{j}\otimes f_{j}\right\rangle _{L^{2}([0,1]^{4})}  \notag \\
& =\frac{4}{n}\sum_{i,j=1}^{n}\left( \left\langle f_{i},f_{j}\right\rangle
_{L^{2}([0,1]^{2})}\right) ^{2}  \notag \\
& =\frac{4}{n}\sum_{i=1}^{n}\left( \left\langle f_{i},f_{i}\right\rangle
_{L^{2}([0,1]^{2})}\right) ^{2}+\frac{8}{n}\sum_{i=1}^{n-1}\sum_{j=i+1}^{n}%
\left( \left\langle f_{i},f_{j}\right\rangle _{L^{2}([0,1]^{2})}\right) ^{2}.
\label{T4.1.n}
\end{align}%
On the other hand, using (\ref{assumption}), we have for $j\geq i$
\begin{align*}
\left\langle f_{i},f_{j}\right\rangle _{L^{2}([0,1]^{2})}&
=\sum_{k_{1}=1}^{i}a_{1}^{i-k_{1}}\sum_{k_{2}=1}^{j}a_{1}^{j-k}\sum_{\delta
_{1},\delta _{2}=1}^{\infty }\sigma _{\delta _{1}}\sigma _{\delta
_{2}}\delta _{\delta _{1},\delta _{2}}\delta _{k_{1},k_{2}} \\
& =a_{1}^{(j-i)}\times \left( \sum_{\delta =1}^{\infty }\sigma _{\delta
}^{2}\right) \times \left( \frac{1-a_{1}^{2i}}{1-a_{1}^{2}}\right) .
\end{align*}%
Therefore by (\ref{T4.1.n}), we get
\begin{eqnarray*}
\left\vert T_{4,1,n}-\frac{4(1+a_{1}^{2})}{(1-a_{1}^{2})^{3}}\left(
\sum_{\delta =1}^{\infty }\sigma _{\delta }^{2}\right) ^{2}\right\vert
&\leqslant &\left\vert \frac{4}{n}\sum_{i=1}^{n}\Vert f_{i}\Vert
_{L^{2}([0,1]^{2})}^{4}-\frac{4}{(1-a_{1}^{2})^{2}}\left( \sum_{\delta
=1}^{\infty }\sigma _{\delta }^{2}\right) ^{2}\right\vert \\
&&\qquad +\left\vert \frac{8}{n}\sum_{i=1}^{n-1}\sum_{j=i+1}^{n}\left(
\left\langle f_{i},f_{j}\right\rangle _{L^{2}([0,1]^{2})}\right) ^{2}-\frac{%
8a_{1}^{2}}{(1-a_{1}^{2})^{3}}\left( \sum_{\delta =1}^{\infty }\sigma
_{\delta }^{2}\right) ^{2}\right\vert \\
&\leqslant &\frac{4}{(1-a_{1}^{2})^{2}}\left( \sum_{\delta =1}^{\infty
}\sigma _{\delta }^{2}\right) ^{2}\left\vert \frac{1}{n}%
\sum_{i=1}^{n}((1-a_{1}^{2i})^{2}-1)\right\vert \\
&&\qquad +\frac{8a_{1}^{2}}{(1-a_{1}^{2})^{3}}\left( \sum_{\delta
=1}^{\infty }\sigma _{\delta }^{2}\right) ^{2}\left\vert \frac{1}{n}%
\sum_{i=1}^{n}\left( (1-a_{1}^{2i})^{2}(1-a_{1}^{2(n-i)})-1\right)
\right\vert \\
&\leqslant &\frac{12}{(1-a_{1}^{2})^{3}}\left( \sum_{\delta =1}^{\infty
}\sigma _{\delta }^{2}\right) ^{2}\frac{1}{n}+\frac{80a_{1}^{2}}{%
(1-a_{1}^{2})^{4}}\left( \sum_{\delta =1}^{\infty }\sigma _{\delta
}^{2}\right) ^{2}\frac{1}{n} \\
&=&\frac{4\left( \sum\limits_{\delta =1}^{\infty }\sigma _{\delta
}^{2}\right) ^{2}}{(1-a_{1}^{2})^{4}}\frac{(3+17a_{1}^{2})}{n}.
\end{eqnarray*}%
Now let us estimate
\begin{equation*}
T_{4,2,n}=\frac{4}{n}\sum_{i,j=1}^{n}\left\langle f_{i}\otimes
_{1}f_{j},f_{j}\otimes _{1}f_{i}\right\rangle _{L^{2}([0,1]^{2})}.
\end{equation*}%
For $j\geq i$, $x,y\in L^{2}([0,1])$, we have
\begin{equation}
(f_{i}\otimes
_{1}f_{j})(x,y)=a_{1}^{(j-i)}\sum\limits_{k=1}^{i}a_{1}^{2(i-k)}\sum%
\limits_{\delta =1}^{\infty }\sigma _{\delta }^{2}(h_{k,\delta }^{\otimes
2})(x,y).  \label{ps}
\end{equation}%
So, for $j\geq i$,
\begin{equation*}
\left\langle f_{i}\otimes _{1}f_{j},f_{j}\otimes _{1}f_{i}\right\rangle
_{L^{2}([0,1]^{2})}=\int_{[0,1]^{2}}(f_{i}\otimes
_{1}f_{j})^{2}(x,y)dxdy=a_{1}^{2(j-i)}\left( \sum_{\delta =1}^{\infty
}\sigma _{\delta }^{4}\right) \times \left( \frac{1-a_{1}^{4i}}{1-a_{1}^{4}}%
\right) .
\end{equation*}%
Therefore,
\begin{align*}
\left\vert T_{4,2,n}-\frac{4\sum\limits_{\delta =1}^{\infty }\sigma _{\delta
}^{4}}{(1-a_{1}^{2})^{2}}\right\vert & =\left\vert \frac{4\sum\limits_{%
\delta =1}^{\infty }\sigma _{\delta }^{4}}{(1-a_{1}^{4})}\frac{1}{n}%
\sum_{i=1}^{n}(1-a_{1}^{4i})+\frac{8}{n}\frac{\sum\limits_{\delta
=1}^{\infty }\sigma _{\delta }^{4}}{(1-a_{1}^{4})}%
\sum_{i=1}^{n-1}(1-a_{1}^{4i})\sum_{j=i+1}^{n}a_{1}^{2(j-i)}-\frac{%
4\sum\limits_{\delta =1}^{\infty }\sigma _{\delta }^{4}}{(1-a_{1}^{2})^{2}}%
\right\vert \\
& \leqslant \left\vert -\frac{4\sum\limits_{\delta =1}^{\infty }\sigma
_{\delta }^{4}}{(1-a_{1}^{4})}\frac{1}{n}\sum_{i=1}^{n}a_{1}^{4i}\right\vert
+\left\vert \frac{8\sum\limits_{\delta =1}^{\infty }\sigma _{\delta
}^{4}a_{1}^{2}}{(1-a_{1}^{4})(1-a_{1}^{2})}\frac{1}{n}\sum_{i=1}^{n}\left(
(1-a_{1}^{4i})(1-a_{1}^{2(n-i)})-1\right) \right\vert \\
& \leqslant \frac{4\sum\limits_{\delta =1}^{\infty }\sigma _{\delta }^{4}}{%
(1-a_{1}^{4})^{2}}\frac{1}{n}+\frac{24a_{1}^{2}\sum\limits_{\delta
=1}^{\infty }\sigma _{\delta }^{4}}{(1-a_{1}^{4})(1-a_{1}^{2})^{2}}\frac{1}{n%
} \\
& =\frac{4\sum\limits_{\delta =1}^{\infty }\sigma _{\delta }^{4}}{%
(1-a_{1}^{4})^{2}(1-a_{1}^{2})}\frac{\left[ 1+a_{1}^{2}(5+6a_{1}^{2})\right]
}{n},
\end{align*}%
which completes the proof.
\end{proof}

To get a sense of how the two terms $T_{2,n}$ and $T_{4,n}$ compare to each
other, we propose the following example, which shows that, despite one's
best efforts, one should not expect either of these two terms to dominate
the other.

\begin{remark}
In the AR(1) model (\ref{ARmodel}) with chi-squared white noise, i.e. when $%
\sigma _{1}=\sigma $ and $\sigma _{\delta }=0$ for all $\delta \geq 2$, one
can try to compare the two formulas for the asymptotic variances of $T_{2,n}$
and $T_{4,n}$. Avoiding the situation where $\left\vert a_{1}\right\vert $
is very close to $1$, assuming for instance $|a_{1}|<2^{-1/2}$, so that $%
1-a_{1}^{2}>1/2$, when $n$ is large, we have
\begin{equation*}
Var(T_{2,n})\sim \frac{1}{n}\frac{32\sigma ^{4}}{(1-a_{1}^{2})^{2}}\sim
4\times (1-a_{1}^{2})\times Var(T_{4,n})>4\times \frac{1}{2}\times
Var(T_{4,n})=2\times Var(T_{4,n}).
\end{equation*}%
Therefore the sequence $T_{4,n}$ can be made to have a variance which is
significantly smaller that the one of $T_{2,n}$ in this case, but both of
them converge to zero at the same speed $n^{-1}$.
\end{remark}

Using the orthogonality between $T_{2,n}$ and $T_{4,n}$, Proposition \ref{l1}
and Proposition \ref{l2}, we conclude the following.

\begin{theorem}
\label{asymptoticvariance} Under Assumption (\ref{assumption}), with $Q_{n}$
as in (\ref{Qn}), for large $n$,
\begin{equation*}
\left\vert n~Var(Q_{n})-(l_{1}+l_{2})\right\vert \leqslant \frac{\left(
C_{1}+C_{2}\right) }{n},
\end{equation*}%
and in particular the asymptotic variance of $Q_{n}$ is
\begin{eqnarray*}
&&\lim\limits_{n\rightarrow \infty }n~Var(Q_{n})=l_{1}+l_{2} \\
&=&\frac{36}{(1-a_{1}^{2})^{2}}\left( \sum\limits_{\delta =1}^{\infty
}\sigma _{\delta }^{4}\right) +\frac{4(1+a_{1}^{2})}{(1-a_{1}^{2})^{3}}%
\left( \sum\limits_{\delta =1}^{\infty }\sigma _{\delta }^{2}\right) ^{2}
\end{eqnarray*}%
where $C_{1}$, $l_{1}$, and $C_{2}$, $l_{2}$ are given respectively in (\ref%
{C_{1}}), (\ref{l_{1}}), (\ref{C_{2}}) and (\ref{l_{2}}).
\end{theorem}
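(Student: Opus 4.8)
The plan is to exploit the Wiener chaos decomposition already established in (\ref{DecompQn}). There $Q_{n}-\mathbf{E}[Q_{n}]=T_{4,n}+T_{2,n}$, where $T_{4,n}=I_{4}^{W}(\cdot)$ lies in the fourth Wiener chaos and $T_{2,n}=I_{2}^{W}(\cdot)$ lies in the second Wiener chaos. Since multiple Wiener integrals of different orders are mutually orthogonal in $L^{2}(\Omega)$ and both terms are centered, the cross term $\mathbf{E}[T_{4,n}T_{2,n}]$ vanishes, so one has the exact identity
\[
Var(Q_{n})=\mathbf{E}\big[(Q_{n}-\mathbf{E}[Q_{n}])^{2}\big]=\mathbf{E}[T_{4,n}^{2}]+\mathbf{E}[T_{2,n}^{2}],
\]
and hence, multiplying by $n$, $n\,Var(Q_{n})=\mathbf{E}[(\sqrt{n}\,T_{4,n})^{2}]+\mathbf{E}[(\sqrt{n}\,T_{2,n})^{2}]$. (Finiteness of all these quantities is not an issue: $\tilde{Y}_{i}=I_{2}^{W}(f_{i})\in L^{p}(\Omega)$ for every $p$ by hypercontractivity, so $Q_{n}\in L^{2}(\Omega)$.)

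Next I would simply invoke Proposition \ref{l1} and Proposition \ref{l2}: the former gives $\big|\mathbf{E}[(\sqrt{n}\,T_{2,n})^{2}]-l_{1}\big|\leqslant C_{1}/n$ and the latter gives $\big|\mathbf{E}[(\sqrt{n}\,T_{4,n})^{2}]-l_{2}\big|\leqslant C_{2}/n$. Summing these two inequalities and applying the triangle inequality to the identity from the previous paragraph yields $\big|n\,Var(Q_{n})-(l_{1}+l_{2})\big|\leqslant (C_{1}+C_{2})/n$, which is precisely the quantitative bound claimed; letting $n\rightarrow\infty$ then gives $\lim_{n\rightarrow\infty}n\,Var(Q_{n})=l_{1}+l_{2}$.

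The only remaining step is arithmetic bookkeeping to put $l_{1}+l_{2}$ in the stated closed form. Writing $S_{2}:=\sum_{\delta\geq1}\sigma_{\delta}^{2}$ and $S_{4}:=\sum_{\delta\geq1}\sigma_{\delta}^{4}$, formulas (\ref{l_{1}}) and (\ref{l_{2}}) read $l_{1}=32S_{4}/(1-a_{1}^{2})^{2}$ and $l_{2}=\frac{4}{(1-a_{1}^{2})^{2}}\big(S_{4}+\frac{1+a_{1}^{2}}{1-a_{1}^{2}}S_{2}^{2}\big)$, so the $S_{4}$-terms combine to $36S_{4}/(1-a_{1}^{2})^{2}$ and the $S_{2}^{2}$-term equals $4(1+a_{1}^{2})S_{2}^{2}/(1-a_{1}^{2})^{3}$, giving the displayed expression for $l_{1}+l_{2}$.

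I do not expect any genuine obstacle here: essentially the whole difficulty of this theorem has already been discharged in Propositions \ref{l1} and \ref{l2}, whose proofs carry out the delicate contraction computations and the summation of the geometric series. The single point that warrants an explicit sentence is the orthogonality of the chaos components — so that variances add with no cross term — which follows at once from the isometry property (\ref{Isometryprop}) and the orthogonality of multiple integrals recalled in Section \ref{Prelim}.
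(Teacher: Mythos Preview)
Your argument is correct and is exactly the paper's own approach: the theorem is stated as an immediate consequence of the orthogonality of $T_{2,n}$ and $T_{4,n}$ together with Propositions \ref{l1} and \ref{l2}, and your write-up simply spells out that deduction (plus the arithmetic for $l_{1}+l_{2}$).
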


\begin{remark}
~

\begin{itemize}
\item From the previous theorem, we notice that for $n$ large, and fixed
values of the noise scale parameter family $\left\{ \sigma _{\delta },\delta
\geq 1\right\} $, the variance of $\sqrt{n}Q_{n}$ has high values when $%
|a_{1}|$ is close to 1, and approaches
\begin{equation*}
36\left( \sum\limits_{\delta =1}^{\infty }\sigma _{\delta }^{4}\right)
+4\left( \sum\limits_{\delta =1}^{\infty }\sigma _{\delta }^{2}\right) ^{2}
\end{equation*}%
when $\left\vert a_{1}\right\vert $ is small.

\item The previous theorem also shows that one can obtain other asymptotics
depending on the relation between $a_{1}$ and the family $\left\{ \sigma
_{\delta },\delta \geq 1\right\} $. For instance, when $|a_{1}|$ is close to
1, which is the limit of fast mean reversion, one can avoid an explosion of $%
Q_{n}$'s asymptotic variance by scaling the variance parameters
appropriately, leading to a fast-mean reversion and small noise regime.
Letting $1/\alpha :=1-a_{1}^{2}$, where $\alpha $ is interpreted as a rate
of mean reversion, one would only need to ensure that $\sum \sigma _{\delta
}^{4}=O\left( \alpha ^{-2}\right) $ and $\sum \sigma _{\delta }^{2}=O\left(
\alpha ^{-3/2}\right) $. In the example where there is a single non-zero
value $\sigma $, for instance, we would obtain for large $\alpha $,
\begin{equation*}
n\times Var(Q_{n})\sim 36\alpha ^{2}\sigma ^{4}+8\alpha ^{3}\sigma ^{4};
\end{equation*}%
here the second term dominates, and as $\alpha \rightarrow \infty $,
assuming $\alpha ^{3}\sigma ^{4}$ remains bounded, we would get an
asymptotic variance of $8\lim_{\alpha \rightarrow \infty }\alpha ^{3}\sigma
^{4}$ if the limit exists.
\end{itemize}
\end{remark}

\section{Berry-Esséen bound for the asymptotic normality of the
quadratic-variation\label{BerryEsseen}}

In this section, we prove that the quadratic variation defined in (\ref{Qn})
is asymptotically normal and we estimate the speed of this convergence in
total variation distance, showing it is of the Berry-Esséen-type order $%
n^{-1/2}$. For this aim, we will need the following theorem, which estimates
the total variation distance to the normal of the standardized sum of
variables in the 2nd and 4th chaos.

\begin{theorem}
\label{newbound} Let $F=I_{2}(f)+I_{4}(g)$ where $f\in L_{s}^{2}([0,1]^{2})$
and $g\in L_{s}^{2}([0,1]^{4})$. Then
\begin{eqnarray}
d_{TV}\left( \frac{F}{\sqrt{EF^{2}}},\mathcal{N}(0,1)\right) &\leqslant &%
\frac{4}{EF^{2}}\left[ \sqrt{2}\left\Vert f\otimes _{1}f\right\Vert
_{L^{2}([0,1]^{2})}+2\sqrt{6!}\left\Vert g\otimes _{1}g\right\Vert
_{L^{2}([0,1]^{6})}+18\sqrt{4!}\left\Vert g\otimes _{2}g\right\Vert
_{L^{2}([0,1]^{4})}\right.  \notag \\
&&\left. +36\sqrt{2}\left\Vert g\otimes _{3}g\right\Vert _{L^{2}([0,1]^{2})}
\right. +9\sqrt{2}\sqrt{\left\langle f\otimes f,g\otimes _{2}g\right\rangle
_{L^{2}([0,1]^{4})}}  \notag \\
&&\left. +3\sqrt{4!}\sqrt{\left\Vert f\otimes _{1}f\right\Vert
_{L^{2}([0,1]^{2})}\left\Vert g\otimes _{3}g\right\Vert _{L^{2}([0,1]^{2})}}%
\right] .  \label{Inegalite1}
\end{eqnarray}%
Moreover, letting $R_{F}$ be the bracketed term on the right-hand side of (%
\ref{Inegalite1}), for any constant $\sigma >0$, we have
\begin{equation}
d_{TV}\left( \frac{F}{\sigma },\mathcal{N}(0,1)\right) \leqslant \frac{4}{%
\sigma ^{2}}R_{F}+2\left\vert 1-\frac{EF^{2}}{\sigma ^{2}}\right\vert .
\label{Inegalite2}
\end{equation}
\end{theorem}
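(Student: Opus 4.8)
The plan is to apply the Nourdin--Peccati bound (\ref{NPobs}) to the standardized variable $X:=F/\sqrt{EF^{2}}$ (one may assume $EF^{2}>0$, the statement being trivial otherwise); note $E[F]=0$ automatically since $I_{2}(f),I_{4}(g)$ are multiple integrals of positive order. Then $d_{TV}\big(X,\mathcal{N}(0,1)\big)\leqslant 2E\big|1-\langle DX,-DL^{-1}X\rangle _{\mathcal{H}}\big|=\tfrac{2}{EF^{2}}E\big|EF^{2}-\langle DF,-DL^{-1}F\rangle _{\mathcal{H}}\big|$, so everything reduces to an $L^{1}(\Omega )$ estimate of $\langle DF,-DL^{-1}F\rangle _{\mathcal{H}}$. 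For (\ref{Inegalite2}) I will use the same bound in the form valid for any mean-zero $X\in \mathbf{D}^{1,2}$ of arbitrary variance --- it needs only $E[X]=0$, since the integration-by-parts identity $E[Xg(X)]=E[g'(X)\langle DX,-DL^{-1}X\rangle _{\mathcal{H}}]$ does --- or, alternatively, deduce (\ref{Inegalite2}) from (\ref{Inegalite1}) via the triangle inequality, the scale invariance of $d_{TV}$, and the elementary estimate $d_{TV}\big(\mathcal{N}(0,v),\mathcal{N}(0,1)\big)\leqslant 2|v-1|$ taken at $v=EF^{2}/\sigma ^{2}$.

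First I would compute $\langle DF,-DL^{-1}F\rangle _{\mathcal{H}}$ and expand it in Wiener chaos. Using $D_{t}I_{2}(f)=2I_{1}(f(\cdot ,t))$, $D_{t}I_{4}(g)=4I_{3}(g(\cdot ,t))$, $-D_{t}L^{-1}I_{2}(f)=I_{1}(f(\cdot ,t))$ and $-D_{t}L^{-1}I_{4}(g)=I_{3}(g(\cdot ,t))$ gives
\begin{equation*}
\langle DF,-DL^{-1}F\rangle _{\mathcal{H}}=\int_{0}^{1}\Big[2I_{1}(f(\cdot ,t))^{2}+6I_{1}(f(\cdot ,t))I_{3}(g(\cdot ,t))+4I_{3}(g(\cdot ,t))^{2}\Big]\,dt.
\end{equation*}
Then I would apply the product formula (\ref{product}) to each summand, integrate in $t$, and use that for symmetric $\varphi ,\psi $ the kernel $\int_{0}^{1}\varphi (\cdot ,t)\otimes _{r}\psi (\cdot ,t)\,dt$ is, up to symmetrization, the contraction $\varphi \otimes _{r+1}\psi $ whose extra contracted pair of slots is the pair of $t$-arguments. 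Collecting by chaos order should give $\langle DF,-DL^{-1}F\rangle _{\mathcal{H}}=EF^{2}+A_{2}+A_{4}+A_{6}$, where the chaos-$0$ term equals $2\Vert f\Vert _{L^{2}([0,1]^{2})}^{2}+24\Vert g\Vert _{L^{2}([0,1]^{4})}^{2}=E[I_{2}(f)^{2}]+E[I_{4}(g)^{2}]=EF^{2}$, and
\begin{align*}
A_{2}&=I_{2}\!\big(2\,f\otimes _{1}f+18\,f\otimes _{2}g+72\,g\otimes _{3}g\big),\\
A_{4}&=6\,I_{4}(\widetilde{f\otimes _{1}g})+36\,I_{4}(\widetilde{g\otimes _{2}g}),\qquad A_{6}=4\,I_{6}(\widetilde{g\otimes _{1}g}),
\end{align*}
tildes denoting symmetrizations. (The self-check is that the chaos-$0$ part is exactly $EF^{2}$, so that only the mean-zero remainder $A_{2}+A_{4}+A_{6}$ survives in the estimate.)

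Next I would bound each $E|A_{q}|$ by the triangle inequality in $L^{1}(\Omega )$, then $E|I_{q}(h)|\leqslant (E[I_{q}(h)^{2}])^{1/2}=\sqrt{q!}\,\Vert h\Vert $, then $\Vert \widetilde{h}\Vert \leqslant \Vert h\Vert $: this yields $E|A_{6}|\leqslant 4\sqrt{6!}\,\Vert g\otimes _{1}g\Vert $, $E|A_{4}|\leqslant 6\sqrt{4!}\,\Vert f\otimes _{1}g\Vert +36\sqrt{4!}\,\Vert g\otimes _{2}g\Vert $, and $E|A_{2}|\leqslant 2\sqrt{2}\,\Vert f\otimes _{1}f\Vert +18\sqrt{2}\,\Vert f\otimes _{2}g\Vert +72\sqrt{2}\,\Vert g\otimes _{3}g\Vert $. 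The decisive move --- the one replacing a crude Cauchy--Schwarz (``Schwartz'') estimate of the \emph{mixed} contraction norms, and the one responsible in the application for a Berry--Ess\'{e}en rate $n^{-1/2}$ rather than $n^{-1/4}$ --- is the Fubini identity
\begin{equation}
\Vert f\otimes _{1}g\Vert _{L^{2}([0,1]^{4})}^{2}=\langle f\otimes _{1}f,\,g\otimes _{3}g\rangle _{L^{2}([0,1]^{2})},\qquad \Vert f\otimes _{2}g\Vert _{L^{2}([0,1]^{2})}^{2}=\langle f\otimes f,\,g\otimes _{2}g\rangle _{L^{2}([0,1]^{4})},  \label{newboundineq}
\end{equation}
obtained by regrouping the integrations over the contracted coordinates. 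Applying Cauchy--Schwarz only to the first of these (and leaving the second as an inner product), $\Vert f\otimes _{1}g\Vert \leqslant \sqrt{\Vert f\otimes _{1}f\Vert \,\Vert g\otimes _{3}g\Vert }$ and $\Vert f\otimes _{2}g\Vert =\sqrt{\langle f\otimes f,g\otimes _{2}g\rangle }$; substituting these into the bounds for $E|A_{2}|$ and $E|A_{4}|$ and summing gives $E|A_{2}|+E|A_{4}|+E|A_{6}|\leqslant 2R_{F}$, with $R_{F}$ exactly the bracketed quantity on the right of (\ref{Inegalite1}).

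To conclude, since the chaos-$0$ part of $\langle DF,-DL^{-1}F\rangle _{\mathcal{H}}$ is $EF^{2}$, for any $\sigma >0$ we have $\sigma ^{2}-\langle DF,-DL^{-1}F\rangle _{\mathcal{H}}=(\sigma ^{2}-EF^{2})-(A_{2}+A_{4}+A_{6})$, hence
\begin{equation*}
d_{TV}\!\left( \frac{F}{\sigma },\mathcal{N}(0,1)\right) \leqslant \frac{2}{\sigma ^{2}}\Big(|\sigma ^{2}-EF^{2}|+E|A_{2}+A_{4}+A_{6}|\Big)\leqslant \frac{4}{\sigma ^{2}}R_{F}+2\left\vert 1-\frac{EF^{2}}{\sigma ^{2}}\right\vert ,
\end{equation*}
which is (\ref{Inegalite2}); taking $\sigma ^{2}=EF^{2}$ recovers (\ref{Inegalite1}). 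The hardest part will be the second paragraph --- the patient combinatorics of the product formula and the $t$-integration, with every coefficient and contraction type tracked and the chaos-$0$ term verified to be $EF^{2}$ --- together with spotting that (\ref{newboundineq}) is the right way to treat the mixed contractions: a direct estimate of $\Vert f\otimes _{1}g\Vert $ and $\Vert f\otimes _{2}g\Vert $ would sacrifice a square-root power of the small parameter and ruin the sharp rate.
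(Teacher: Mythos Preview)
Your proof is correct and follows essentially the same route as the paper: compute $\langle DF,-DL^{-1}F\rangle_{\mathcal H}$, expand via the product formula into chaos components of orders $0,2,4,6$, check that the order-$0$ part is $EF^{2}$, and then bound each remaining piece by its $L^{2}$ norm, using the Fubini identities $\Vert f\otimes_{2}g\Vert^{2}=\langle f\otimes f,\,g\otimes_{2}g\rangle$ and $\Vert f\otimes_{1}g\Vert^{2}=\langle f\otimes_{1}f,\,g\otimes_{3}g\rangle$ for the mixed terms. The one cosmetic difference is that the paper starts from the variance bound $d_{TV}\leqslant \tfrac{2}{EF^{2}}\sqrt{\mathrm{Var}\,\langle DF,u\rangle}$ (the inequality (\ref{newboundineq}) in the paper, borrowed from \cite{NZ,NPY}) and then applies Minkowski to the $L^{2}$ norm, whereas you start from the $L^{1}$ bound (\ref{NPobs}) and apply the triangle inequality; since each chaos term is ultimately controlled by its $L^{2}$ norm anyway, both routes collapse to the same final estimate $\tfrac{4}{EF^{2}}R_{F}$.
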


\begin{proof}
We have $F=I_{2}(f)+I_{4}(g)$. Then
\begin{equation*}
-D_{t}L^{-1}F=I_{1}(f(.,t))+I_{3}(g(.,t))=:u(t).
\end{equation*}%
Thus, using $F=\delta (-DL^{-1})F$, we can write $F=\delta (u)$. Now we use
the result of a simple calculation, labeled as (9) in the preprint \cite{NPY}
(see also \cite{NZ}), to obtain
\begin{eqnarray}
d_{TV}\left( \frac{F}{\sqrt{EF^{2}}},\mathcal{N}(0,1)\right) &\leqslant &%
\frac{2}{EF^{2}}\sqrt{Var\left( \langle DF,u\rangle _{L^{2}([0,1])}\right) }
\notag \\
&=&\frac{2}{EF^{2}}\sqrt{E\left( \langle DF,u\rangle
_{L^{2}([0,1])}-EF^{2}\right) ^{2}},  \label{newboundineq}
\end{eqnarray}%
where the last equality comes from the duality relation $E\langle
DF,u\rangle _{L^{2}([0,1])}=E(F\delta (u))=EF^{2}$. The prior inequality
appears to be used in a more general context here than what is stated in
\cite[Eq. (9)]{NPY}, but an immediate inspection of its proof therein shows
that it applies to any situation where $F=\delta (u)$, using only general
results such as Stein's lemma, the chain rule for the Malliavin derivative $%
D $, and the duality between $D$ and $\delta $.

On the other hand, using the product formula (\ref{product}),
\begin{eqnarray*}
\langle DF,u\rangle _{L^{2}([0,1])} &=&\int_{0}^{1}u(t)D_{t}Fdt \\
&=&\int_{0}^{1}u(t)(2I_{1}(f(.,t))+4I_{3}(g(.,t)))dt \\
&=&\int_{0}^{1}2\left[ I_{1}(f(.,t))\right] ^{2}+4\left[ I_{3}(g(.,t))\right]
^{2}+6I_{1}(f(.,t))I_{3}(g(.,t))\,dt \\
&=&\int_{0}^{1}\left[ 2I_{2}(f(.,t)\otimes f(.,t))+2\Vert f(.,t)\Vert
_{L^{2}([0,1])}^{2}+4I_{6}(g(.,t)\otimes g(.,t))\right. \\
&&\left. +36I_{4}(g(.,t)\otimes _{1}g(.,t))+72I_{2}(g(.,t)\otimes
_{2}g(.,t))+24g(.,t)\otimes _{3}g(.,t)\right. \\
&&\left. +6I_{4}(f(.,t)\otimes g(.,t))+18I_{2}(f(.,t)\otimes _{1}g(.,t))
\right] \,dt.
\end{eqnarray*}%
Thus
\begin{equation*}
\langle DF,u\rangle
_{L^{2}([0,1])}-EF^{2}=%
\int_{0}^{1}I_{2}(h_{1}(.,t))+I_{4}(h_{2}(.,t))+I_{6}(h_{3}(.,t))\,dt,
\end{equation*}%
where
\begin{eqnarray*}
h_{1}(.t) &=&2f(.,t)\otimes f(.,t)+72g(.,t)\otimes
_{2}g(.,t)+18f(.,t)\otimes _{1}g(.,t) \\
h_{2}(.t) &=&36g(.,t)\otimes _{1}g(.,t)+6f(.,t)\otimes g(.,t) \\
h_{3}(.t) &=&4g(.,t)\otimes g(.,t).
\end{eqnarray*}%
Therefore, using Minkowski inequality,
\begin{eqnarray*}
\sqrt{E\left( \langle DF,u\rangle _{L^{2}([0,1])}-EF^{2}\right) ^{2}}
&\leqslant &\sqrt{2}\left\Vert \int_{0}^{1}h_{1}(.,t)\,dt\right\Vert
_{L^{2}([0,1]^{2})}+\sqrt{4!}\left\Vert
\int_{0}^{1}h_{2}(.,t)\,dt\right\Vert _{L^{2}([0,1]^{4})} \\
&&+\sqrt{6!}\left\Vert \int_{0}^{1}h_{3}(.,t)\,dt\right\Vert
_{L^{2}([0,1]^{6})}.
\end{eqnarray*}%
Furthermore,
\begin{eqnarray*}
\left\Vert \int_{0}^{1}h_{1}(.,t)\,dt\right\Vert _{L^{2}([0,1]^{2})}
&\leqslant &\left\Vert 2\int_{0}^{1}f(.,t)\otimes f(.,t)\,dt\right\Vert
_{L^{2}([0,1]^{2})}+\left\Vert 72\int_{0}^{1}g(.,t)\otimes
_{2}g(.,t)\,dt\right\Vert _{L^{2}([0,1]^{2})} \\
&&+\left\Vert 18\int_{0}^{1}f(.,t)\otimes _{1}g(.,t)\,dt\right\Vert
_{L^{2}([0,1]^{2})} \\
&=&2\left\Vert f\otimes _{1}f\right\Vert _{L^{2}([0,1]^{2})}+72\left\Vert
g\otimes _{3}g\right\Vert _{L^{2}([0,1]^{2})}+18\sqrt{\left\langle f\otimes
f,g\otimes _{2}g\right\rangle _{L^{2}([0,1]^{4})}}.
\end{eqnarray*}%
Also,
\begin{eqnarray*}
\left\Vert \int_{0}^{1}h_{2}(.,t)\,dt\right\Vert _{L^{2}([0,1]^{4})}
&\leqslant &\left\Vert 36\int_{0}^{1}g(.,t)\otimes _{1}g(.,t)\,dt\right\Vert
_{L^{2}([0,1]^{2})}+\left\Vert 6\int_{0}^{1}f(.,t)\otimes
g(.,t)\,dt\right\Vert _{L^{2}([0,1]^{4})} \\
&=&36\left\Vert g\otimes _{2}g\right\Vert _{L^{2}([0,1]^{4})}+6\sqrt{%
\left\langle f\otimes _{1}f,g\otimes _{3}g\right\rangle _{L^{2}([0,1]^{2})}}
\\
&\leqslant &36\left\Vert g\otimes _{2}g\right\Vert _{L^{2}([0,1]^{4})}+6%
\sqrt{\left\Vert f\otimes _{1}f\right\Vert _{L^{2}([0,1]^{2})}\left\Vert
g\otimes _{3}g\right\Vert _{L^{2}([0,1]^{2})}},
\end{eqnarray*}%
and
\begin{eqnarray*}
\left\Vert \int_{0}^{1}h_{3}(.,t)\,dt\right\Vert _{L^{2}([0,1]^{6})}
&=&\left\Vert 4\int_{0}^{1}g(.,t)\otimes g(.,t)\,dt\right\Vert
_{L^{2}([0,1]^{6})} \\
&=&4\left\Vert g\otimes _{1}g\right\Vert _{L^{2}([0,1]^{6})}.
\end{eqnarray*}%
As a consequence,
\begin{eqnarray*}
\sqrt{E\left( \langle DF,u\rangle _{L^{2}([0,1])}-EF^{2}\right) ^{2}}
&\leqslant &2\sqrt{2}\left\Vert f\otimes _{1}f\right\Vert
_{L^{2}([0,1]^{2})}+72\sqrt{2}\left\Vert g\otimes _{3}g\right\Vert
_{L^{2}([0,1]^{2})} \\
&&+18\sqrt{2}\sqrt{\left\langle f\otimes f,g\otimes _{2}g\right\rangle
_{L^{2}([0,1]^{4})}}+36\sqrt{4!}\left\Vert g\otimes _{2}g\right\Vert
_{L^{2}([0,1]^{4})} \\
&&+6\sqrt{4!}\sqrt{\left\Vert f\otimes _{1}f\right\Vert
_{L^{2}([0,1]^{2})}\left\Vert g\otimes _{3}g\right\Vert _{L^{2}([0,1]^{2})}}%
+4\sqrt{6!}\left\Vert g\otimes _{1}g\right\Vert _{L^{2}([0,1]^{6})}.
\end{eqnarray*}%
This, combined with (\ref{newboundineq}), establishes inequality (\ref%
{Inegalite1}).

For (\ref{Inegalite2}), we have by (\ref{NPobs})
\begin{align}
d_{TV}\left( \frac{F}{\sigma },\mathcal{N}(0,1)\right) & \leqslant \frac{2}{%
\sigma ^{2}}\mathbf{E}[|\sigma ^{2}-\left\langle DF,u\right\rangle
_{L^{2}([0,1])}|]  \notag \\
& \leqslant \frac{2}{\sigma ^{2}}\mathbf{E}[|\mathbf{E}[F^{2}]-\left\langle
DF,u\right\rangle _{L^{2}([0,1])}|]+2\left\vert 1-\frac{\mathbf{E}[F^{2}]}{%
\sigma ^{2}}\right\vert .  \label{Inegalite3}
\end{align}%
Inequality (\ref{Inegalite2}) follows using inequalities (\ref{Inegalite3})
and (\ref{Inegalite1}).
\end{proof}

We will now use Theorem \ref{newbound} to prove that the quadratic variation
$Q_{n}$ satisfies the following Berry-Esséen theorem.

\begin{remark}
\label{KeyRem}It turns out that, when applying Theorem \ref{newbound} to
estimate the speed of convergence in the CLT for $Q_{n}$, the term $\sqrt{%
\left\langle f\otimes f,g\otimes _{2}g\right\rangle _{L^{2}([0,1]^{4})}}$
cannot merely be bounded above via Schwarz's inequality. See Lemma \ref%
{LemmaC5} below and its proof. This is the key element which allows us to
obtain the Berry-Esséen speed $n^{-1/2}$ in the next theorem.
\end{remark}

\begin{theorem}
\label{convergencenormalQn} With $Q_{n}$ defined in (\ref{Qn}), under
Assumption (\ref{assumption}), we have for all $n\geq 1$
\begin{equation}
d_{TV}\left( \frac{\sqrt{n}(Q_{n}-\mathbf{E}[Q_{n}])}{\sqrt{l_{1}+l_{2}}},%
\mathcal{N}(0,1)\right) \leqslant \frac{C_{0}}{\sqrt{n}},  \label{bornsupQn}
\end{equation}%
where
\begin{equation*}
C_{0}:=\frac{4\sqrt{2}}{l_{1}+l_{2}}\left( \frac{1}{2\sqrt{2}}%
(C_{1}+C_{2})+C_{3}+36C_{4,3}+9C_{5}+36\sqrt{3}C_{4,2}+6\sqrt{3}%
C_{3}^{1/2}C_{4,3}^{1/2}+12\sqrt{10}C_{4,1}\right) ,
\end{equation*}%
where $l_{1}$, $l_{2}$, are defined in the previous section in (\ref{l_{1}}%
), (\ref{l_{2}}), and $C_{1}$, $C_{2}$, $C_{3}$, $C_{4,r}$, $r=1,2,3$ and $%
C_{5}$ are given in the lemmas below, respectively in (\ref{C_{1}}), (\ref%
{C_{2}}), (\ref{C_{3}}), (\ref{C_{4}}) and (\ref{C_{5}}).

In particular $\sqrt{n}(Q_{n}-\mathbf{E}[Q_{n}])$ is asymptotically
Gaussian, namely
\begin{equation*}
\lim\limits_{n\rightarrow \infty }\sqrt{n}\left( Q_{n}-\mathbf{E}%
[Q_{n}]\right) \overset{law}{\longrightarrow }\mathcal{N}(0,l_{1}+l_{2}).
\end{equation*}
\end{theorem}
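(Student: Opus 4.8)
The plan is to recognize $F:=\sqrt{n}\big(Q_n-\mathbf{E}[Q_n]\big)$ as a sum of a second- and a fourth-chaos variable, so that Theorem~\ref{newbound} applies directly. Scaling the decomposition (\ref{DecompQn}) by $\sqrt{n}$ gives $F=I_2^W(f)+I_4^W(g)$ with
\begin{equation*}
f:=\frac{4}{\sqrt{n}}\sum_{i=1}^{n}f_i\otimes_1 f_i,\qquad g:=\frac{1}{\sqrt{n}}\sum_{i=1}^{n}\widetilde{f_i\otimes f_i},
\end{equation*}
where $\widetilde{\cdot}$ denotes symmetrization; the kernel $f\in L_s^2([0,1]^2)$ is already symmetric since $f_i\otimes_1 f_i=\sum_{k=1}^{i}a_1^{2(i-k)}\sum_{\delta\ge1}\sigma_\delta^2\,h_{k,\delta}^{\otimes 2}$, and $g\in L_s^2([0,1]^4)$. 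Applying inequality (\ref{Inegalite2}) with $\sigma^2=l_1+l_2$ reduces the statement to two tasks: (i) controlling the variance discrepancy $2\big|1-\mathbf{E}[F^2]/(l_1+l_2)\big|$, and (ii) bounding each of the six contraction-norm quantities in $R_F$ from (\ref{Inegalite1}).

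Task (i) is immediate from Theorem~\ref{asymptoticvariance}: since $\mathbf{E}[F^2]=n\,Var(Q_n)$, we have $\big|\mathbf{E}[F^2]-(l_1+l_2)\big|\le (C_1+C_2)/n$, so this term is $O(1/n)\le O(1/\sqrt{n})$ and accounts for the $\tfrac{1}{2\sqrt2}(C_1+C_2)$ summand in $C_0$. Task (ii) is carried out in the lemmas below, by computations of the same flavour as in Propositions~\ref{l1} and~\ref{l2}: one expands $f$ and $g$ against the orthonormal system $\{h_{k,\delta}^{\otimes 2}\}$, uses $\langle h_{k,\delta},h_{k',\delta'}\rangle_{L^2([0,1])}=\delta_{k,k'}\delta_{\delta,\delta'}$ to annihilate most cross terms, and sums the surviving geometric series in $a_1^2$ and $a_1^4$. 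This yields $\|f\otimes_1 f\|_{L^2}\le C_3\,n^{-1/2}$, $\|g\otimes_r g\|\le C_{4,r}\,n^{-1/2}$ for $r=1,2,3$, and for the mixed radical $\sqrt{\|f\otimes_1 f\|\,\|g\otimes_3 g\|}\le (C_3C_{4,3})^{1/2}n^{-1/2}$. The symmetrization in $g$ forces one to expand $\widetilde{f_i\otimes f_i}$ over its distinct pairings and estimate each resulting contraction separately, but this does not change the order of magnitude.

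The delicate point — the heart of the argument, flagged in Remark~\ref{KeyRem} — is the cross term $\langle f\otimes f,\,g\otimes_2 g\rangle_{L^2([0,1]^4)}$, which enters (\ref{Inegalite1}) under a square root. A Schwarz bound would give only
\begin{equation*}
\langle f\otimes f,\,g\otimes_2 g\rangle\le\|f\otimes f\|\,\|g\otimes_2 g\|=\|f\|_{L^2}^2\,\|g\otimes_2 g\|=O(1)\cdot O(n^{-1/2})=O(n^{-1/2}),
\end{equation*}
so that $\sqrt{\langle f\otimes f,g\otimes_2 g\rangle}=O(n^{-1/4})$, which would only deliver a rate $n^{-1/4}$. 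Instead, Lemma~\ref{LemmaC5} evaluates this pairing directly: since $f$ lives in the ``diagonal'' subspace spanned by the $h_{k,\delta}^{\otimes 2}$, the inner product against $g\otimes_2 g$ collapses, after the orthogonality relations, to a single sum over the shared index $k$ weighted by a product of geometric factors, giving $\langle f\otimes f,g\otimes_2 g\rangle\le C_5^2/n$ and hence $\sqrt{\langle f\otimes f,g\otimes_2 g\rangle}\le C_5\,n^{-1/2}$. This cancellation is precisely what upgrades the speed from $n^{-1/4}$ to $n^{-1/2}$; producing it honestly is the only genuine obstacle, the remaining estimates being routine geometric-series bookkeeping of the type already done in Section~\ref{Asymp}.

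Finally, one collects task (i), the six $O(n^{-1/2})$ estimates from task (ii), and the prefactor $4/(l_1+l_2)$ in (\ref{Inegalite2}); gathering the numerical constants (and using $1/n\le 1/\sqrt n$ for $n\ge1$) yields $d_{TV}\big(F/\sqrt{l_1+l_2},\mathcal N(0,1)\big)\le C_0/\sqrt n$ with $C_0$ as in the statement. Since $d_{TV}\to 0$ implies convergence in distribution, it follows that $F/\sqrt{l_1+l_2}\overset{law}{\longrightarrow}\mathcal N(0,1)$, i.e.\ $\sqrt n(Q_n-\mathbf E[Q_n])\overset{law}{\longrightarrow}\mathcal N(0,l_1+l_2)$.
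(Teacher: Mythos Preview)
Your proposal is correct and follows essentially the same approach as the paper: decompose $\sqrt{n}(Q_n-\mathbf{E}[Q_n])$ as $I_2^W(g_{2,n})+I_4^W(g_{4,n})$, apply Theorem~\ref{newbound} (inequality (\ref{Inegalite2})) with $\sigma^2=l_1+l_2$, handle the variance discrepancy via Theorem~\ref{asymptoticvariance}, and bound each contraction norm by direct expansion against the orthonormal system $\{h_{k,\delta}\}$ and geometric summation; you also correctly pinpoint that the cross term $\langle f\otimes f,g\otimes_2 g\rangle$ must be computed directly (Lemma~\ref{LemmaC5}) rather than bounded by Schwarz, which is exactly the paper's key step flagged in Remark~\ref{KeyRem}.
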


\begin{proof}
Based on the decomposition of $\left( Q_{n}-\mathbf{E}(Q_{n})\right) $ given
in (\ref{DecompQn}), we have
\begin{equation*}
\sqrt{n}\left( Q_{n}-\mathbf{E}[Q_{n}]\right) =\sqrt{n}T_{2,n}+\sqrt{n}%
T_{4,n}=I_{2}^{W}(g_{2,n})+I_{4}^{W}(g_{4,n}),
\end{equation*}%
where
\begin{equation}
g_{2,n}:=\frac{{4}}{\sqrt{n}}\sum_{i=1}^{n}f_{i}\otimes _{1}f_{i}\text{ \ }%
\text{and}\text{ \ }g_{4,n}:=\frac{1}{\sqrt{n}}\sum_{i=1}^{n}f_{i}\otimes
f_{i}.  \label{kernels}
\end{equation}%
Applying Theorem \ref{newbound} to $\sqrt{n}\left( Q_{n}-\mathbf{E}%
[Q_{n}]\right) $, we get
\begin{align}
& d_{TV}\left( \frac{\sqrt{n}(Q_{n}-\mathbf{E}[Q_{n}])}{\sqrt{l_{1}+l_{2}}},%
\mathcal{N}(0,1)\right)  \notag \\
& \leqslant \frac{4}{l_{1}+l_{2}}\left[ \sqrt{2}\left\Vert g_{2,n}\otimes
_{1}g_{2,n}\right\Vert _{L^{2}([0,1]^{2})}+36\sqrt{2}\left\Vert
g_{4,n}\otimes _{3}g_{4,n}\right\Vert _{L^{2}([0,1]^{2})}\right.  \notag \\
& \quad +9\sqrt{2}\sqrt{\left\langle g_{2,n}\otimes g_{2,n},g_{4,n}\otimes
_{2}g_{4,n}\right\rangle _{L^{2}([0,1]^{4})}}  \notag \\
& \quad \left. +18\sqrt{4!}\left\Vert g_{4,n}\otimes _{2}g_{4,n}\right\Vert
_{L^{2}([0,1]^{4})}+3\sqrt{4!}\sqrt{\left\Vert g_{2,n}\otimes
_{1}g_{2,n}\right\Vert _{L^{2}([0,1]^{2})}\left\Vert g_{4,n}\otimes
_{3}g_{4,n}\right\Vert _{L^{2}([0,1]^{2})}}\right.  \notag \\
& \quad \left. +2\sqrt{6!}\left\Vert g_{4,n}\otimes _{1}g_{4,n}\right\Vert
_{L^{2}([0,1]^{6})}\right] +2\left\vert 1-\frac{\mathbf{E}\left[ \left(
\sqrt{n}\left( Q_{n}-\mathbf{E}[Q_{n}]\right) \right) ^{2}\right] }{%
(l_{1}+l_{2})}\right\vert .  \label{decom.interm}
\end{align}%
We study first the contractions of the kernels $g_{2,n}$ and $g_{4,n}$ given
in (\ref{kernels}) and we prove that they satisfy the following lemmas.

\begin{lemma}
\label{cong2} If Assumption (\ref{assumption}) holds, the kernel $g_{2,n}$
defined in (\ref{kernels}) satisfies%
\begin{equation}
\Vert g_{2,n}\otimes _{1}g_{2,n}\Vert _{L^{2}([0,1]^{2})}\leqslant \frac{%
C_{3}}{\sqrt{n}},  \label{C_{3}}
\end{equation}%
where $C_{3}:=\sqrt{4!\frac{4^{4}}{n}\left( \sum_{\delta =1}^{\infty }\sigma
_{\delta }^{8}\right) \frac{1}{1-a_{1}^{8}}\left( \frac{1}{1-a_{1}^{2}}%
\right) ^{3}}$.
\end{lemma}

\begin{proof}
We have
\begin{align*}
\| g_{2,n} \otimes_{1} g_{2,n}\|^{2}_{L^{2}([0,1]^{2})}& = \frac{4^{4}}{n^{2}%
} \|\sum_{i,j=1}^{n} (f_{i} \otimes_{1}f_{i}) \otimes_{1} (f_{j}
\otimes_{1}f_{j}) \|^{2}_{L^{2}([0,1]^{2})} \\
&= \frac{4^{4}}{n^{2}} \sum_{i,j,k,l=1}^{n} \left \langle (f_{i}
\otimes_{1}f_{i}) \otimes_{1} (f_{j} \otimes_{1}f_{j}), (f_{k}
\otimes_{1}f_{k}) \otimes_{1} (f_{l} \otimes_{1}f_{l}) \right
\rangle_{L^{2}([0,1]^{2})}.
\end{align*}
Moreover, by above calculations and (\ref{assumption})
\begin{equation*}
(f_{i} \otimes_{1}f_{i})(x,y) = \sum\limits_{k=1}^{i} a_{1}^{2(i-k)}
\sum\limits_{\delta = 1}^{\infty} \sigma_{\delta}^{2}
h_{k,\delta}^{\otimes_{2}}(x,y).
\end{equation*}
Hence
\begin{align*}
& (f_{i} \otimes_{1}f_{i}) \otimes_{1} (f_{j} \otimes_{1}f_{j})(x,y) \\
& = \int_{[0,1]} (f_{i} \otimes_{1}f_{i})(x,t) (f_{j} \otimes_{1}f_{j})(y,t)
dt \\
& = \sum\limits_{k_{1}=1}^{i} a_{1}^{2(i-k_{1})} \sum\limits_{k_{2}=1}^{j}
a_{1}^{2(j-k_{2})} \sum\limits_{\delta_1,\delta_2 =1 }^{\infty}
\sigma_{\delta_1}^{2} \sigma_{\delta_2}^{2} \int_{[0,1]}
h_{k_{1},\delta_{1}}(x)h_{k_{1},\delta_{1}}(t) h_{k_{2},\delta_{2}}(y)
h_{k_{2},\delta_{2}}(t) dt \\
& = \sum\limits_{k_{1}=1}^{i} a_{1}^{2(i-k_{1})} \sum\limits_{k_{2}=1}^{j}
a_{1}^{2(j-k_{2})} \sum\limits_{\delta =1}^{\infty} \sigma_{\delta}^{4}
(h_{k_{1},\delta} \otimes h_{k_{2},\delta})(x,y) \delta_{k_{1},k_{2}} \\
& = \sum\limits_{k_{1}=1}^{i \wedge j} a_{1}^{2(i+j-2k_{1})} \left(
\sum\limits_{\delta =1}^{+ \infty} \sigma_{\delta}^{4} \right)
(h_{k_{1},\delta} \otimes h_{k_{1},\delta})(x,y).
\end{align*}
Similarly
\begin{equation*}
(f_{k} \otimes_{1}f_{k}) \otimes_{1} (f_{l} \otimes_{1}f_{l})(x,y) =
\sum\limits_{k_{2}=1}^{k \wedge l} a_{1}^{2(k+l-2k_{2})} \left(
\sum\limits_{\delta =1}^{+ \infty} \sigma_{\delta}^{4} \right)
(h_{k_{2},\delta} \otimes h_{k_{2},\delta})(x,y).
\end{equation*}
Therefore
\begin{align*}
& \left \langle (f_{i} \otimes_{1}f_{i}) \otimes_{1} (f_{j}
\otimes_{1}f_{j}), (f_{k} \otimes_{1}f_{k}) \otimes_{1} (f_{l}
\otimes_{1}f_{l}) \right \rangle_{L^{2}([0,1]^{2})} \\
& = \int_{[0,1]^{2}} \left((f_{i} \otimes_{1}f_{i}) \otimes_{1} (f_{j}
\otimes_{1}f_{j})\right)(x,y)\left((f_{k} \otimes_{1}f_{k}) \otimes_{1}
(f_{l} \otimes_{1}f_{l})\right)(x,y) dx dy \\
& = \sum\limits_{m=1}^{i \wedge j \wedge k \wedge l} a_{1}^{2(i+j+k+l-4m)}
\left( \sum\limits_{\delta =1}^{\infty} \sigma_{\delta}^{8}\right).
\end{align*}
Consequently
\begin{eqnarray*}
\| g_{2,n} \otimes_{1} g_{2,n}\|^{2}_{L^{2}([0,1]^{2})} & = &\frac{4^4}{n^2}%
\sum_{i,j,k,l=1}^n\sum_{r=1}^{i\wedge j\wedge k\wedge l}
a_{1}^{2(i+j+k+l-4r)} \sum_{\delta = 1}^{\infty} \sigma_{\delta}^8 \\
&\leqslant&4!\frac{4^4}{n^2}\left(\sum_{\delta = 1}^{\infty}
\sigma_{\delta}^8\right)\sum_{1\leqslant i\leqslant j\leqslant k\leqslant
l\leqslant n}\sum_{r=1}^{i} a_{1}^{2(i+j+k+l-4r)} \\
&\leqslant&4!\frac{4^4}{n^2}\left(\sum_{\delta = 1}^{\infty}
\sigma_{\delta}^8\right)\sum_{1\leqslant i\leqslant j\leqslant k\leqslant
l\leqslant n} a_{1}^{2(j+k+l-3i)} \frac{1-a_1^{8i}}{1-a_1^{8}} \\
&\leqslant&4!\frac{4^4}{n^2}\left(\sum_{\delta = 1}^{\infty}
\sigma_{\delta}^8\right)\frac{1}{1-a_1^{8}}\sum_{1\leqslant i\leqslant
j\leqslant k\leqslant l\leqslant n} a_{1}^{2(j+k+l-3i)} \\
&\leqslant&4!\frac{4^4}{n}\left(\sum_{\delta = 1}^{\infty}
\sigma_{\delta}^8\right)\frac{1}{1-a_1^{8}}\sum_{ k_1,k_2,k_3=0}^{n}
a_{1}^{2(k_1+k_2+k_3)} \\
&=&4!\frac{4}{n}\left(\sum_{\delta = 1}^{\infty} \sigma_{\delta}^8\right)%
\frac{1}{1-a_1^{8}}\left(\sum_{k_1=0}^{n} a_{1}^{2k_1} \right)^3 \\
&\leqslant&4!\frac{4^4}{n}\left(\sum_{\delta = 1}^{\infty}
\sigma_{\delta}^8\right)\frac{1}{1-a_1^{8}}\left(\frac{1}{1-a_1^{2}}%
\right)^3,
\end{eqnarray*}
where we used the change of variables $k_1=j-i,\, k_2=k-i,\, k_3=l-i$. The
desired result therefore follows.
\end{proof}

\begin{lemma}
If Assumption (\ref{assumption}) holds, for every $r=1,2,3,$ the kernel $%
g_{4,n}$ defined in (\ref{kernels}) satisfies%
\begin{equation}
\Vert g_{4,n}{\otimes _{r}}g_{4,n}\Vert _{L^{2}([0,1]^{(8-2r)})}\leqslant
\frac{{C}_{4,r}}{\sqrt{n}},  \label{C_{4}}
\end{equation}%
where
\begin{equation*}
{C}_{4,r}:=\left\{
\begin{array}{lll}
\sqrt{4!\left( \sum\limits_{\delta =1}^{\infty }\sigma _{\delta }^{2}\right)
^{2}\left( \sum\limits_{\delta =1}^{\infty }\sigma _{\delta }^{4}\right)
\left( \frac{1}{(1-a_{1}^{2})(1-a_{1}^{4})}\right) ^{3}} & \mbox{ if }\text{
\ }r=1, &  \\
~~ &  &  \\
\sqrt{{4!}{\left( \sum\limits_{\delta =1}^{\infty }\sigma _{\delta
}^{2}\right) ^{4}}\left( \frac{1}{1-a_{1}^{2}}\right) ^{7}} & \mbox{ if }%
\text{ \ }r=2, &  \\
~~ &  &  \\
\sqrt{{4!}\left( \sum\limits_{\delta =1}^{\infty }\sigma _{\delta
}^{2}\right) ^{2}\left( \sum\limits_{\delta =1}^{\infty }\sigma _{\delta
}^{4}\right) \frac{1}{(1-a_{1}^{4})^{2}}\frac{1}{(1-a_{1}^{2})^{4}}} & %
\mbox{ if }\text{ \ }r=3. &
\end{array}%
\right.
\end{equation*}
\end{lemma}

\begin{proof}
For $r=1,2,3$, we have
\begin{align*}
\Vert g_{4,n}{\otimes _{r}}g_{4,n}\Vert _{L^{2}([0,1]^{2(4-r)})}^{2}& =\frac{%
1}{n^{2}}\Vert \sum_{i,j=1}^{n}(f_{i}\otimes f_{i})\tilde{\otimes _{r}}%
(f_{j}\otimes f_{j})\Vert _{L^{2}([0,1]^{2(4-r)})}^{2} \\
& \leqslant \frac{1}{n^{2}}\Vert \sum_{i,j=1}^{n}(f_{i}\otimes f_{i}){%
\otimes _{r}}(f_{j}\otimes f_{j})\Vert _{L^{2}([0,1]^{2(4-r)})}^{2} \\
& =\frac{1}{n^{2}}\sum_{i,j,k,l=1}^{n}\left\langle (f_{i}\otimes f_{i}){%
\otimes _{r}}(f_{j}\otimes f_{j}),(f_{k}\otimes f_{k}){\otimes _{r}}%
(f_{l}\otimes f_{l})\right\rangle _{L^{2}([0,1]^{2(4-r)})}.
\end{align*}

For $r=1$, we get
\begin{align}
\Vert g_{4,n}{\otimes _{1}}g_{4,n}\Vert _{L^{2}([0,1]^{6})}^{2}& \leqslant
\frac{1}{n^{2}}\sum_{i,j,k,l=1}^{n}\left\langle (f_{i}\otimes f_{i}){\otimes
_{1}}(f_{j}\otimes f_{j}),(f_{k}\otimes f_{k}){\otimes _{1}}(f_{l}\otimes
f_{l})\right\rangle _{L^{2}([0,1]^{6})}  \notag \\
& =\frac{1}{n^{2}}\sum_{i,j,k,l=1}^{n}\left\langle f_{i},f_{k}\right\rangle
_{L^{2}([0,1]^{2})}\left\langle f_{j},f_{l}\right\rangle
_{L^{2}([0,1]^{2})}\left\langle f_{i}\otimes _{1}f_{j},f_{k}\otimes
_{1}f_{l}\right\rangle _{L^{2}([0,1]^{2})}  \notag \\
& =\frac{4!}{n^{2}}\sum_{1\leqslant i\leqslant j\leqslant k\leqslant
l\leqslant n}\left\langle f_{i},f_{k}\right\rangle
_{L^{2}([0,1]^{2})}\left\langle f_{j},f_{l}\right\rangle
_{L^{2}([0,1]^{2})}\left\langle f_{i}\otimes _{1}f_{j},f_{k}\otimes
_{1}f_{l}\right\rangle _{L^{2}([0,1]^{2})}.  \label{estimg4}
\end{align}%
By (\ref{assumption}), for all $1\leqslant i,k\leqslant n$,

\begin{equation}
\left\langle f_{i},f_{k}\right\rangle
_{L^{2}([0,1]^{2})}=\sum\limits_{m=1}^{i\wedge k}a_{1}^{i+k-2m}\times \left(
\sum\limits_{\delta =1}^{\infty }\sigma _{\delta }^{2}\right) .
\label{produitscalaire}
\end{equation}%
Similarly, for all $1\leqslant j,l\leqslant n$,
\begin{equation*}
\left\langle f_{j},f_{l}\right\rangle
_{L^{2}([0,1]^{2})}=\sum\limits_{m=1}^{i\wedge k}a_{1}^{j+l-2m}\times \left(
\sum\limits_{\delta =1}^{\infty }\sigma _{\delta }^{2}\right) .
\end{equation*}%
On the other hand, for all $1\leqslant i,j\leqslant n$
\begin{eqnarray}
(f_{i}\otimes _{1}f_{j})(x,y) &=&\int_{0}^{1}f_{i}(x,t)f_{j}(y,t)dt  \notag
\\
&=&\sum_{m=1}^{i\wedge j}a_{1}^{i+j-2m}\sum_{\delta =1}^{\infty }\sigma
_{\delta }^{2}h_{m,\delta }^{\otimes 2}(x,y).  \label{contr1}
\end{eqnarray}%
Hence, by (\ref{assumption}), for all $1\leqslant i,j,k,l\leqslant n$,
\begin{equation*}
\left\langle f_{i}\otimes _{1}f_{j},f_{k}\otimes _{1}f_{l}\right\rangle
_{L^{2}([0,1]^{2})}=\sum\limits_{m=1}^{i\wedge j\wedge k\wedge
l}a_{1}^{(i+j+l+k-4m)}\times \left( \sum\limits_{\delta =1}^{\infty }\sigma
_{\delta }^{4}\right) .
\end{equation*}%
Therefore, from (\ref{estimg4}) and above calculations, we have
\begin{align*}
& \Vert g_{4,n}{\otimes _{1}}g_{4,n}\Vert _{L^{2}([0,1]^{6})}^{2} \\
& \leqslant \frac{4!}{(1-a_{1}^{2})^{2}}\left( \sum\limits_{\delta
=1}^{\infty }\sigma _{\delta }^{2}\right) ^{2}\left( \sum\limits_{\delta
=1}^{\infty }\sigma _{\delta }^{4}\right) \frac{1}{(1-a_{1}^{4})}\frac{1}{%
n^{2}}\sum_{1\leqslant i\leqslant j\leqslant k\leqslant l\leqslant
n}a_{1}^{2(k-i)}a_{1}^{2(l-i)}(1-a_{1}^{2i})(1-a_{1}^{4i})(1-a_{1}^{2j}) \\
& \leqslant \frac{4!}{(1-a_{1}^{2})^{2}}\left( \sum\limits_{\delta
=1}^{\infty }\sigma _{\delta }^{2}\right) ^{2}\left( \sum\limits_{\delta
=1}^{\infty }\sigma _{\delta }^{4}\right) \frac{1}{(1-a_{1}^{4})}\frac{1}{n}%
\sum\limits_{k_{1},k_{2},k_{3}=0}^{n}a_{1}^{2(2k_{1}+2k_{2}+k_{3})} \\
& =\frac{4!}{(1-a_{1}^{2})^{2}}\frac{1}{(1-a_{1}^{4})}\left(
\sum\limits_{\delta =1}^{\infty }\sigma _{\delta }^{2}\right) ^{2}\left(
\sum\limits_{\delta =1}^{\infty }\sigma _{\delta }^{4}\right) \frac{1}{n}%
\left( \sum\limits_{r_{1}=0}^{n}a_{1}^{4r_{1}}\right) ^{2}\left(
\sum\limits_{r_{2}=0}^{n}a_{1}^{2r_{2}}\right) \\
& \leqslant {4!}\left( \sum\limits_{\delta =1}^{\infty }\sigma _{\delta
}^{2}\right) ^{2}\left( \sum\limits_{\delta =1}^{\infty }\sigma _{\delta
}^{4}\right) \left( \frac{1}{(1-a_{1}^{2})(1-a_{1}^{4})}\right) ^{3}\times
\frac{1}{n},
\end{align*}%
where we used the change of variables $k_{1}=j-i$, $k_{2}=k-j$, $k_{3}=l-k$.

For $r=2$, we have
\begin{align*}
\Vert g_{4,n}{\otimes _{2}}g_{4,n}\Vert _{L^{2}([0,1]^{4})}^{2}& \leqslant
\frac{1}{n^{2}}\sum_{i,j,k,l=1}^{n}\left\langle (f_{i}\otimes f_{i}){\otimes
_{2}}(f_{j}\otimes f_{j}),(f_{k}\otimes f_{k}){\otimes _{2}}(f_{l}\otimes
f_{l})\right\rangle _{L^{2}([0,1]^{4})} \\
& =\frac{1}{n^{2}}\sum_{i,j,k,l=1}^{n}\left\langle f_{i},f_{j}\right\rangle
_{L^{2}([0,1]^{2})}\left\langle f_{k},f_{l}\right\rangle
_{L^{2}([0,1]^{2})}\left\langle f_{i},f_{k}\right\rangle
_{L^{2}([0,1]^{2})}\left\langle f_{j},f_{l}\right\rangle _{L^{2}([0,1]^{2})}
\\
& =\frac{4!}{n^{2}}\sum_{1\leqslant i\leqslant j\leqslant k\leqslant
l\leqslant n}\left\langle f_{i},f_{j}\right\rangle
_{L^{2}([0,1]^{2})}\left\langle f_{k},f_{l}\right\rangle
_{L^{2}([0,1]^{2})}\left\langle f_{i},f_{k}\right\rangle
_{L^{2}([0,1]^{2})}\left\langle f_{j},f_{l}\right\rangle _{L^{2}([0,1]^{2})}.
\end{align*}%
Hence, by (\ref{produitscalaire}) and (\ref{assumption}), we get
\begin{align*}
\Vert g_{4,n}{\otimes _{2}}g_{4,n}\Vert _{L^{2}([0,1]^{4})}^{2}& \leqslant
\frac{4!}{(1-a_{1}^{2})^{4}}\frac{\left( \sum\limits_{\delta =1}^{\infty
}\sigma _{\delta }^{2}\right) ^{4}}{n^{2}}\sum_{1\leqslant i\leqslant
j\leqslant k\leqslant l\leqslant
n}a_{1}^{2(l-i)}(1-a_{1}^{2i})^{2}(1-a_{1}^{2k})(1-a_{1}^{2j}) \\
& \leqslant \frac{4!}{(1-a_{1}^{2})^{4}}\frac{\left( \sum\limits_{\delta
=1}^{\infty }\sigma _{\delta }^{2}\right) ^{4}}{n}\sum%
\limits_{k_{1},k_{2},k_{3}=0}^{n}a_{1}^{2(k_{1}+k_{2}+k_{3})} \\
& =\frac{4!}{(1-a_{1}^{2})^{4}}\frac{\left( \sum\limits_{\delta =1}^{\infty
}\sigma _{\delta }^{2}\right) ^{4}}{n}\left(
\sum\limits_{k_{1}=0}^{n}a_{1}^{2k_{1}}\right) ^{3} \\
& \leqslant \frac{4!}{(1-a_{1}^{2})^{7}}\frac{\left( \sum\limits_{\delta
=1}^{\infty }\sigma _{\delta }^{2}\right) ^{4}}{n},
\end{align*}%
where we used the change of variables $k_{1}=j-i$, $k_{2}=k-j$, $k_{3}=l-k$.

For $r=3$, we have
\begin{align*}
& \Vert g_{4,n}{\otimes _{3}}g_{4,n}\Vert _{L^{2}([0,1]^{2})}^{2} \\
& \leqslant \frac{1}{n^{2}}\sum_{i,j,k,l=1}^{n}\left\langle (f_{i}\otimes
f_{i}){\otimes _{3}}(f_{j}\otimes f_{j}),(f_{k}\otimes f_{k}){\otimes _{3}}%
(f_{l}\otimes f_{l})\right\rangle _{L^{2}([0,1]^{2})} \\
& =\frac{1}{n^{2}}\sum_{i,j,k,l=1}^{n}\left\langle f_{i},f_{j}\right\rangle
_{L^{2}([0,1]^{2})}\left\langle f_{k},f_{l}\right\rangle
_{L^{2}([0,1]^{2})}\left\langle f_{i}\otimes _{1}f_{j},f_{k}\otimes
_{1}f_{l}\right\rangle _{L^{2}([0,1]^{2})} \\
& =\frac{4!}{n^{2}}\sum_{1\leqslant i\leqslant j\leqslant k\leqslant
l\leqslant n}\left\langle f_{i},f_{j}\right\rangle
_{L^{2}([0,1]^{2})}\left\langle f_{k},f_{l}\right\rangle
_{L^{2}([0,1]^{2})}\left\langle f_{i}\otimes _{1}f_{j},f_{k}\otimes
_{1}f_{l}\right\rangle _{L^{2}([0,1]^{2})} \\
& \leqslant \frac{4!}{(1-a_{1}^{2})^{2}}\left( \sum\limits_{\delta
=1}^{\infty }\sigma _{\delta }^{2}\right) ^{2}\left( \sum\limits_{\delta
=1}^{\infty }\sigma _{\delta }^{4}\right) \frac{1}{(1-a_{1}^{4})}\frac{1}{%
n^{2}}\sum_{1\leqslant i\leqslant j\leqslant k\leqslant l\leqslant
n}a_{1}^{2(j-i)}a_{1}^{2(l-i)}(1-a_{1}^{2i})(1-a_{1}^{4i})(1-a_{1}^{2k}) \\
& \leqslant \frac{4!}{(1-a_{1}^{2})^{2}}\left( \sum\limits_{\delta
=1}^{\infty }\sigma _{\delta }^{2}\right) ^{2}\left( \sum\limits_{\delta
=1}^{\infty }\sigma _{\delta }^{4}\right) \frac{1}{(1-a_{1}^{4})}\frac{1}{n}%
\sum\limits_{k_{1},k_{2},k_{3}=0}^{n}a_{1}^{2(2k_{1}+k_{2}+k_{3})} \\
& =\frac{4!}{(1-a_{1}^{2})^{2}}\frac{1}{(1-a_{1}^{4})}\left(
\sum\limits_{\delta =1}^{\infty }\sigma _{\delta }^{2}\right) ^{2}\left(
\sum\limits_{\delta =1}^{\infty }\sigma _{\delta }^{4}\right) \frac{1}{n}%
\left( \sum\limits_{r_{1}=0}^{n}a_{1}^{4r_{1}}\right) \left(
\sum\limits_{r_{2}=0}^{n}a_{1}^{2r_{2}}\right) ^{2} \\
& \leqslant {4!}\left( \sum\limits_{\delta =1}^{\infty }\sigma _{\delta
}^{2}\right) ^{2}\left( \sum\limits_{\delta =1}^{\infty }\sigma _{\delta
}^{4}\right) \frac{1}{(1-a_{1}^{4})^{2}}\frac{1}{(1-a_{1}^{2})^{4}}\times
\frac{1}{n}
\end{align*}%
where, we used (\ref{produitscalaire}) and the change of variables $%
k_{1}=j-i $, $k_{2}=k-j$, $k_{3}=l-k$, which ends the proof.
\end{proof}

\begin{lemma}
\label{LemmaC5}Suppose Assumption (\ref{assumption}) holds. Consider the
kernels $g_{2,n}$ and $g_{4,n}$ defined in (\ref{kernels}), then we have
\begin{equation}
\sqrt{\left\langle g_{2,n}\otimes g_{2,n},g_{4,n}\otimes
_{2}g_{4,n}\right\rangle _{L^{2}([0,1]^{4})}}\leqslant \frac{C_{5}}{\sqrt{n}}%
,  \label{C_{5}}
\end{equation}%
where $C_{5}:=\sqrt{4!\frac{4}{n}\left( \sum_{\delta =1}^{\infty }\sigma
_{\delta }^{8}\right) \frac{1}{1-a_{1}^{8}}\left( \frac{1}{1-a_{1}^{2}}%
\right) ^{3}}$.
\end{lemma}

\begin{proof}
We have
\begin{eqnarray*}
&&\left\langle g_{2,n}\otimes g_{2,n},g_{4,n}\otimes_{2}g_{4,n}
\right\rangle_{L^2([0,1]^4)} \\
&=&\frac{4}{n^2}\sum_{i,j,k,l=1}^n\left\langle (f_{i}
\otimes_{1}f_{i})\otimes (f_{j} \otimes_{1}f_{j}), (f_{k} \otimes
f_{k})\otimes_{2} (f_{l} \otimes f_{l}) \right\rangle_{L^2([0,1]^4)} \\
&=&\frac{4}{n^2}\sum_{i,j,k,l=1}^n\int_{[0,1]^4}(f_{i}%
\otimes_{1}f_{k})(x_1,x_3)(f_{i}\otimes_{1}f_{k})(x_1,x_4)
(f_{j}\otimes_{1}f_{l})(x_2,x_3)(f_{j}%
\otimes_{1}f_{l})(x_2,x_4)dx_1dx_2dx_3dx_4.
\end{eqnarray*}
Consequently, using (\ref{contr1}), we get
\begin{eqnarray*}
\left\langle g_{2,n}\otimes g_{2,n},g_{4,n}\otimes_{2}g_{4,n}
\right\rangle_{L^2([0,1]^4)} &\leqslant&\frac{4}{n^2}\sum_{i,j,k,l=1}^n%
\sum_{r=1}^{i\wedge j\wedge k\wedge l} a_{1}^{2(i+j+k+l-4r)} \sum_{\delta =
1}^{\infty} \sigma_{\delta}^8 \\
&\leqslant&4!\frac{4}{n^2}\left(\sum_{\delta = 1}^{\infty}
\sigma_{\delta}^8\right)\sum_{1\leqslant i\leqslant j\leqslant k\leqslant
l\leqslant n}\sum_{r=1}^{i} a_{1}^{2(i+j+k+l-4r)} \\
&\leqslant&4!\frac{4}{n^2}\left(\sum_{\delta = 1}^{\infty}
\sigma_{\delta}^8\right)\sum_{1\leqslant i\leqslant j\leqslant k\leqslant
l\leqslant n} a_{1}^{2(j+k+l-3i)} \frac{1-a_1^{8i}}{1-a_1^{8}} \\
&\leqslant&4!\frac{4}{n^2}\left(\sum_{\delta = 1}^{\infty}
\sigma_{\delta}^8\right)\frac{1}{1-a_1^{8}}\sum_{1\leqslant i\leqslant
j\leqslant k\leqslant l\leqslant n} a_{1}^{2(j+k+l-3i)} \\
&\leqslant&4!\frac{4}{n}\left(\sum_{\delta = 1}^{\infty}
\sigma_{\delta}^8\right)\frac{1}{1-a_1^{8}}\sum_{ k_1,k_2,k_3=0}^{n}
a_{1}^{2(k_1+k_2+k_3)} \\
&=&4!\frac{4}{n}\left(\sum_{\delta = 1}^{\infty} \sigma_{\delta}^8\right)%
\frac{1}{1-a_1^{8}}\left(\sum_{k_1=0}^{n} a_{1}^{2k_1} \right)^3 \\
&\leqslant&4!\frac{4}{n}\left(\sum_{\delta = 1}^{\infty}
\sigma_{\delta}^8\right)\frac{1}{1-a_1^{8}}\left(\frac{1}{1-a_1^{2}}%
\right)^3,
\end{eqnarray*}
where we used the change of variables $k_1=j-i,\, k_2=k-i,\, k_3=l-i$.
\end{proof}

The bound (\ref{bornsupQn}) is then a direct consequence of inequality (\ref%
{decom.interm}) and the estimates given respectively in (\ref{C_{3}}), (\ref%
{C_{4}}), (\ref{C_{5}}) and Theorem \ref{asymptoticvariance}.
\end{proof}

\section{Application: estimation of the mean-reversion parameter\label{Estim}%
}

In this section, to illustrate the implications of Theorem \ref%
{convergencenormalQn} in parameter estimation in an easily tractable case,
we consider that we have observations $Y_{n}$ coming from a specific version
of our second-chaos AR(1) model (\ref{ARmodel}), that which is driven by a
chi-squared white noise with one degree\ of freedom:%
\begin{equation}
\left\{
\begin{array}{ll}
Y_{n}=a_{0}+a_{1}Y_{n-1}+\sigma (Z_{n}^{2}-1), & n\geq 1 \\
~~ &  \\
Y_{0}=y_{0}\in \mathbb{R}. &
\end{array}%
\right.  \label{ARmodelsimple}
\end{equation}%
where $a_{0}$, $a_{1}$ and $\sigma $ are real constants, and $\{Z_{n},n\geq
1\}$ are i.i.d. standard normal. This is model (\ref{ARmodel}) where all $%
\sigma _{\delta }$'s are zero except for the first one.

\begin{proposition}
\label{MoyenneQn}The quadratic variation $Q_{n}$ defined in (\ref{Qn}) for
model (\ref{ARmodelsimple}) satisfies, for all $n\geq 1$,%
\begin{equation*}
\left\vert \mathbf{E}[Q_{n}]-\frac{2\sigma ^{2}}{(1-a_{1}^{2})}\right\vert
\leqslant \frac{C_{6}}{n},
\end{equation*}%
where $C_{6}:=\frac{2\sigma ^{2}}{(1-a_{1}^{2})^{2}}$.
\end{proposition}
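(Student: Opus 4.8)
The starting point is the chaos representation already established in the excerpt: for model (\ref{ARmodelsimple}) we have $Q_n = \frac1n\sum_{i=1}^n \tilde Y_i^2$ with $\tilde Y_i = I_2^W(f_i)$, and in the one-term case $f_i = \sigma\sum_{k=1}^i a_1^{i-k} h_{k,1}^{\otimes 2}$. First I would invoke the isometry property (\ref{Isometryprop}) to write $\mathbf{E}[\tilde Y_i^2] = 2\|f_i\|_{L^2([0,1]^2)}^2$, and then reuse the norm computation already done after (\ref{Y_{i}f_{i}}), specialized to $\sigma_1=\sigma$, $\sigma_\delta=0$ for $\delta\geq 2$, which gives $\|f_i\|_{L^2([0,1]^2)}^2 = \sigma^2\,\frac{1-a_1^{2i}}{1-a_1^2}$. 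Hence $\mathbf{E}[\tilde Y_i^2] = \frac{2\sigma^2}{1-a_1^2}\,(1-a_1^{2i})$.

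Next I would sum over $i$ and divide by $n$ to obtain the exact identity
\begin{equation*}
\mathbf{E}[Q_n] = \frac{2\sigma^2}{1-a_1^2}\left(1 - \frac1n\sum_{i=1}^n a_1^{2i}\right),
\qquad\text{so}\qquad
\mathbf{E}[Q_n] - \frac{2\sigma^2}{1-a_1^2} = -\frac{2\sigma^2}{1-a_1^2}\cdot\frac1n\sum_{i=1}^n a_1^{2i}.
\end{equation*}

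Finally I would bound the geometric tail: since $|a_1|<1$, $\left|\sum_{i=1}^n a_1^{2i}\right| \leq \sum_{i=1}^\infty |a_1|^{2i} = \frac{a_1^2}{1-a_1^2} \leq \frac{1}{1-a_1^2}$, which yields
\begin{equation*}
\left|\mathbf{E}[Q_n] - \frac{2\sigma^2}{1-a_1^2}\right| \leq \frac{2\sigma^2}{(1-a_1^2)^2}\cdot\frac1n = \frac{C_6}{n},
\end{equation*}
exactly the claimed bound with $C_6 = \frac{2\sigma^2}{(1-a_1^2)^2}$.

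There is no genuine obstacle here: the statement is an elementary consequence of the second-chaos isometry together with a finite geometric sum, and every ingredient (the kernel $f_i$, its $L^2$ norm, the decomposition $Y_i = d_i + \tilde Y_i$) has already been recorded earlier in the paper. The only point requiring a moment's care is to make sure the truncation of the geometric series is done cleanly so that the constant comes out as $\frac{2\sigma^2}{(1-a_1^2)^2}$ rather than something larger; using $a_1^2/(1-a_1^2)\leq 1/(1-a_1^2)$ suffices.
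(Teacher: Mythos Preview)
Your proof is correct and follows exactly the same route as the paper's own argument: apply the isometry (\ref{Isometryprop}) to get $\mathbf{E}[Q_n]=\frac{2}{n}\sum_i\|f_i\|^2$, use the explicit norm $\|f_i\|^2=\sigma^2(1-a_1^{2i})/(1-a_1^2)$, and bound the residual geometric sum $\frac{1}{n}\sum_{i=1}^n a_1^{2i}$ by $\frac{1}{n(1-a_1^2)}$. There is nothing to add or correct.
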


\begin{proof}
From the definition of $Q_{n}$ in (\ref{Qn}), we have by the isometry
property of multiple integrals
\begin{align*}
& \left\vert \mathbf{E}[Q_{n}]-\frac{2\sigma ^{2}}{(1-a_{1}^{2})}\right\vert
=\left\vert \frac{2}{n}\sum\limits_{i=1}^{n}\Vert f_{i}\Vert
_{L^{2}([0,1]^{2})}^{2}-\frac{2\sigma ^{2}}{1-a_{1}^{2}}\right\vert \\
& =\left\vert \frac{2\sigma ^{2}}{n}\sum\limits_{i=1}^{n}\sum%
\limits_{k=1}^{i}a_{1}^{2(i-k)}-\frac{2\sigma ^{2}}{1-a_{1}^{2}}\right\vert
=\left\vert \frac{2\sigma ^{2}}{1-a_{1}^{2}}\left( \frac{1}{n}%
\sum_{i=1}^{n}(1-a_{1}^{2i})-1\right) \right\vert \\
& =\left\vert -\frac{2\sigma ^{2}}{1-a_{1}^{2}}\frac{1}{n}%
\sum\limits_{i=1}^{n}a_{1}^{2i}\right\vert \leqslant \frac{2\sigma ^{2}}{%
(1-a_{1}^{2})^{2}}\frac{1}{n}.
\end{align*}
\end{proof}

\begin{remark}
Assuming that $\sigma $ is known, Proposition (\ref{MoyenneQn}) shows that
the quadratic variation $Q_{n}$ is an asymptotically unbiased estimator for $%
2\sigma ^{2}/(1-a_{1}^{2})$, and thus, after a transformation, for $%
\left\vert a_{1}\right\vert $ as well:%
\begin{equation*}
\lim_{n\rightarrow \infty }\sqrt{1-\frac{2\sigma ^{2}}{\mathbf{E}[Q_{n}]}}%
=\left\vert a_{1}\right\vert .
\end{equation*}
\end{remark}

Therefore, using the fact that $\mathbf{E}[Q_{n}]$ can be estimated via $%
Q_{n}$, we suggest the following moment estimator for the mean-reversion
rate $|a_{1}|$
\begin{equation}
\hat{a}_{n}:=f(Q_{n}),  \label{estima1}
\end{equation}%
where
\begin{equation}
f(x):=\sqrt{1-\frac{2\sigma ^{2}}{x}},\quad x>2\sigma ^{2}.  \label{f}
\end{equation}

\subsection{Properties of the estimator $\hat{a}_{n}$}

\begin{proposition}
The estimator $\hat{a}_{n}$ of the mean reversion parameter $|a_{1}|$
defined in (\ref{estima1}) is strongly consistent, namely almost surely
\begin{equation*}
\lim_{n\rightarrow \infty }\hat{a}_{n}=|a_{1}|.
\end{equation*}
\end{proposition}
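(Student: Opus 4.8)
The plan is to deduce the almost-sure convergence of $\hat{a}_n=f(Q_n)$ from an almost-sure convergence of $Q_n$ itself, together with the continuity of the deterministic map $f$. The natural candidate limit for $Q_n$ is $2\sigma^2/(1-a_1^2)$, which is precisely the point at which $f$ takes the value $\sqrt{1-(1-a_1^2)}=|a_1|$; so once $Q_n\to 2\sigma^2/(1-a_1^2)$ almost surely is established, composing with $f$ (extended continuously to $2\sigma^2$ in the degenerate case $a_1=0$) concludes the proof, using the elementary fact that a continuous image of an a.s.\ convergent sequence converges a.s.\ to the image of the limit.

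The core step is therefore to prove $Q_n\to 2\sigma^2/(1-a_1^2)$ almost surely. By Proposition \ref{MoyenneQn}, $\mathbf{E}[Q_n]\to 2\sigma^2/(1-a_1^2)$ with an error of order $1/n$, so it suffices to show $Q_n-\mathbf{E}[Q_n]\to 0$ almost surely. Recall from (\ref{DecompQn}) that $Q_n-\mathbf{E}[Q_n]=T_{2,n}+T_{4,n}$ lies in the fixed sum ${\mathcal{H}}_2\oplus{\mathcal{H}}_4$ of Wiener chaoses; hence the hypercontractivity estimate (\ref{hypercontractivity}) gives, for every $p\geq 2$, $\|Q_n-\mathbf{E}[Q_n]\|_{L^p(\Omega)}\leqslant c_{p,4}\,\big(\var(Q_n)\big)^{1/2}$. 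By Theorem \ref{asymptoticvariance}, $\var(Q_n)\leqslant (l_1+l_2)/n+(C_1+C_2)/n^2\leqslant C/n$ for a suitable constant $C$, so $\|Q_n-\mathbf{E}[Q_n]\|_{L^p(\Omega)}\leqslant c_p'\, n^{-1/2}$ for all $p\geq 1$ (the range $1\leqslant p<2$ follows from monotonicity of $L^p$ norms on a probability space).

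Then I would invoke Lemma \ref{Borel-Cantelli} with $Z_n:=Q_n-\mathbf{E}[Q_n]$ and $\gamma=1/2$: for any $\varepsilon\in(0,1/2)$ there is an almost surely finite random variable $\eta_\varepsilon$ with $|Q_n-\mathbf{E}[Q_n]|\leqslant \eta_\varepsilon\, n^{-1/2+\varepsilon}$ for all $n$, and the right-hand side tends to $0$; hence $Q_n-\mathbf{E}[Q_n]\to 0$ almost surely, so $Q_n\to 2\sigma^2/(1-a_1^2)$ almost surely. Since $2\sigma^2/(1-a_1^2)>2\sigma^2$ when $a_1\neq 0$, eventually $Q_n$ lies in the domain $\{x>2\sigma^2\}$ of $f$ defined in (\ref{f}), and by continuity of $f$ there, $\hat{a}_n=f(Q_n)\to f\!\left(2\sigma^2/(1-a_1^2)\right)=|a_1|$ almost surely. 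There is no genuine obstacle in this argument; the only mild subtlety is to verify that the limit point lies in (the closure of) the domain of $f$, so that the composition $f(Q_n)$ is legitimate, which is immediate from the algebraic identity $f\!\left(2\sigma^2/(1-a_1^2)\right)=|a_1|$ and the continuous extension of $f$ to $x=2\sigma^2$ when $a_1=0$.
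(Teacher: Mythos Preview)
Your proposal is correct and follows essentially the same route as the paper: bound $\var(Q_n)$ via Theorem \ref{asymptoticvariance}, invoke Lemma \ref{Borel-Cantelli} to obtain $Q_n-\mathbf{E}[Q_n]\to 0$ a.s., combine with Proposition \ref{MoyenneQn}, and finish by continuity of $f$. You are in fact more careful than the paper's own write-up in two respects: you explicitly use hypercontractivity (\ref{hypercontractivity}) to pass from the $L^2$ bound to all $L^p$ bounds (which Lemma \ref{Borel-Cantelli} formally requires), and you verify that the limit $2\sigma^2/(1-a_1^2)$ lies in the domain of $f$ so that the composition is legitimate.
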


\begin{proof}
We write $Q_{n}=\frac{V_{n}}{\sqrt{n}}+\mathbf{E}[Q_{n}]$, with $V_{n}=\sqrt{%
n}(Q_{n}-\mathbf{E}[Q_{n}])$. According to Proposition (\ref%
{asymptoticvariance}), we have $\mathbf{E}[V_{n}^{2}]\rightarrow l_{1}+l_{2}$%
, $n\rightarrow \infty $. Hence, there exists a constant $C>0$, such that $%
n\geq 1$
\begin{equation*}
\left( \mathbf{E}\left[ \left\vert \frac{V_{n}}{\sqrt{n}}\right\vert ^{2}%
\right] \right) ^{1/2}\leqslant \frac{C}{\sqrt{n}}.
\end{equation*}%
Hence, by Lemma \ref{Borel-Cantelli} we have almost surely $\frac{V_{n}}{%
\sqrt{n}}\rightarrow 0$, as $n\rightarrow \infty $. On the other hand, by
Proposition \ref{MoyenneQn}, $\mathbf{E}[Q_{n}]\rightarrow \frac{2\sigma ^{2}%
}{1-a_{1}^{2}}$, as $n\rightarrow \infty $. Thus $Q_{n}\rightarrow \frac{%
2\sigma ^{2}}{1-a_{1}^{2}}$ almost surely as $n\rightarrow \infty $, as
announced.
\end{proof}

\begin{proposition}
\label{GaussianEst} Under Assumption (\ref{assumption}), the estimator $\hat{%
a}_{n}$ defined in (\ref{estima1}) satsifies
\begin{equation*}
d_{W}\left( \frac{\sqrt{n}}{f^{\prime }(\mu )\sqrt{l_{1}+l_{2}}}\left( \hat{a%
}_{n}-|a_{1}|\right) ,\mathcal{N}(0,1)\right) \unlhd n^{-1/2}.
\end{equation*}%
where $l_{1}$ and $l_{2}$ are given in Propositions \ref{l1} and \ref{l2}
respectively and $\mu =\frac{2\sigma ^{2}}{(1-a_{1}^{2})}$.

In particular $\hat{a}_{n}$ is asymptotically Gaussian; more precisely we
have as $n\rightarrow +\infty $
\begin{equation*}
\sqrt{n}\left( \hat{a}_{n}-|a_{1}|\right) \longrightarrow \mathcal{N}\left(
0,f^{\prime }(\mu )^{2}\times (l_{1}+l_{2})\right) ,
\end{equation*}%
where
\begin{equation*}
f^{\prime }(\mu )^{2}\times (l_{1}+l_{2})=\frac{(1-a_{1}^{2})(5-4a_{1}^{2})}{%
2a_{1}^{2}}.
\end{equation*}
\end{proposition}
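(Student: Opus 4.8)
The plan is to carry out a quantitative delta method around $\mu=\frac{2\sigma^2}{1-a_1^2}$. Since $\hat a_n=f(Q_n)$ with $f$ as in (\ref{f}) and $f(\mu)=\sqrt{1-2\sigma^2/\mu}=\sqrt{a_1^2}=|a_1|$, a second-order Taylor expansion of $f$ at $\mu$ yields, on the event $\{Q_n>2\sigma^2\}$,
\begin{equation*}
\sqrt{n}\,(\hat a_n-|a_1|)=f'(\mu)\,\sqrt{n}\,(Q_n-\mu)+\tfrac12 f''(\xi_n)\,\sqrt{n}\,(Q_n-\mu)^2
\end{equation*}
for some (random) $\xi_n$ lying between $Q_n$ and $\mu$. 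Splitting $\sqrt{n}(Q_n-\mu)=\sqrt{n}(Q_n-\mathbf{E}[Q_n])+\sqrt{n}(\mathbf{E}[Q_n]-\mu)$, where the last term is $O(n^{-1/2})$ by Proposition \ref{MoyenneQn}, and writing $Z_n:=\sqrt{n}(Q_n-\mathbf{E}[Q_n])/\sqrt{l_1+l_2}$, it suffices --- using the triangle inequality for $d_W$ together with $d_W(X,Y)\le \mathbf{E}|X-Y|$ for variables on a common space --- to establish (i) $d_W(Z_n,\mathcal{N}(0,1))\unlhd n^{-1/2}$ and (ii) $\mathbf{E}\big|\frac{\sqrt{n}(\hat a_n-|a_1|)}{f'(\mu)\sqrt{l_1+l_2}}-Z_n\big|\unlhd n^{-1/2}$. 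For (i), the bound of Theorem \ref{convergencenormalQn} is stated in total variation; since $d_W\le d_{TV}$ is false in general, one instead re-reads the proofs of Theorems \ref{newbound} and \ref{convergencenormalQn} using the Wasserstein analogue of (\ref{NPobs}), namely $d_W(X,\mathcal{N}(0,1))\le \mathbf{E}|1-\langle DX,-DL^{-1}X\rangle_{\mathcal{H}}|$ for standardized $X\in\mathbf{D}^{1,2}$, in place of (\ref{NPobs}). Only the numerical constant changes; all the contraction estimates carry over verbatim, so $d_W(Z_n,\mathcal{N}(0,1))\le C_0'\,n^{-1/2}$.

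The heart of the argument is (ii), and the main obstacle is to control the quadratic remainder $f''(\xi_n)(Q_n-\mu)^2$ uniformly despite the blow-up of $f$ and its derivatives at the boundary point $x=2\sigma^2$. Fix $\eta$ with $0<\eta<\mu-2\sigma^2$ and set $A_n:=\{|Q_n-\mu|\le\eta\}$, so that $|f''|\le M$ on $[\mu-\eta,\mu+\eta]$ for some finite $M$. On $A_n$ one has $\xi_n\in[\mu-\eta,\mu+\eta]$, hence
\begin{equation*}
\Big|\tfrac{\sqrt{n}(\hat a_n-|a_1|)}{f'(\mu)\sqrt{l_1+l_2}}-Z_n\Big|\le \frac{C_6}{\sqrt{n}\,\sqrt{l_1+l_2}}+\frac{M}{2|f'(\mu)|\sqrt{l_1+l_2}}\,\sqrt{n}\,(Q_n-\mu)^2 ,
\end{equation*}
and since $\mathbf{E}[(Q_n-\mu)^2]=Var(Q_n)+(\mathbf{E}[Q_n]-\mu)^2=O(1/n)$ by Theorem \ref{asymptoticvariance} and Proposition \ref{MoyenneQn}, the expectation of the left side over $A_n$ is $O(n^{-1/2})$. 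On $A_n^c$ one uses that $Q_n-\mathbf{E}[Q_n]$ belongs to the fixed sum of chaoses $\mathcal{H}_2\oplus\mathcal{H}_4$ with $L^2$-norm $O(n^{-1/2})$, so hypercontractivity (\ref{hypercontractivity}) gives $\|Q_n-\mu\|_{L^{2p}}\le c_p\,n^{-1/2}$ for every $p\ge1$, whence $\mathbf{P}(A_n^c)\le \eta^{-2p}c_p^{2p}\,n^{-p}$. Defining (harmlessly) $\hat a_n:=0$ on the negligible event $\{Q_n\le 2\sigma^2\}$ so that $\hat a_n\in[0,1]$ and $|\hat a_n-|a_1||\le1$ surely, Cauchy--Schwarz gives
\begin{equation*}
\mathbf{E}\Big[\Big|\tfrac{\sqrt{n}(\hat a_n-|a_1|)}{f'(\mu)\sqrt{l_1+l_2}}-Z_n\Big|\,\mathbf{1}_{A_n^c}\Big]\le \frac{\sqrt{n}}{|f'(\mu)|\sqrt{l_1+l_2}}\,\mathbf{P}(A_n^c)+\frac{\sqrt{n}}{\sqrt{l_1+l_2}}\,\|Q_n-\mathbf{E}[Q_n]\|_{L^2}\,\mathbf{P}(A_n^c)^{1/2},
\end{equation*}
which is $O(n^{-1/2})$ on choosing $p$ large enough. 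Combining (i) and (ii) yields the Wasserstein bound, and the asserted weak convergence follows immediately.

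Finally, the explicit limiting variance is a routine computation. Specializing Theorem \ref{asymptoticvariance} to the single-weight model (\ref{ARmodelsimple}), where $\sum_\delta\sigma_\delta^4=\sigma^4$ and $(\sum_\delta\sigma_\delta^2)^2=\sigma^4$, one obtains $l_1+l_2=\frac{8\sigma^4(5-4a_1^2)}{(1-a_1^2)^3}$; differentiating (\ref{f}) gives $f'(\mu)=\frac{(1-a_1^2)^2}{4\sigma^2|a_1|}$, and therefore $f'(\mu)^2(l_1+l_2)=\frac{(1-a_1^2)(5-4a_1^2)}{2a_1^2}$, as claimed. The only genuinely delicate step is the handling of $A_n^c$ in (ii): it is precisely the polynomial-in-every-order decay of $\mathbf{P}(|Q_n-\mu|>\eta)$ --- available because $\sqrt{n}(Q_n-\mathbf{E}[Q_n])$ lives in a fixed sum of Wiener chaoses and so satisfies hypercontractivity --- that allows us to neutralize the singularity of $f$ near $2\sigma^2$ using nothing more than the boundedness of the range of $\hat a_n$.
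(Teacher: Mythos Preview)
Your argument is correct and follows the same overall delta-method skeleton as the paper (Taylor/mean-value expansion of $f$ at $\mu$, triangle inequality for $d_W$, hypercontractivity to control moments of $Q_n-\mu$), but the treatment of the two genuinely delicate points differs from the paper's in ways worth noting.

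First, you are more careful than the paper about the passage from total variation to Wasserstein. The paper at (\ref{boundQn}) simply invokes the $d_{TV}$ bound (\ref{bornsupQn}) to control $d_W$, without comment. Your observation that the same Stein--Malliavin quantity $\mathbf{E}|1-\langle DF,-DL^{-1}F\rangle_{\mathcal H}|$ also bounds $d_W$ (with a different numerical constant) is the right justification; the contraction estimates of Section \ref{BerryEsseen} carry over verbatim, as you say.

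Second, and more substantially, your handling of the singularity of $f''$ at $2\sigma^2$ is different from the paper's and, in fact, more elementary. The paper applies H\"older to separate $\sqrt{n}(Q_n-\mu)^2$ from $|f''(\delta_n)|$ and then argues that $\sup_n\mathbf{E}[|f''(\delta_n)|^{p'}]<\infty$ by a case split on $\{|Q_n-\mu|\gtrless n^{-1/2}\}$, the ``large'' part being treated via an asymptotic-normality heuristic. Your route---localizing to the fixed neighborhood $A_n=\{|Q_n-\mu|\le\eta\}$ where $|f''|\le M$, and on $A_n^c$ abandoning the Taylor expansion altogether in favor of the trivial bound $|\hat a_n-|a_1||\le 1$ together with the polynomial-in-every-order decay $\mathbf{P}(A_n^c)\le c_p n^{-p}$ from hypercontractivity---sidesteps the need to control any negative moment of $Q_n-2\sigma^2$. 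This is a genuine simplification. Your explicit convention $\hat a_n:=0$ on $\{Q_n\le 2\sigma^2\}$ (an event of probability $O(n^{-p})$ for every $p$) also resolves a point the paper leaves implicit.
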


\begin{proof}
For $x>2\sigma ^{2}$, the function $f$ defined in (\ref{f}) has an inverse $%
f^{-1}(y)=\frac{2\sigma ^{2}}{1-y^{2}}$. Let denote $\mu =f^{-1}(|a_{1}|)=%
\frac{2\sigma ^{2}}{(1-a_{1}^{2})}>2\sigma ^{2}$, for all $a_{1}\in (-1,1)$.
On the other hand, we can write
\begin{equation*}
\frac{\sqrt{n}}{\sqrt{l_{1}+l_{2}}}\left( Q_{n}-\mu \right) =\frac{\sqrt{n}}{%
\sqrt{l_{1}+l_{2}}}\left( Q_{n}-\mathbf{E}[Q_{n}]\right) +\frac{\sqrt{n}}{%
\sqrt{l_{1}+l_{2}}}\left( \mathbf{E}[Q_{n}]-\mu \right)
\end{equation*}%
Therefore, from the properties of the Wasserstein metric and denoting $N\sim
\mathcal{N}(0,1)$, we get
\begin{align}
d_{W}\left( \frac{\sqrt{n}}{\sqrt{l_{1}+l_{2}}}\left( Q_{n}-\mu \right)
,N\right) & \leqslant \frac{\sqrt{n}}{\sqrt{l_{1}+l_{2}}}\left\vert \mathbf{E%
}[Q_{n}]-\mu \right\vert +d_{W}\left( \frac{\sqrt{n}}{\sqrt{l_{1}+l_{2}}}%
\left( Q_{n}-\mathbf{E}[Q_{n}])\right) ,N\right)  \notag \\
& \leqslant \frac{C_{6}}{\sqrt{l_{1}+l_{2}}}n^{-1/2}+C_{0}n^{-1/2}  \notag \\
& \unlhd n^{-1/2}.  \label{boundQn}
\end{align}%
where we used the bound (\ref{bornsupQn}) and Proposition \ref{MoyenneQn}
for the above bounds. On the other hand, since the function $f$ defined in (%
\ref{f}) is a diffeomorphism and since $\hat{a}_{n}=f(Q_{n})$, then by the
mean-value theorem, there exists a random variable $\xi _{n}$ $\in $ $%
[|Q_{n},\mu |]$ such that%
\begin{equation*}
\left( \hat{a}_{n}-|a_{1}|\right) =f^{\prime }(\xi _{n})\left( Q_{n}-\mu
\right) .
\end{equation*}%
We have%
\begin{align*}
d_{W}\left( \frac{\sqrt{n}}{f^{\prime }(\mu )\sqrt{l_{1}+l_{2}}}\left( \hat{a%
}_{n}-|a_{1}|\right) ,\mathcal{N}(0,1)\right) & \leqslant d_{W}\left( \frac{%
\sqrt{n}}{f^{\prime }(\mu )\sqrt{l_{1}+l_{2}}}\left( \hat{a}%
_{n}-|a_{1}|\right) ,\frac{\sqrt{n}}{\sqrt{l_{1}+l_{2}}}\left( Q_{n}-\mu
\right) \right) \\
& \quad \quad +d_{W}\left( \frac{\sqrt{n}}{\sqrt{l_{1}+l_{2}}}\left(
Q_{n}-\mu \right) ,\mathcal{N}(0,1)\right)
\end{align*}%
According to (\ref{boundQn}), the last term is bounded by the speed $%
n^{-1/2} $. Moreover, applying the mean-value theorem again since $f$ is
twice continuously differentiable, there exists a random variable $\delta
_{n}$ $\in $ $[|\xi _{n},\mu |]$ $\subset $ $[|Q_{n},\mu |]$, such that
\begin{eqnarray}
&&d_{W}\left( \frac{\sqrt{n}}{f^{\prime }(\mu )\sqrt{l_{1}+l_{2}}}\left(
\hat{a}_{n}-|a_{1}|\right) ,\frac{\sqrt{n}}{\sqrt{l_{1}+l_{2}}}\left(
Q_{n}-\mu \right) \right) |f^{\prime }(\mu )|\sqrt{l_{1}+l_{2}}  \notag \\
\qquad &\leqslant &\sqrt{n}\mathbf{E}[|(Q_{n}-\mu )(f^{\prime }(\xi
_{n})-f^{\prime }(\mu ))|]  \notag \\
&=&\sqrt{n}\mathbf{E}[\left\vert (Q_{n}-\mu )f^{\prime \prime }(\delta
_{n})(\xi _{n}-\mu )\right\vert ]  \notag \\
&\leqslant &\sqrt{n}\mathbf{E}[\left( Q_{n}-\mu )^{2}\right) |f^{\prime
\prime }(\delta _{n})|]  \notag \\
&\leqslant &\sqrt{n}\left( \mathbf{E}[(Q_{n}-\mu )^{2p}]\right) ^{1/p}\left(
\mathbf{E}[|f^{\prime \prime }(\delta _{n})|^{p^{\prime }}]\right)
^{1/p^{\prime }},  \label{borne0}
\end{eqnarray}%
where we used Hölder's inequality with $p$, $p^{\prime }$ are two reals
greater than 1 such that $\frac{1}{p}+\frac{1}{p^{\prime }}=1$. Moreover, by
the hypercontractivity property for multiple integrals (\ref%
{hypercontractivity}), there exists a constant $C(p)$ such that
\begin{align}
\sqrt{n}\left( \mathbf{E}[(Q_{n}-\mu )^{2p}]\right) ^{1/p}& \leqslant C(p)%
\sqrt{n}\mathbf{E}[(Q_{n}-\mu )^{2}]  \notag \\
& \leqslant 2C(p)\sqrt{n}\mathbf{E}[(Q_{n}-\mathbf{E}[Q_{n}])^{2}]+2C(p)%
\sqrt{n}(\mathbf{E}[Q_{n}]-\mu )^{2}  \notag \\
& \leqslant Cn^{-1/2}+Cn^{-3/2}  \notag \\
& \unlhd n^{-1/2},  \label{premiereborne}
\end{align}%
where we used the inequality $(a+b)^{2}\leqslant 2a^{2}+2b^{2}$, for all $a$%
, $b$ $\in \mathbb{R}$ and the bounds of Proposition \ref{MoyenneQn} and
Theorem \ref{asymptoticvariance} respectively. On the other hand, for all $%
x>2\sigma ^{2}$,
\begin{equation*}
f^{^{\prime \prime }}(x)=-\frac{2\sigma ^{2}}{x^{3}}\left( 1-\frac{3\sigma
^{2}}{2x}\right) \left( 1-\frac{2\sigma ^{2}}{x}\right) ^{-3/2}<0.
\end{equation*}%
Therefore using (\ref{borne0}) and (\ref{premiereborne}), to obtain a bound
for the term $d_{W}\left( \frac{\sqrt{n}}{f^{\prime }(\mu )\sqrt{l_{1}+l_{2}}%
}\left( \hat{a}_{n}-|a_{1}|\right) ,\frac{\sqrt{n}}{\sqrt{l_{1}+l_{2}}}%
\left( Q_{n}-\mu \right) \right) $, it remains to show that $\mathbf{E}%
[|f^{\prime \prime }(\delta _{n})|^{p^{\prime }}]$ is finite for some $%
p^{\prime }>1$ but using the fact that $\delta _{n}\in \lbrack |Q_{n},\mu |]$
and the monotonicity of $f^{\prime \prime }$, it is actually sufficient to
show that for some $p^{\prime }>1$, we have
\begin{equation*}
\sup_{n\geq 1}\mathbf{E}\left[ |f^{\prime \prime }(x)|^{p^{\prime }}\right]
=\sup_{n\geq 1}\mathbf{E}\left[ |2\sigma ^{2}Q_{n}-3\sigma ^{4}|^{p^{\prime
}}|Q_{n}|^{-5p^{\prime }/2}|Q_{n}-2\sigma ^{2}|^{-3p^{\prime }/2}\right]
<\infty .
\end{equation*}%
The function $|f^{\prime \prime }|$ has two singularities in $0$ and in $%
2\sigma ^{2}$ and thus is not bounded. But, we can write
\begin{equation*}
\mathbf{E}\left[ |f^{\prime \prime }(Q_{n})|\right] =\mathbf{E}\left[
|f^{\prime \prime }(Q_{n})|\mathbf{1}_{\{|Q_{n}-\mu |\geq \frac{1}{\sqrt{n}}%
\}}\right] +\mathbf{E}\left[ |f^{\prime \prime }(Q_{n})|\mathbf{1}%
_{\{|Q_{n}-\mu |<\frac{1}{\sqrt{n}}\}}\right] .
\end{equation*}%
For the term $\mathbf{E}\left[ |f^{\prime \prime }(Q_{n})|\mathbf{1}%
_{\{|Q_{n}-\mu |<\frac{1}{\sqrt{n}}\}}\right] $ and since $\mu >2\sigma ^{2}$%
, we can pick $n$ such that $\frac{1}{\sqrt{n}}<\sigma ^{2}$. Then $%
Q_{n}>2\sigma ^{2}-\sigma ^{2}>0$, therefore $Q_{n}$ is bounded away from $0$
and the term $|Q_{n}|^{-5p^{\prime }/2}$ has no singularity for any $%
p^{\prime }>1$. For the term $|Q_{n}-2\sigma ^{2}|^{-3p^{\prime }/2}$ , we
put $C:=\frac{\mu -2\sigma ^{2}}{2}$, the constant $C\neq 0$, because $\mu
\neq 2\sigma ^{2}\Leftrightarrow a_{1}\neq 0$, we can assume $a_{1}\neq 0$,
because there is no AR(1) process with $a_{1}=0$. Therefore, we can pick $n$
such that $\frac{1}{\sqrt{n}}<C$. In this case $Q_{n}-2\sigma ^{2}=Q_{n}-\mu
+2C>-\frac{1}{\sqrt{n}}+2C>2C-C>0$, hence the term $|Q_{n}-2\sigma
^{2}|^{-3p^{\prime }/2}$ has no singularities at $2\sigma ^{2}$ for any $%
p^{\prime }>1$. In conclusion, to avoid the singularities at both $0$ and $%
2\sigma ^{2}$, it is sufficient to pick $n$ such that

$\frac{1}{\sqrt{n}}<\sigma ^{2}\wedge C=%
\begin{cases}
\sigma ^{2} & \text{ if }|a_{1}|\geq \frac{1}{\sqrt{2}} \\
C & \text{ if }|a_{1}|\leqslant \frac{1}{\sqrt{2}},%
\end{cases}%
$.

For the other term, by the asymptotic normality of $\sqrt{n}(Q_{n}-\mu )$,
we get as $n\sim +\infty $ and for $p^{\prime }>1$
\begin{align*}
\mathbf{E}\left[ |f^{\prime \prime }(Q_{n})|\mathbf{1}_{\{|Q_{n}-\mu |\geq
\frac{1}{\sqrt{n}}\}}\right] & \sim |2\sigma ^{2}|^{p^{\prime
}}\int_{1}^{+\infty }|\frac{z}{\sqrt{n}}+\mu -\frac{3}{2}\sigma
^{2}|^{p^{\prime }}|\frac{z}{\sqrt{n}}+\mu |^{-5p^{\prime }/2}|\frac{z}{%
\sqrt{n}}+\mu -2\sigma ^{2}|^{-3p^{\prime }/2}e^{-z^{2}/2}dz \\
& \leqslant |2\sigma ^{2}|^{p^{\prime }}\int_{1}^{+\infty }|\frac{z}{\sqrt{n}%
}+\mu -2\sigma ^{2}|^{-7p^{\prime }/2}e^{-z^{2}/2}dz \\
& \leqslant \frac{\sqrt{2\pi }}{2}|2\sigma ^{2}|^{p^{\prime }}(\mu -2\sigma
^{2})^{-7p^{\prime }/2}.
\end{align*}%
which gives the desired result.
\end{proof}

\subsection{Numerical Results}

The table below reports the mean and standard deviation of the proposed
estimator $\hat{a}_{n}$ defined in (\ref{estima1}) of the true value of the
mean-reversion parameter $|a_{1}|$. {\renewcommand{\arraystretch}{1.3}
\begin{table}[H]
\centering
\vspace{0.3cm}
\begin{tabular}{lcccccccc}
\cline{1-9}
& \multicolumn{2}{c}{$|a_{1}|=0.10$} & \multicolumn{2}{c}{$|a_{1}|=0.30$} &
\multicolumn{2}{c}{$|a_{1}|=0.50$} & \multicolumn{2}{c}{$|a_{1}|=0.70$} \\
& Mean & Std dev & Mean & Std dev & Mean & Std dev & Mean & Std dev \\
\cline{1-9}
$n = 3000$ & 0.2178 & 0.0901 & 0.2887 & 0.0969 & 0.4946 & 0.0520 & 0.6962 &
0.0234 \\
$n = 5000$ & 0.1878 & 0.0866 & 0.2905 & 0.0616 & 0.4966 & 0.0413 & 0.6978 &
0.0270 \\
$n = 10000$ & 0.1630 & 0.0692 & 0.2928 & 0.0852 & 0.4974 & 0.0315 & 0.6987 &
0.0215 \\ \cline{1-9}
\end{tabular}%
\end{table}
}\label{table1} We simulate the values of the estimator $\hat{a}_{n}$ from
the quadratic variation $Q_{n}$ for different sample sizes $n$ and for fixed
$\sigma ^{2}$ chosen to be equal to 1. For each sample size $n$, the mean
and the standard deviation are obtained by 500 replications. The table above
confirms that the estimator $\hat{a}_{n}$ is strongly consistent even for
small values of $n$ and has small standard deviations for different true
values of $|a_{1}|$. Moreover, the estimator $\hat{a}_{n}$ is more efficient
for values of $|a_{1}|$ greater than $0.5$, this could be explained by the
fact that the asymptotic variance of limiting law of $|\hat{a}_{n}|$ is $%
\frac{(1-a_{1}^{2})(5-4a_{1}^{2})}{2a_{1}^{2}}$, which is high for small
values of $|a_{1}|$ and small for values of $|a_{1}|$ close to 1. Therefore,
$\hat{a}_{n}$ is presumably more accurate as an estimator when $|a_{1}|$ is
closer to 1, e.g. greater than 0.5 as can be seen in the figure below.
\begin{figure}[H]
\centering
\includegraphics[width=0.7\linewidth]{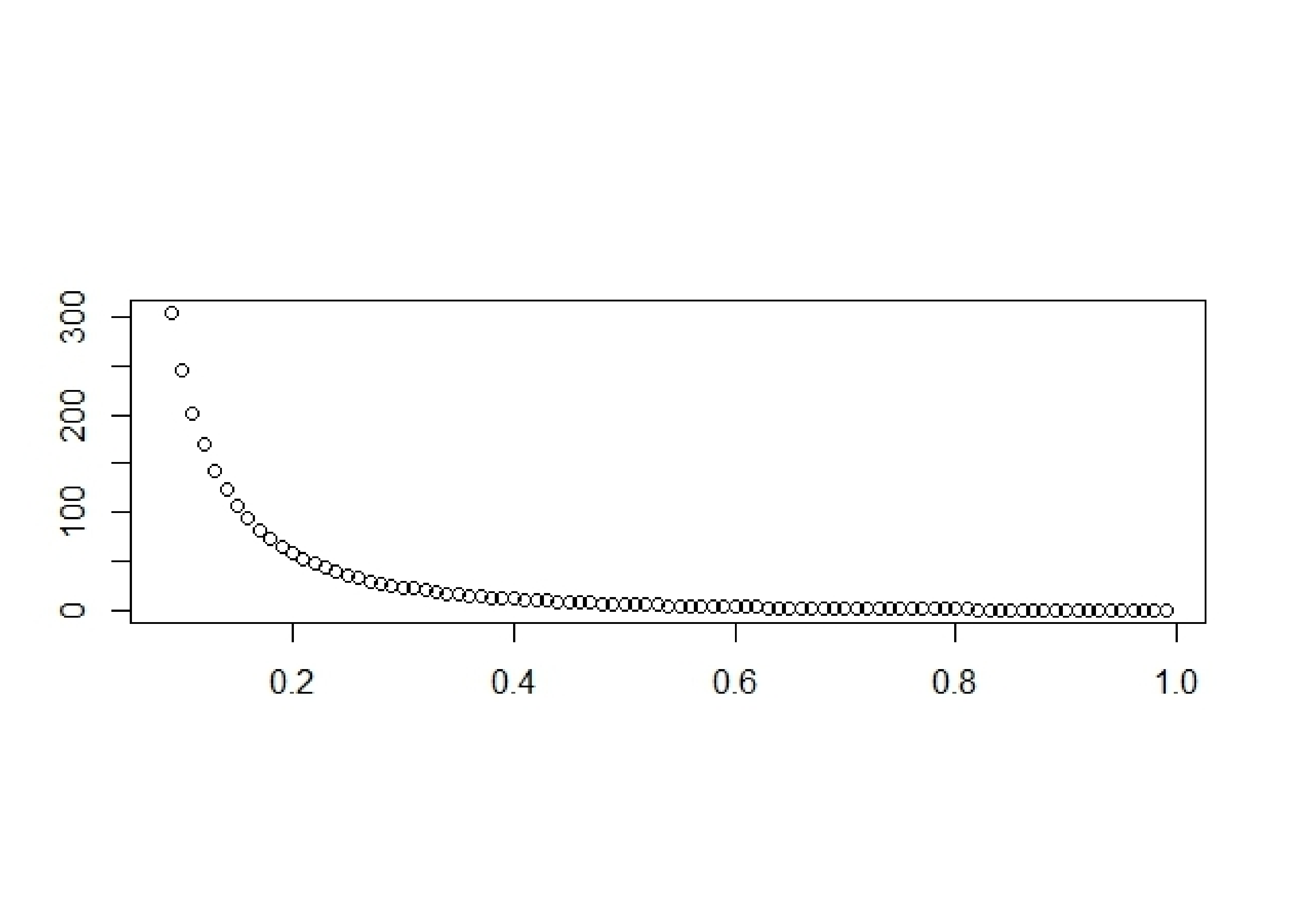}
\caption{Asymptotic variance for values of $|a_{1}|$ between 0.09 and 0.99}
\label{figurevariance}
\end{figure}

To investigate the asymptotic distribution of $\hat{a}_{n}$ empirically, we
need to compare the distribution of the following statistic
\begin{equation}
\phi (n,a_{1}):=\frac{\sqrt{2a_{1}^{2}}}{\sqrt{(1-a_{1}^{2})(5-4a_{1}^{2})}}%
\sqrt{n}(\hat{a}_{n}-|a_{1}|)  \label{stats}
\end{equation}%
with the standard normal distribution $\mathcal{N}(0,1)$. For this aim, for
parameter choices $\left\vert a_{1}\right\vert =0.5$, $n=3000$, $\sigma =1$,
and based on 3000 replications, we obtained the following histogram:
\begin{figure}[H]
\centering
\includegraphics[width=0.7\linewidth]{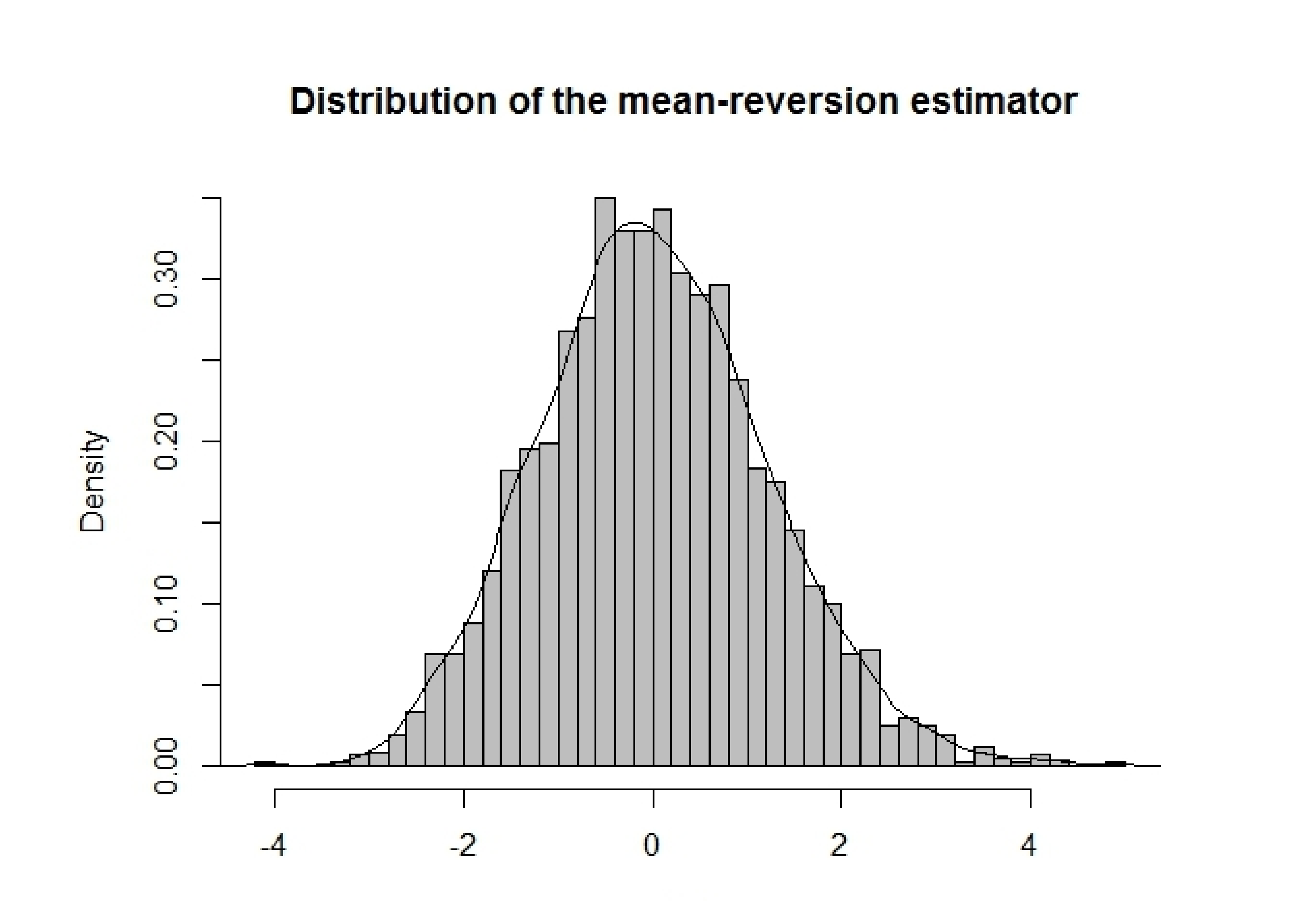}
\caption{Histogram of $\protect\phi (n,a_{1})$ with $n=3000$, $|a_{1}|=0.5,%
\protect\sigma =1$, 3000 replications.}
\label{fig:ar1}
\end{figure}
This Figure \ref{fig:ar1} shows that the normal approximation of the
distribution of the statistic $\phi (n,a_{1})$ is reasonable even if the
sampling size $n$ is not very large. The table below compares statistics of $%
\phi (n,a_{1})$ and $\mathcal{N}$(0,1) based on 3000 replications, with $%
n=3000$, and $\sigma =1$. The empirical mean, median and standard deviation
of $\phi (n,a_{1})$ match those of $\mathcal{N}$(0,1) very closely,
corroborating our theoretical results.

\begin{center}
\begin{tabular}{cccc}
\multicolumn{4}{c}{} \\
Statistics & Mean & Median & Standard Deviation \\ \hline
$\mathcal{N}$(0,1) & 0 & 0 & 1 \\
$\phi(n,a_{1})$ & 0.0048515 & 0.0020649 & 1.0004691 \\ \hline
\end{tabular}
\end{center}

We can check more precisely how fast is the statistic $\phi (n,a_{1})$
converges in law to $\mathcal{N}$(0,1). We chose to compute the Kolmogorov
distance between $\phi (n,a_{1})$ and $\mathcal{N}(0,1)$. For this aim, we
approximate the cumulative distribution function using empirical cumulation
distribution function based on 500 replications of the computation of $\phi
(n,a_{1})$ for $n=3000$. The next figure shows the empirical and standard
normal cumulative distribution functions.
\begin{figure}[H]
\centering
\includegraphics[width=0.7\linewidth]{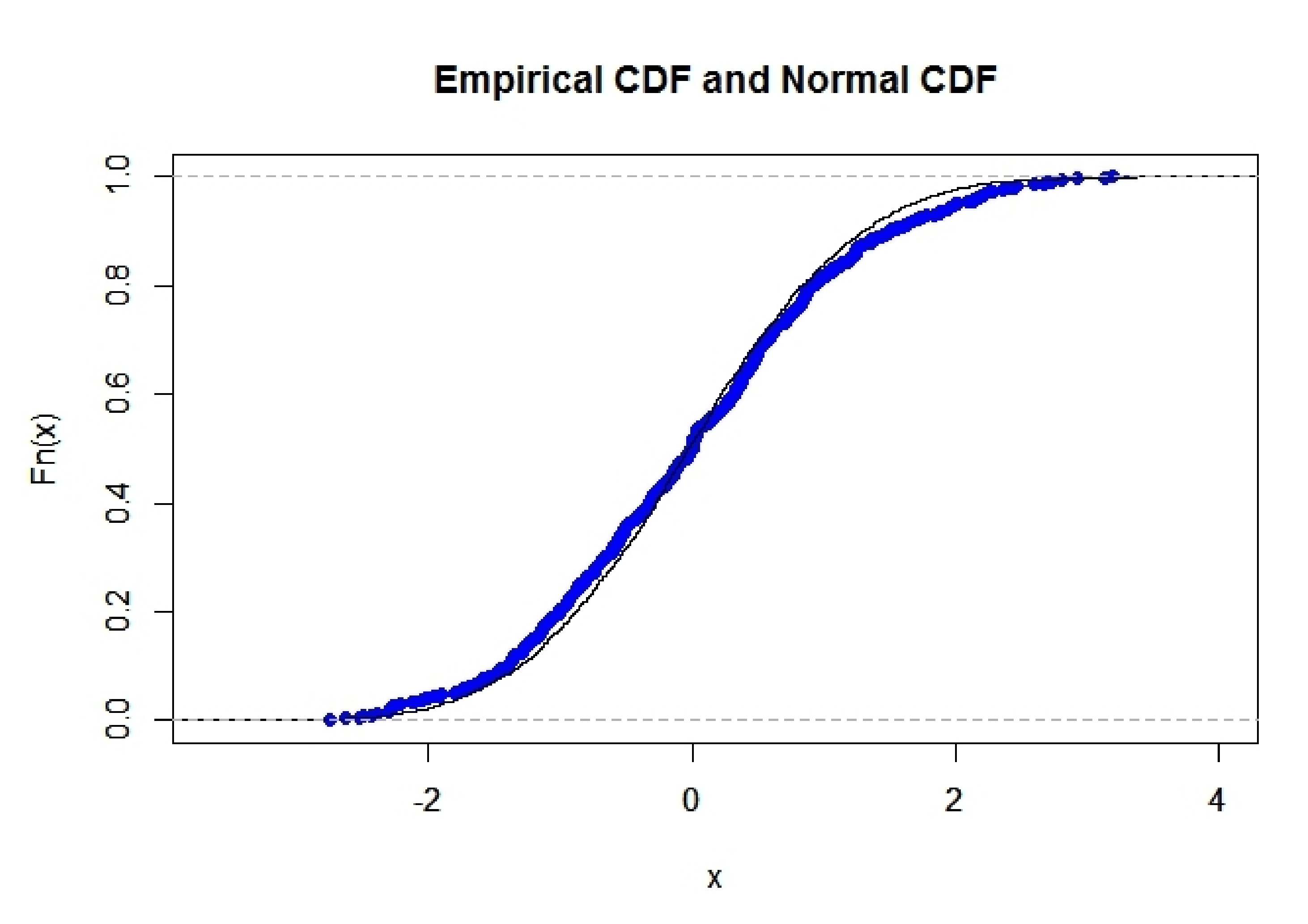} \label{fig:cdf}
\end{figure}
The Kolmogorov distance between the two laws, which equals the sup norm of
the difference of these cumulative distribution functions, computes to
approx. $0.052$. On the other hand, since (See for example Theorem 3.3 of
\cite{CGS} for a proof)
\begin{equation}
d_{Kol}(\phi (n,a_{1}),\mathcal{N}(0,1))\leqslant 2\sqrt{d_{W}(\phi
(n,a_{1}),\mathcal{N}(0,1))},  \label{dKoldW}
\end{equation}%
the distance on the left-hand side should be bounded above by $2\times
3000^{-1/4}=0.27$ approx times any constant coming from the upper bound in
Proposition \ref{GaussianEst}. This is five times larger than our estimate
of the actual Kolmogorov distance $0.052$, a reassuring practical
confirmation of Proposition \ref{GaussianEst}, and of our underlying results
on normal asymptotics of 2nd-chaos AR(1) quadratic variations. If that
proposition's upper bound with its rate $n^{-1/2}$ applied directly to the
Kolomogorov distance, as is known to be the case for the Berry-Esséen
theorem in the classical CLT, the value $0.052$ should be compared to $%
3000^{-1/2}=0.018$ approx., which is arguably in the same order of
magnitude. This is a motivation to investigate whether the so-called delta
method which we used here to prove Proposition \ref{GaussianEst} under the
Wasserstein distance, could also apply to the total variation distance,
since it is known to be an upper bound on the Kolmogorov distance without
the need for the square root as in the comparison (\ref{dKoldW}).

\bigskip

\end{document}